\newcommand{\indep}[4]{#1\underset{#2}{\downfree^{#4}}#3}
\newcommand{\notIndep}[4]{#1\underset{#2}{{\not\downfree}^{#4}}#3}
\def\mod{\textup{ mod }}
\theoremstyle{plain}
\newtheorem{theorem}{Theorem}[section]
\newtheorem{proposition}[theorem]{Proposition}
\newtheorem{fact}[theorem]{Fact}
\newtheorem{lemma}[theorem]{Lemma}
\newtheorem{corollary}[theorem]{Corollary}
\theoremstyle{remark}
\newtheorem{remark}[theorem]{Remark}
\theoremstyle{definition}
\newtheorem{definition}[theorem]{Definition}
\newcommand{\tp}{\textup{tp}}
\newcommand{\acl}{\textup{acl}}
\newcommand{\dcl}{\textup{dcl}}
\newcommand{\RV}{\textup{RV}}
\renewcommand{\subset}{\subseteq}
\renewcommand{\supset}{\supseteq}
\renewcommand{\epsilon}{\varepsilon}
\begin{document}
\author{Akash HOSSAIN}\thanks{The author was partially supported by GeoMod AAPG2019 (ANR-DFG), Geometric and Combinatorial Configurations in Model Theory.}\address{Département de mathématiques d'Orsay, Université Paris-Saclay, France}\email{akash.hossain@universite-paris-saclay.fr}
\title{Extension bases in Henselian valued fields}
\date{}
\maketitle

\begin{abstract}
We study the behaviour of forking in valued fields, and we give several sufficient conditions for parameter sets in a Henselian valued field of residue characteristic zero to be an extension base. Notably, we consider arbitrary (potentially imaginary) bases, whereas previous related results in the literature only focus on maximally complete sets of parameters. This enables us in particular to show that forking coincides with dividing in (the imaginary expansions of) the ultraproducts of the $p$-adic fields.
\end{abstract}


\section{Introduction}
The elementary class of Henselian valued fields of residue characteristic zero has been an important subject of study in model theory since the fundamental results of Ax-Kochen and Ershov. As any field of characteristic zero can be the residue field of a structure from this class, one can note that the complete theories of these valued fields can have an arbitrarily “bad" complexity with respect to the standard dividing lines of pure model theory (for instance, one can look at the trivially-valued field $\mathbb{Q}$).
\par Forking and dividing are abstract independence notions from pure model theory that have deep interactions with those dividing lines. The independence theorems for stable and simple theories \cite{kimPillay} are perhaps the best examples of such interactions. Of course, we cannot expect these notions to behave very well in any theory outside of the simple world. For example, one very basic property expected of forking independence fails in general: a tuple $c$ might not be independent from $A$ over $A$. However, one common pattern that has emerged from work on forking in wider classes like NIP or NTP$_2$ is that, over bases avoiding this pathological behaviour (called extension bases), some results from stability and simplicity theory can be pushed forward (as in \cite{hrushovskiPillay}, \cite{KapCherNTP2}, \cite{yaacov_chernikov_2014}, for instance). One crucial missing ingredient to be able to use the forking machinery in unstable theories is therefore to identify extension bases.
\par To this day, as far as we know, the results from the literature about forking in Henselian valued fields only hold in the context where the base is a maximally-complete field, or even a maximally-complete model (see for instance \cite{HHM}, chapter 13). This paper is an attempt to begin a more elaborate study of this subject, with no assumptions on the base.
\par Forking in general is not easy to understand in valued fields, especially in immediate extensions (thus the maximal completeness hypothesis to get rid of this case), but the good understanding that we have on unary types in Henselian valued fields allows us to have some control over forking for unary types. By a standard induction argument using left-transitivity (see Proposition \ref{reducK2}), our study of forking in dimension one translates to results on extension bases in any dimension. Recall that a parameter set $A$ is an extension base when we have $\indep{c}{A}{A}{f}$ for every finite tuple $c$ (of some $|A|^+$-saturated model containing $A$). Let us state the main results of this paper.

\paragraph{\textit{Necessary hypothesis}} Let $K$ be a valued field with respective value group and residue field $\Gamma$, $k$. A necessary condition for every set in $K$ to be an extension base is that it is already the case in $\Gamma$, $k$, thus we assume that every set is an extension base in the respectives theories of the pure ordered group $\Gamma$, and the pure field $k$. We also need to assume that $[k^*:(k^*)^N]$ is finite for every $N>0$, an hypothesis that also appears in \cite{ealy2022residue} and \cite{Chernikov2016HENSELIANVF}. This assumption is strictly weaker than the Galois group of $K$ being bounded, which it is a very common condition in the model theory of fields.

Our main theorem is as follows:

%
%

\begin{theorem}\label{introMain2}
Let $K$ be a Henselian valued field of residue characteristic zero satisfying the assumptions of the above paragraph. If nonforking coincides with field-theoretic algebraic independence in the theory of the pure field $k$, then any subset of $K$ is an extension base.
\par Moreover, if $\Gamma\equiv \mathbb{Z}$, then any subset of the imaginary expansion $K^{eq}$ is an extension base.
\end{theorem}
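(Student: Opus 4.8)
The plan is to reduce to unary types, dichotomize them into a ``residual'' and an ``immediate'' case, transfer the former to the residue field and value group, and settle the latter using residue characteristic zero. By Proposition~\ref{reducK2} and left-transitivity of nonforking it suffices to show that every $1$-type over an arbitrary base does not fork over its domain: fixing a monster model $\mathcal U$, a parameter set $A=\acl^{eq}(A)$ and an element $c\in\mathcal U$, we must find a global extension of $\tp(c/A)$ that does not fork over $A$. We may assume $c\notin\acl(A)$, and we distinguish cases according to whether $\RV(\acl(A)\langle c\rangle)=\RV(\acl(A))$.

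Suppose first the $\RV$-sort grows, say the leading term $\operatorname{rv}(c-a)$ is new for some $a\in\acl(A)$. The classification of unary types in Henselian valued fields of residue characteristic zero then says that $\tp(c/A)$ is controlled by $\RV$-data over $A$, in the sense that it is determined by the type over $\RV(\acl(A))$ of the relevant leading terms $\operatorname{rv}(c-a)$, and conversely that this $\RV$-type can be lifted back to $\tp(c/A)$ by Hensel's lemma. It thus suffices to produce a global nonforking extension of that $\RV$-type, which we build by pushing through the exact sequence $1\to k^{*}\to\RV\to\Gamma\to 0$: first extend the image in $\Gamma$ to a global nonforking type, using that every set is an extension base in $\Gamma$; then extend along the $k^{*}$-fibre, using that every set is an extension base in $k$ and that there nonforking is field-theoretic algebraic independence. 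Reassembling these two pieces into one nonforking global type on $\RV$, and lifting it, is exactly where the hypothesis that $[k^{*}:(k^{*})^{N}]$ is finite for all $N$ enters: it rules out exotic definable subgroups of $\RV$ through which forking might propagate, and supplies the $N$-divisibility needed to lift.

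Suppose instead that $c$ generates an immediate extension of $\acl(A)$. Then $c$ is a pseudo-limit of a pseudo-Cauchy sequence $(a_i)$ from $\acl(A)$ with no pseudo-limit in $\acl(A)$, and residue characteristic zero is decisive here: by Kaplansky's theory the transcendental immediate type is rigidly determined, and one expects $\tp(c/\acl(A))$ to be in fact the limit in $S_1(\acl(A))$ of the algebraic types $\tp(a_i/\acl(A))$, so that every formula it contains holds of cofinally many $a_i\in\acl(A)$. Granting this, $\tp(c/\acl(A))$ is finitely satisfiable in $\acl(A)$, so it admits a global coheir over $\acl(A)$; a global coheir does not fork over its domain, and as forking over a set coincides with forking over its algebraic closure, $\tp(c/A)$ does not fork over $A$. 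Together with the trivial case $c\in\acl(A)$, this proves the first assertion.

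For the moreover part one reduces types over $K^{eq}$ to types over $K$: by elimination of imaginaries down to the geometric sorts, any imaginary base is interalgebraic with a real tuple together with points of $\Gamma^{eq}$, $\RV^{eq}$ and the lattice sorts, and when $\Gamma\equiv\mathbb Z$ each of these is tame enough for the two cases above to go through over imaginary parameters --- $\mathbb Z$-groups eliminate imaginaries, the lattice sorts reduce to $\GL_n$-homogeneous spaces over $k$, and $\RV^{eq}$ is governed by $k^{eq}$ and $\Gamma$. I expect the main obstacle to be precisely the immediate case and the $\RV$-analysis around it: this is the configuration that the customary maximal-completeness hypothesis is designed to eliminate, and making both the ``limit type is finitely satisfiable'' step and the gluing-and-lifting through $\RV$ work over a base that is neither a model nor maximally complete --- and, in the last part, not even real --- is where the substance of the argument lies.
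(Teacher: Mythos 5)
The opening move (reduce to unary forking over $\acl^{eq}(A)$ via Proposition~\ref{reducK2}) matches the paper, but beyond that your plan diverges from the paper's proof and, more importantly, does not close the gaps it identifies.

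On the case split: the paper sorts the chain $\mathfrak{B}$ of $A$-balls containing $c$ into \emph{residual}, \emph{ramified}, and \emph{immediate}, whereas you dichotomize according to whether the $\RV$-sort grows. These do not cleanly line up: there is a genuine edge case (arising when $k$ fails to be ``strongly stably embedded'', see Section~\ref{sectStPl} and Proposition~\ref{fortVSFaible}) in which $\cap\mathfrak{B}$ is a non-pointed open $\acl^{eq}(A)$-ball $Y$. This configuration sits on the boundary of both your buckets: the residue $res((c-a)/b')$ lies in $k(\acl^{eq}(A))$ but not in $res(K(\acl^{eq}(A)))$, so the type contains $A$-definable conditions with no realization in $K(\acl^{eq}(A))$. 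That is fatal for your immediate-case strategy: the formula ``$x\in Y$'' (or its $A$-conjugate disjunction) already shows $\tp(c/\acl^{eq}(A))$ is \emph{not} finitely satisfiable in $K(\acl^{eq}(A))$, so the global coheir you appeal to does not exist. The paper deals with this through Propositions~\ref{fortVSFaible}, \ref{relevementCentre}, \ref{relevementCentre2} and an explicit side argument in the proof of Theorem~\ref{main1}, and it does \emph{not} use finite satisfiability in the immediate case at all — it constructs a generic $c'$ with $\indep{val(c'-b)}{\Gamma(A)}{\Gamma(B)}{f}$ (Lemma~\ref{immediat}) and feeds it into Lemma~\ref{indepGamma}.

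Your ``$\RV$ grows'' case is a fair outline of the shape of the argument, but the ``reassembling and lifting through $1\to k^*\to\RV\to\Gamma\to 0$'' sentence is precisely where the technical content of the paper lies. Making it precise requires Lemma~\ref{zetaRV} (decomposing definable subsets of $\RV$ into $\Gamma$-fibres plus cosets of $(k^*)^N$), Lemma~\ref{fibresFinies} (transferring dividing across a finite-to-one map), Proposition~\ref{zetaBall} (reducing unary dividing to a single $\RV$-level set), and Lemma~\ref{divisionBase} (moving the extra parameter $b$ into the base). None of these steps is standard, and your proposal does not supply or even name substitutes for them.

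Finally, your route for the ``moreover'' part — elimination of imaginaries to geometric sorts, then treating each sort separately — is not what the paper does and introduces a dependence that the paper avoids: the paper instead proves a lemma about unary types over arbitrary imaginary parameters when $\Gamma\equiv\mathbb{Z}$ (Proposition~\ref{zetaBall2}, imported from~\cite{zeta}) and runs the same trichotomy directly (Theorem~\ref{main3}). As you yourself note, adapting the coheir idea to imaginary bases is unresolved in your proposal, so the moreover part is not proved.

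In short: the reduction is right, and the coheir idea for the honest immediate case over a field base is an appealing shortcut worth pursuing, but as written the proposal is a plan with the hard parts left as conjectures, the finite-satisfiability step is false in an edge case the paper must (and does) handle explicitly, and the imaginary case is not addressed.
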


\begin{corollary}
Forking (with arbitrary imaginary parameters) coincides with dividing, and Lascar strong types coincide with Kim-Pillay strong types, in the theory of any non-principal ultraproduct of the $\mathbb{Q}_p$.
\end{corollary}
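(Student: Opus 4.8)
The plan is to derive the corollary from Theorem~\ref{introMain2} applied to a non-principal ultraproduct $K:=\prod_{\mathcal U}\mathbb Q_p$, and then to invoke the good behaviour of forking over extension bases in NTP$_2$ theories. First I would record the relevant valued-field data of $K$, all immediate from {\L}o\'{s}'s theorem: Henselianity is an axiom scheme, so $K$ is Henselian; its residue field is $k=\prod_{\mathcal U}\mathbb F_p$, which has characteristic $0$ (a fixed prime $q$ is invertible in $\mathbb F_p$ for all $p\neq q$), so $K$ has residue characteristic zero; and its value group is $\Gamma=\prod_{\mathcal U}\mathbb Z$, a $\mathbb Z$-group, so $\Gamma\equiv\mathbb Z$.

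Next I would check the substantive hypotheses of the ``necessary hypothesis'' paragraph for $K$. Every set is an extension base in $\textup{Th}(\Gamma)=$ Presburger arithmetic (standard) and in $\textup{Th}(k)$, since $k$ is pseudo-finite, hence simple, and in a simple theory nonforking extensions always exist. The index $[k^*:(k^*)^N]$ is finite for all $N$ because $[\mathbb F_p^*:(\mathbb F_p^*)^N]=\gcd(N,p-1)\le N$ uniformly in $p$. Finally, nonforking in $\textup{Th}(k)$ coincides with field-theoretic algebraic independence: this is the classical description of forking in pseudo-finite fields (Chatzidakis), using that there model-theoretic $\acl$ is relative field-theoretic algebraic closure and that the absolute Galois group is $\hat{\mathbb Z}$. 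With these verifications in hand, Theorem~\ref{introMain2} (including its ``moreover'' clause, applicable since $\Gamma\equiv\mathbb Z$) gives that every subset of $K^{eq}$, and hence every parameter set in a monster model of $T^{eq}$ where $T=\textup{Th}(K)$, is an extension base.

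It then remains to feed this into the NTP$_2$ machinery. The theory $T$ is NTP$_2$: its residue field is simple, its value group is NIP, and a Henselian valued field of residue characteristic zero whose residue field and value group are NTP$_2$ is itself NTP$_2$ by the corresponding Ax--Kochen--Ershov transfer (see \cite{Chernikov2016HENSELIANVF} and the references therein); and $T^{eq}$ is NTP$_2$ since naming imaginaries preserves NTP$_2$. By \cite{KapCherNTP2}, forking and dividing agree over every extension base in an NTP$_2$ theory, so by the previous paragraph they agree over an arbitrary, possibly imaginary, parameter set. Likewise, by the independence theorem for NTP$_2$ theories \cite{yaacov_chernikov_2014}, which also yields $G$-compactness over extension bases, i.e. the coincidence there of the Lascar and Kim--Pillay strong-type relations, Lascar strong types and Kim--Pillay strong types agree over every extension base, hence over every parameter set. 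This establishes both assertions of the corollary.

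Since Theorem~\ref{introMain2} carries the weight of the argument, the only genuine obstacles are the two imported ingredients: that $T$ (equivalently $T^{eq}$) is really NTP$_2$, which needs the NTP$_2$ transfer principle for Henselian valued fields in the case where the residue field is merely simple rather than NIP; and that the hypothesis of Theorem~\ref{introMain2} on the residue field is met, i.e. that forking in pseudo-finite fields is exactly field-theoretic algebraic independence and not algebraic independence perturbed by a Galois-theoretic condition, the latter being harmless here precisely because the absolute Galois group of a pseudo-finite field is procyclic.
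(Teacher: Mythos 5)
Your proof follows exactly the same route as the paper's: it applies Theorem~\ref{introMain2} (including its ``moreover'' clause, since $\Gamma\equiv\mathbb{Z}$) to obtain that all subsets of $K^{eq}$ are extension bases, and then feeds this into \cite{KapCherNTP2}, Theorem~1.2 and \cite{yaacov_chernikov_2014}, Corollary~3.6 after noting that the theory is NTP$_2$. The only difference is one of thoroughness: the paper's proof is very terse (it does not explicitly verify the hypotheses of Theorem~\ref{introMain2} for an ultraproduct of the $\mathbb{Q}_p$), whereas you carry out the verification in detail — Henselianity, residue characteristic zero, $\Gamma\equiv\mathbb{Z}$ by \L o\'s, extension bases in Presburger arithmetic and in pseudo-finite fields, the uniform bound $[\mathbb{F}_p^*:(\mathbb{F}_p^*)^N]\le N$, and the characterization of forking in pseudo-finite fields as field-theoretic algebraic independence (Chatzidakis). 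These checks are correct and are the implicit content the paper takes for granted. Your remark at the end correctly identifies the two places where the argument relies on externally cited facts (the NTP$_2$ transfer and the pseudo-finite forking characterization), both of which are standard.
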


\begin{proof}
It is a well-known fact that any such theory has NTP$_2$. In particular, by (\cite{KapCherNTP2}, Theorem 1.2) and (\cite{yaacov_chernikov_2014}, Corollary 3.6), forking coincides with dividing and Lascar strong types coincide with Kim-Pillay strong types over extension bases.
\end{proof}

Note that by (\cite{CHERNIKOV2014695}, Theorem 7.6), the above corollary also applies to any field of Laurent series over any PAC bounded field of characteristic $0$.
\par We also prove in this paper more technical results which extend the main theorem to a much weaker context, at the cost of having to name countably many constants. See Theorems \ref{main1}, \ref{main1Bis} for precise statements.

\par Let us explain the outline of the paper.
\par Section \ref{prelims} contains various well-known results about Henselian valued fields, their model theory, and their geometry. We also introduce an ad-hoc notion of genericity and weak-genericity for chains of balls.
\par Let us explain the content of section \ref{sectStPl}. In a Henselian valued field of residue characteristic zero $K$ with residue field $k$, one can show that any $K$-definable subset $X$ of $k$ is $k$-definable in the pure field structure of $k$. We say in this case that $k$ is \textit{stably-embedded}. However, if $X$ is $A$-definable with $A$ a subfield of $K$, $X$ might not be $res(K(\acl(A)))$-definable. This stronger property, which we will call \textit{strong stable embeddedness}, does not always hold, and we will give some sufficient conditions for it to hold (see Lemma \ref{stPlsyntaxique}). This result is interesting on its own, and it could have other applications than the study of extension bases. We will also give an example of a Henselian valued field of residue characteristic zero where the residue field is not strongly stably embedded (in this case we will talk about \textit{weak stable embeddedness}). We prove the results of this section by playing with some automorphisms of Hahn fields that are elementary extensions of our model. A more conventional way to carry out these proofs would be to use results on the model theory of expansions of short exact sequences of Abelian groups (see \cite{suitesExactes}, subsection 4.1). We chose to take a different approach, and provide a proof that is more self-contained, and arguably uses less technical tools.
\par In section \ref{sectForking}, we give the basic definitions of forking, dividing and extension bases, as well as some properties that are needed for our work.
\par In section \ref{sectMain}, we start by proving key lemmas on unary forking in Henselian valued fields, and then we prove the main theorems.


\section{Preliminaries on Henselian valued fields}\label{prelims}

\subsection{Notations}
Let $HF_{0, 0}$ be the theory of Henselian valued fields of residue characteristic zero. If $\mathcal{M}$ is a model of this theory, then we shall write $K(\mathcal{M})$ (resp. $\Gamma(\mathcal{M})$, $k(\mathcal{M})$) for the domain of the valuation of $\mathcal{M}$ (resp. its value group, its residue field). We will write $\mathcal{O}_\mathcal{M}$ for the valuation ring of $\mathcal{M}$, and $\mathfrak{M}_\mathcal{M}$ for its maximal ideal. Let $\RV(\mathcal{M})$ be the Abelian group defined as $\faktor{K(\mathcal{M})^*}{1+\mathfrak{M}_\mathcal{M}}$. When there is no ambiguity, we may omit to specify the model and simply write $K$, $\Gamma$, $k$, $\mathcal{O}$, $\mathfrak{M}$, $\RV$. We may also interpret them as $\emptyset$-definable set in the theory of $\mathcal{M}^{eq}$. The Abelian group $\Gamma$ will always be written additively. We will extend the valuation map to a map of domain $K$ by adding an element $\infty$ to $\Gamma$ for the image of zero. Similarly, we shall also add $\infty$ to $\RV$ to extend the domain of the map $K^*\longrightarrow\RV$ to $K$. Let us write $\RV_\infty$ for the monoïd $\RV, \infty$, and define $\Gamma_\infty$ similarly. The canonical maps $K\longrightarrow\Gamma_\infty$ and $\RV_\infty\longrightarrow\Gamma_\infty$ will both be called $val$. As for the maps $K\longrightarrow k$ and $K\longrightarrow\RV_\infty$, we will respectively call them $res$ and $rv$. Let us define on $\RV_\infty$ the following $\emptyset$-definable ternary predicate:
$$\oplus (x, y, z)\ :\ \exists x', y'\in K\ \left(rv(x')=x\wedge rv(y')=y\wedge rv(x'+y')=z\right)$$
\par We have a short exact sequence of Abelian groups given by:
$$
1\longrightarrow \mathcal{O}^*\longrightarrow K^*\longrightarrow\Gamma\longrightarrow 0
$$
Whenever this sequence splits, we will write $ac$ (resp. $s$) for the induced group homomorphism $K^*\longrightarrow\mathcal{O}^*\longrightarrow k^*$ (resp. $\Gamma\longrightarrow K^*$). These maps do not always exist in a valued field. Even if they do, they are not necessarily definable, and in general they do not restrict (resp. extend) to maps that make the sequence split in an elementary substructure (resp. extension). They are not part of the structures and the languages we consider, unless we explicitly say so. If case these maps exist, the expansion of the valued field to $s$ and $ac$ shall be called a Pas field.
\par We will write $k'((t^G))$ for the valued field of all Hahn series with coefficients in the field $k'$, and powers in the ordered Abelian group $G$. This valued field is known to be Henselian. The above sequence splits in $k'((t^G))$, with $s(\gamma)=t^\gamma$, and $ac(S)$ the coefficient of least value in $S$.
\par In general, if $S$ is a (potentially imaginary) sort (like $K$, $\Gamma$, $\RV$), and $A$ is a parameter subset of $\mathcal{M}^{eq}$, we shall write $S(A)$ for the parameter set $S(\mathcal{M})\cap A$. The non-forking (resp. non-dividing) independence relation will be denoted $\indep{}{}{}{f}$ (resp. $\indep{}{}{}{d}$), whereas the field-theoretic algebraic independence shall be written $\indep{}{}{}{alg}$. The model-theoretic algebraic closure of a parameter set $A$ will be denoted $\acl (A)$, whereas the field-theoretic algebraic closure of a field $F$ will be written $F^{alg}$. The divisible closure of any torsion-free Abelian group $G$ will be denoted $\mathbb{Q}\otimes G$.

\begin{remark}\label{precisionNotations}
If $\mathcal{M}$ is a valued field, and $A\subset \mathcal{M}^{eq}$, there might exist an element of $k(A)$ (ie an element of $k(\mathcal{M})\cap A$, remember that $k$ is the residue field sort, ie an $\emptyset$-definable subset of $\mathcal{M}^{eq}$) that cannot be pulled back to an element of $K(A)$. In other words, $k(A)$ and $res(K(A))$ might not coincide, these two notations are not aliases.
\par The notations $k$, $K$, $\Gamma$ refer to definable sets. When we deal with an actual field or group, we will usually call them $k'$, $G$ as in the above definition of Hahn fields.
\end{remark}

\begin{remark}\label{extensionAC}
Let $\mathcal{M}$ be a Pas field, and $A\leqslant B$ subfields of $K(\mathcal{M})$. If $ac(A)=res(A)$, and $val(A)=val(B)$, then it is easy to show that $ac(B)=res(B)$:
\par Let $b\in B^*$. Let $a\in A$ so that $val(a)=val(b)$. By hypothesis, we can find $a'\in \mathcal{O}_A$ so that $res(a')=ac(a)$. Then we have $ac(b)=res\left(\frac{b}{a}\cdot a'\right)\in res(B)$.
\end{remark}

\subsection{Model-theoretic facts}

\begin{proposition}[\cite{flenner}, Propositions 3.6, 4.3 and 5.1]  \label{flenner}
Let $\mathcal{M}\models HF_{0, 0}$, and $A\subset\mathcal{M}$ so that $A=K(A)^{alg}\cap K(\mathcal{M})$. Let $n, m<\omega$, and $X$ an $A$-definable subset of $K^n\times\RV_\infty^m$. Then $X$ is definable by a formula using only parameters from $K(A)$, the language of the field $K$ without equality, the language of the monoïd $\RV_\infty$ (with equality), the map $rv$, and the relation $\oplus$, without any quantifiers in $K$. Moreover, if $n=1$ and $m=0$, then we can assume that the $K$-variable $x$ in the formula only occurs in terms of the form $rv(x-a)$, with $a\in K(A)$.
\end{proposition}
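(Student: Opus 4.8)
The plan is to read this as the relative quantifier elimination theorem for Henselian valued fields of residue characteristic $0$ in the leading-term formalism, to deduce the first assertion directly from the cited results of \cite{flenner} (after the routine check that their conventions match ours), and to isolate the \emph{moreover} clause as the point where something beyond bare relative elimination is needed. First I would recall the mechanism behind \cite{flenner}: modulo translation of formalisms, an embedding between two ``good'' substructures of models of $HF_{0, 0}$---ones whose $K$-part is a valued subfield, equipped with the induced $\RV_\infty$-structure---that is elementary on the $\RV_\infty$-sort is already a partial elementary map. This is proved by a back-and-forth extending a partial isomorphism $f$ across a single $c\in K$: one separates the case where $c$ only enlarges $\RV_\infty$ (controlled by the hypothesis on $\RV_\infty$ together with an Ax-Kochen-Ershov argument), the case where $c$ is residually and value-theoretically generic over the domain (handled directly), and the case where $c$ lies in an immediate extension (handled with pseudo-Cauchy sequences and the Kaplansky uniqueness theorems). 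Residue characteristic $0$ keeps the Hensel-lemma and Newton-polygon machinery clean throughout, and equality on $K$ is then discarded because $x=y$ is equivalent to $rv(x-y)=\infty$.

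Second I would use the hypothesis $A=K(A)^{alg}\cap K(\mathcal M)$, which says that $K(A)$ is a relatively algebraically closed subfield of $K(\mathcal M)$. A simple-root application of Hensel's lemma in $K(\mathcal M)$ shows that $K(A)$ is then itself Henselian, and since $A\subset K(\mathcal M)$ the substructure generated by $A$ has $K$-part $K(A)$ (with $rv(K(A)^*)\cup\{\infty\}$ and $val(K(A)^*)$ on the other sorts). Hence one may run the relative elimination with base exactly $K(A)$, with no need to pass to an algebraic extension, so that an $A$-definable $X\subset K^n\times\RV_\infty^m$ becomes equivalent to a formula with no $K$-quantifiers, in the prescribed language, with parameters in $K(A)$; removing $K$-equality as above gives the first assertion.

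Third comes the \emph{moreover} clause. For $n=1$, $m=0$ we then have a formula in which $x\in K$ occurs only inside subterms $rv(P_j(x))$ with $P_j\in K(A)[x]$. Factoring $P_j=c\prod_\ell(x-\alpha_{j,\ell})$ over $K(A)^{alg}$, relative algebraic closedness forces every root of $P_j$ lying in $K(\mathcal M)$ to lie in $K(A)$, so the zero set of $P_j$ in $K(\mathcal M)$ is a finite subset of $K(A)$, cut out by finitely many conditions $rv(x-a)=\infty$, and off it multiplicativity of $rv$ gives $rv(P_j(x))=rv(c)\prod_\ell rv(x-\alpha_{j,\ell})$. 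It thus suffices, for each $\alpha\in K(A)^{alg}$, to express the map $x\mapsto rv(x-\alpha)$ on $K(\mathcal M)$ as a fixed $\RV_\infty$-term in finitely many $rv(x-a_i)$, $a_i\in K(A)$, on each piece of a finite partition of $K$ by conditions of the allowed form. If $\alpha\in K(A)$ there is nothing to do; if $\alpha\notin K(\mathcal M)$, then by Krasner's lemma $val(x-\alpha)$ is bounded above uniformly in $x\in K(\mathcal M)$ by a quantity attached to the minimal polynomial of $\alpha$, and a bounded Newton-polygon analysis produces finitely many pivots $a_0,\dots,a_r\in K(A)$---which remain in $K(A)$ precisely because $K(A)$ is Henselian and relatively algebraically closed---such that $val(x-\alpha)$, hence $rv(x-\alpha)$, is determined on each piece by $\big(rv(x-a_0),\dots,rv(x-a_r)\big)$ and fixed data from $rv\big(\acl(A)^*\big)$. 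In every case the $x$-dependence collapses to finitely many linear terms $x-a_i$ with $a_i\in K(A)$, as required.

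The step I expect to be the real obstacle is this last one: controlling $rv(x-\alpha)$ for roots $\alpha\in K(A)^{alg}\setminus K(\mathcal M)$ that admit no best approximation in $K(A)$, and showing that only finitely many pivots $a_i\in K(A)$ suffice. This is where the refinement genuinely goes beyond relative quantifier elimination, and where residue characteristic $0$ is used essentially---through Krasner's lemma, the finiteness of Newton polygons, and the good behaviour of pseudo-Cauchy sequences over Henselian fields.
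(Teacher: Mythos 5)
Your first part is fine in substance—it is Flenner's relative quantifier elimination, and dropping $K$-equality via $rv(x-y)=\infty$ is exactly the right move—though you over-engineer: the paper points out that Flenner's Proposition~4.3 already gives the first assertion for \emph{any} $A\subset K(\mathcal M)$, with no use of the hypothesis $A=K(A)^{alg}\cap K(\mathcal M)$ at all. That hypothesis only enters in the \emph{moreover} clause.

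For the \emph{moreover} clause you take a genuinely different route from the paper, and I think the route has a real gap that you have half-noticed yourself. You factor each $P_j$ over $K(A)^{alg}$ and then try to control $rv(x-\alpha)$ for roots $\alpha\in K(A)^{alg}\setminus K(\mathcal M)$ via Krasner plus a ``bounded Newton-polygon analysis producing finitely many pivots $a_i\in K(A)$.'' Two problems. First, $rv(x-\alpha)$ is an element of $\RV$ of a proper algebraic extension of $K(\mathcal M)$, not of $\RV(\mathcal M)$; the identity $rv(P_j(x))=rv(c)\prod_\ell rv(x-\alpha_{j,\ell})$ lives upstairs, and turning it into a formula over $\RV_\infty(\mathcal M)$ in terms of $rv(x-a_i)$, $a_i\in K(\mathcal M)$, is precisely the hard content, not a routine reduction. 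Second, the asserted Newton-polygon step is exactly where Flenner's Proposition~3.6 does its work, and the crucial device there is that the centers are taken to be roots of \emph{iterated derivatives} of $P_j$ that lie \emph{in $K(\mathcal M)$} (one eventually hits a linear derivative, so roots exist; the Swiss-cheese refinement is organized around these). Your sketch never introduces derivatives and instead works with the genuine roots of $P_j$, which may not be in $K(\mathcal M)$ and which you then have to ``approximate from inside $K(A)$''---that is the obstacle you flag, and I do not see how to close it without essentially rediscovering the iterated-derivative trick.

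The paper's argument is both shorter and side-steps this entirely: quote Flenner's Proposition~5.1 verbatim, note that the parameters $\alpha_i$ it introduces are, by the statement of Proposition~3.6, roots \emph{in $K(\mathcal M)$} of iterated derivatives of polynomials with coefficients in $A$; since such derivatives again have coefficients in $A$ (characteristic $0$), their roots in $K(\mathcal M)$ lie in $A^{alg}\cap K(\mathcal M)=K(A)$. That one observation is the entire proof of the \emph{moreover} clause, and it is exactly the observation your proposal is missing.
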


\begin{proof}
Proposition 4.3 of the paper by Flenner is exactly the first part of the proposition without our hypothesis $A=A^{alg}\cap K(\mathcal{M})$.
\par In the proof of Proposition 5.1 of the same paper, the $\alpha_i$ are given by applying Proposition 3.6 to polynomials with coefficient in $A$. The statement of this Proposition 3.6 shows that the $\alpha_i$ can be chosen as roots in $K(\mathcal{M})$ of some iterated derivatives of these polynomials, so they belong to $A^{alg}\cap K(\mathcal{M})$, which concludes the proof.
\end{proof}

\begin{corollary}
Let $\mathcal{M}\models HF_{0, 0}$, and $A\subset K(\mathcal{M})$. Then $K(\acl(A))=A^{alg}\cap K(\mathcal{M})$.
\end{corollary}

\begin{proof}
Let $X$ be a finite $A$-definable subset of $K$. The proposition gives us a formula defining $X$ with parameters in $A^{alg}\cap K(\mathcal{M})$, which satisfies the second part of the proposition. Let $(a_i)_i$ be the finite family of points from $A^{alg}\cap K(\mathcal{M})$ that appear in this formula. Suppose towards contradiction that $X$ has a point $a$ distinct from all the $a_i$. Let $\gamma\in\Gamma(\mathcal{M})$ be strictly larger than all the $val(a-a_i)$. The set
$$
Y=\left\lbrace b\in K(\mathcal{M})|val(a-b)\geqslant\gamma\right\rbrace
$$
is infinite, but all its elements belong to $X$, for if $b\in Y$, then we have $rv(b-a_i)=rv(a-a_i)$ for all $i$. As a result, $X$ is infinite, a contradiction.
\end{proof}

\begin{remark}
If $\mathcal{M}\models HF_{0, 0}$, then the above corollary implies in particular that $K(\acl(\emptyset))\subset\mathbb{Q}^{alg}$, so this field is trivially-valued.
\end{remark}

\begin{lemma}\label{valeursLibres}
Let $\mathcal{M}\models HF_{0, 0}$, and $a_1...a_n, b_1...b_n\in K(\mathcal{M})$. Suppose \\$(val(a_1),...val(a_n))$ is a $\mathbb{Q}$-free family in $\mathbb{Q}\otimes\Gamma(\mathcal{M})$, and $rv(a_i)=rv(b_i)$ for each $i$. Then $a_1...a_n\equiv_{\emptyset}b_1...b_n$.
\end{lemma}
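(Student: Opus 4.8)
The plan is to reduce the statement to an application of Proposition \ref{flenner}, using the fact that $rv(a_i) = rv(b_i)$ together with the $\mathbb{Q}$-freeness of the values to transport the atomic data. First I would set up the target: to show $a_1\dots a_n \equiv_\emptyset b_1\dots b_n$, it suffices to show that every formula $\varphi(x_1,\dots,x_n)$ without parameters satisfied by $\bar a$ is also satisfied by $\bar b$. Since $K(\acl(\emptyset)) = \mathbb{Q}^{alg}\cap K(\mathcal{M})$ is a field with $A = A^{alg}\cap K(\mathcal{M})$ for $A = \emptyset$ (more precisely, we work over the prime field, which is algebraically closed in itself inside $K(\mathcal{M})$ only after taking algebraic closure — so really I would apply Proposition \ref{flenner} with $A = K(\acl(\emptyset))$, which satisfies $A = A^{alg}\cap K(\mathcal{M})$ and is trivially valued, hence contained in the algebraic closure of both tuples). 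By Proposition \ref{flenner}, $\varphi(K^n)$ is defined by a formula $\psi(\bar x)$ using parameters from $K(A) \subseteq \mathbb{Q}^{alg}$, the field language without equality on the $K$-variables, the monoid language on $\RV_\infty$ with the map $rv$ and the relation $\oplus$, and no quantifiers over $K$.

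Next I would analyze such a quantifier-free-in-$K$ formula. After eliminating the field-language terms: every $K$-term built from $\bar x$ and parameters in $A$ is a polynomial $P(\bar x)$ with coefficients in $A$, and the formula only sees these terms through $rv(P(\bar x))$ (together with $\oplus$ and the monoid structure on $\RV$, plus pure $\RV$-parameters, which lie in $\RV(\acl(\emptyset))$ and are the same for both sides). So it is enough to check: for every polynomial $P$ with coefficients in $A = \mathbb{Q}^{alg}\cap K(\mathcal{M})$, we have $rv(P(\bar a)) = rv(P(\bar b))$, and moreover that the $\oplus$-relations among the various $rv(P_j(\bar a))$ match those among the $rv(P_j(\bar b))$. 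The key computation is that $val(P(\bar a)) = val(P(\bar b))$ and $rv(P(\bar a)) = rv(P(\bar b))$: writing $P(\bar x) = \sum_\nu c_\nu \bar x^\nu$, the $\mathbb{Q}$-freeness of $(val(a_1),\dots,val(a_n))$ in $\mathbb{Q}\otimes\Gamma$ — together with the fact that $val(c_\nu) \in val(\mathbb{Q}^{alg}) = \{0,\infty\}$ since the prime field's algebraic closure is trivially valued, so the values $val(c_\nu \bar a^\nu) = \sum_i \nu_i\, val(a_i)$ for distinct multi-indices $\nu$ with $c_\nu \ne 0$ are pairwise distinct — forces a unique monomial of minimal value, and the same monomial is minimal for $\bar b$ since $val(b_i) = val(a_i)$. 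Hence $rv(P(\bar a))$ equals $rv$ of that minimal monomial, which is a product of $rv(c_\nu)$ and powers of $rv(a_i) = rv(b_i)$, using that $rv$ is multiplicative; so $rv(P(\bar a)) = rv(P(\bar b))$. The same argument handles subsums, giving the matching of all $\oplus$-relations.

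The main obstacle I anticipate is the bookkeeping around the $\oplus$ relation and the monoid structure on $\RV_\infty$: one must be careful that $\oplus(rv(P_1(\bar a)), rv(P_2(\bar a)), rv(P_3(\bar a)))$ holds if and only if it holds for $\bar b$. This follows because $\oplus(u,v,w)$ holds iff either two of the relevant values are unequal and $w$ is forced (the $rv$ of the summand of least value), or the values agree and $w$ records the $rv$ of the sum, a condition that by the pigeonhole/unique-minimal-monomial argument above is again controlled entirely by the shared $rv$-data and the shared value data — so the argument in the previous paragraph applied to $P_1, P_2, P_1 + P_2$ closes this. A secondary point to be careful about: parameters from $\RV_\infty$ appearing in $\psi$ lie in $\RV(\acl(\emptyset))$ by the earlier corollary/remark (as $\acl(\emptyset)$ is trivially valued, these reduce to elements of $k(\acl(\emptyset))$ or $\infty$), hence are fixed and contribute identically to both sides. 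Once these are settled, $\bar a \models \psi \Leftrightarrow \bar b \models \psi$, and since $\bar a \models \varphi$ was arbitrary (and $\varphi$ can be negated), we conclude $\bar a \equiv_\emptyset \bar b$.
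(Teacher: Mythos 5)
Your proposal is correct and follows essentially the same route as the paper: apply Proposition~\ref{flenner} (over $A = K(\acl(\emptyset))$), then reduce to showing $rv(P(\bar a)) = rv(P(\bar b))$ for polynomials $P$ with coefficients in the trivially valued field $K(\acl(\emptyset))$, using $\mathbb{Q}$-freeness of the values to single out a unique least-valued monomial and multiplicativity of $rv$ to transfer its residue. The extra care you take over $\oplus$ is unnecessary: $\oplus$ is a fixed ternary relation on $\RV_\infty$, so once you know the elements $rv(P_j(\bar a))$ coincide with the $rv(P_j(\bar b))$, every $\RV_\infty$-statement about them — $\oplus$-atoms, monoid equalities, quantification over $\RV_\infty$, occurrences of $rv$ of parameters — is automatically preserved, with no need to unwind the semantics of $\oplus$ via subsums.
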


\begin{proof}
By \ref{flenner}, we just have to prove $rv(P(a_1...a_n))=rv(P(b_1...b_n))$ for each polynomial $P$ with $\emptyset$-definable coefficients. Let $P$ be such a polynomial. By the above remark, the non-zero coefficients of $P$ all have value $0$. As the $val(a_i)$ form a $\mathbb{Q}$-free family, the values of every monomial involved in $P(a_1...a_n)$ are pairwise-distinct. Let $Q$ be the monomial in $P$ so that $Q(a_1...a_n)$ has least value. Then we have $rv(P(a_1...a_n))=rv(Q(a_1...a_n))$. As $rv(a_i)=rv(b_i)$, we have $val(a_i)=val(b_i)$, so $Q(b_1...b_n)$ is also the monomial of $P(b_1...b_n)$ of least value, and $rv(P(b_1...b_n))=rv(Q(b_1...b_n))$. Now, $rv(Q(a_1...a_n))$ is just the product of powers of the $rv(a_i)$ by a constant. By hypothesis $rv(a_i)=rv(b_i)$, we have $rv(Q(a_1...a_n))=rv(Q(b_1...b_n))$. That concludes the proof.
\end{proof}

\begin{fact}[\cite{Cherlin1976ModelTA}, Corollary 28]\label{existencePasExtension}
If $\mathcal{M}\models HF_{0, 0}$ is $\aleph_1$-saturated, then $s$ and $ac$ exist.
\end{fact}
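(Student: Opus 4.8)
The angular component map is essentially a formal consequence of a cross-section, so the plan is to first split the short exact sequence of abelian groups
\[
1\longrightarrow\mathcal{O}^*\longrightarrow K^*\overset{val}{\longrightarrow}\Gamma\longrightarrow 0 .
\]
Indeed, a group-theoretic section $s\colon\Gamma\to K^*$ of $val$ is exactly a cross-section, and once $s$ is available one checks directly (cf.\ Remark~\ref{extensionAC}) that $ac(x):=res\bigl(x/s(val(x))\bigr)$ is a group homomorphism $K^*\to k^*$ restricting to $res$ on $\mathcal{O}^*$. Now $\mathcal{O}^*$ is a \emph{pure} subgroup of $K^*$: since $\Gamma$ is torsion-free, $x\in K^*$ with $x^n\in\mathcal{O}^*$ forces $n\cdot val(x)=0$, hence $val(x)=0$ and $x\in\mathcal{O}^*$ already, so $nK^*\cap\mathcal{O}^*=n\mathcal{O}^*$ for all $n\geqslant 1$. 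Since a pure-injective pure subgroup is automatically a direct summand, it therefore suffices to show that $\mathcal{O}^*$ is pure-injective (i.e.\ algebraically compact).

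To this end I would decompose $\mathcal{O}^*\cong(1+\mathfrak{M})\oplus k^*$. Here $1+\mathfrak{M}$ is divisible: for $u\in 1+\mathfrak{M}$ and $n\geqslant 1$, the polynomial $X^n-u$ has $1$ as an approximate root, and its derivative $nX^{n-1}$ takes at $1$ the value $n$, which is a unit because the residue characteristic is $0$; Hensel's lemma then yields an $n$-th root of $u$ in $1+\mathfrak{M}$. A divisible abelian group is an injective $\mathbb{Z}$-module, hence a direct summand of $\mathcal{O}^*$, and its complement is carried isomorphically onto $\mathcal{O}^*/(1+\mathfrak{M})=k^*$ by $res$. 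Both summands are pure-injective: $1+\mathfrak{M}$ because it is injective, and $k^*=k(\mathcal{M})^*$ because $k(\mathcal{M})$, being the residue field of the $\aleph_1$-saturated valued field $\mathcal{M}$, is $\aleph_1$-saturated, so its multiplicative group is an $\aleph_1$-saturated abelian group, and $\aleph_1$-saturated modules over a countable ring are well known to be pure-injective. A finite direct sum of pure-injective modules is pure-injective, so $\mathcal{O}^*$ is pure-injective, as required; if $S$ is a complement of $\mathcal{O}^*$ in $K^*$, then $S\cap\mathcal{O}^*=\{1\}$ and $val(S)=\Gamma$, so $s:=(val|_S)^{-1}$ is the wanted cross-section.

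The single non-formal ingredient is the pure-injectivity of $k^*$, and this is exactly where the $\aleph_1$-saturation of $\mathcal{M}$ is genuinely needed: without it the sequence above need not split — for instance there is no reason for a prescribed residue to be an $n$-th power in $k$ when the associated value happens to lie in $n\Gamma$, which is precisely the kind of obstruction a naive Zorn's lemma argument runs into. Everything else — the divisibility of $1+\mathfrak{M}$, the fact that the two splittings assemble into one, and the verification that $val$ restricts to an isomorphism on $S$ — should be routine.
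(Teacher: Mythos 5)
The paper states this as a Fact with a citation to Cherlin's \emph{Model Theoretic Algebra} (Corollary 28) and gives no proof of its own, so there is no in-paper argument to compare against; what you have written is a self-contained reconstruction, and it is correct. The strategy — show that $\mathcal{O}^*$ is a pure subgroup of $K^*$ (immediate from torsion-freeness of $\Gamma$), and that $\mathcal{O}^*$ is pure-injective, so the valuation sequence splits — is the standard algebraic-compactness route, and to my knowledge it is essentially Cherlin's. The decomposition $\mathcal{O}^*\cong(1+\mathfrak{M})\oplus k^*$ and the three individual verifications are all sound: $1+\mathfrak{M}$ is divisible by Hensel's lemma together with residue characteristic $0$ (no saturation needed here), hence injective, hence a direct summand whose complement $res$ carries isomorphically onto $k^*$; and $k^*$ is pure-injective because $\aleph_1$-saturation of $\mathcal{M}$ passes to the interpretable sort $k$ and then to its multiplicative group, and $\aleph_1$-saturated abelian groups are algebraically compact. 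Your final sentence correctly isolates the one place the saturation hypothesis does real work. One very minor presentational point: it is worth saying explicitly that $ac$, defined by $ac(x)=res(x/s(val(x)))$, restricts to $res$ on $\mathcal{O}^*$ and is multiplicative because $s$ is a homomorphism — you gesture at this via Remark~\ref{extensionAC}, but that remark is about extending a given $ac$ along a value-preserving extension, not about constructing $ac$ from $s$, so the citation is slightly off even though the underlying computation is trivial.
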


\begin{proposition}[see for instance \cite{vandenDries2014}, Corollary 5.25]\label{stablementPlonge}
Let $\mathcal{M}\models HF_{0, 0}$, and $A\subset\mathcal{M}$ so that $A=\acl(K(A))$. Then any $A$-definable subset of $\Gamma(\mathcal{M})$ is actually $val(A)$-definable in the ordered group structure $\Gamma(\mathcal{M})$. Moreover, any $A$-definable subset $X$ of $k(\mathcal{M})$ is actually $k(\mathcal{M})$-definable in the ring structure $k(\mathcal{M})$, and, if $ac$ exists in $\mathcal{M}$, then $X$ is $ac(A)$-definable in the ring structure $k(\mathcal{M})$.
\end{proposition}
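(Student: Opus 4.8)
The plan is to derive the parameter-sensitive assertions from their coarse versions, the bulk of the work being the parameter bookkeeping. I would start by recalling the coarse statements: $\Gamma$ is stably embedded with induced structure a reduct of the pure ordered group, and $k$ is stably embedded with induced structure a reduct of the pure field. Apart from being Corollary 5.25 of \cite{vandenDries2014}, these follow from Flenner's relative quantifier elimination: by Proposition \ref{flenner}, a subset of $\RV^m$ definable over a subfield $B$ of $K(\mathcal{M})$ has, over the parameters $rv(B^{alg}\cap K(\mathcal{M}))$, a defining formula that is quantifier-free in the language $\{=,\cdot,\oplus\}$ of $\RV_\infty$, since no $K$-variable occurs; moreover $k$ and $\Gamma$ are $\emptyset$-interpreted in $(\RV_\infty,\cdot,\oplus)$, the group $k^*$ being the $\emptyset$-definable set $\{v:val(v)=0\}$ (one has $val(v)>0\iff\oplus(1,v,1)$), with its addition read off from $\oplus$ restricted to this set, and $\Gamma$ being the quotient of $\RV$ by $k^*$, ordered by the image of the $\emptyset$-definable set $\{v:val(v)\geq 0\}$. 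Finally, the corollary above, applied to the field $K(A)$, shows that $K(A)=K(A)^{alg}\cap K(\mathcal{M})$ is relatively algebraically closed in $K(\mathcal{M})$; I will use this repeatedly.

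For the value group, let $X$ be an $A$-definable subset of $\Gamma(\mathcal{M})$. By stable embeddedness, $X$ is defined in the pure ordered group over parameters from $\Gamma(A)$. Since the induced structure on $\Gamma$ depends on $K(A)$ only through $val(K(A))$, we have $\Gamma(A)=\acl_\Gamma(val(K(A)))$; and in any ordered abelian group $\acl=\dcl$, because a finite definable set is pinned down pointwise by the order, so $\Gamma(A)=\dcl_\Gamma(val(K(A)))$. Hence $X$ is definable in the pure ordered group over parameters from $val(K(A))\subseteq val(A)$, as wanted.

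For the residue field, the key preliminary claim is that $k(A)=res(K(A))$ under the hypothesis $A=\acl(K(A))$. By (coarse) stable embeddedness, $k(A)$ is the relative algebraic closure of $res(K(A))$ in $k(\mathcal{M})$, so it suffices to see that $res(K(A))$ is relatively algebraically closed in $k(\mathcal{M})$. Given $\alpha\in k(\mathcal{M})$ algebraic over $res(K(A))$, its minimal polynomial over $res(K(A))$ is separable because the residue characteristic is $0$; lift it coefficientwise to a monic polynomial $p$ over $\mathcal{O}_{\mathcal{M}}\cap K(A)$ whose reduction is that minimal polynomial, so $\alpha$ is a simple root of the reduction; by Henselianity, $p$ has a root $b\in\mathcal{O}_{\mathcal{M}}$ with $res(b)=\alpha$, and $b\in K(A)^{alg}\cap K(\mathcal{M})=K(A)$, whence $\alpha\in res(K(A))$. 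Now let $X$ be an $A$-definable subset of $k(\mathcal{M})$. By stable embeddedness, $X$ is defined in the pure field over parameters from $k(A)=res(K(A))$; in particular it is a $k(\mathcal{M})$-definable subset of the pure field. If furthermore $ac$ exists in $\mathcal{M}$, then $res(a)\in ac(K(A))\cup\{0\}$ for every $a\in\mathcal{O}_{\mathcal{M}}\cap K(A)$, so $X$ is defined over $ac(K(A))\subseteq ac(A)$ in the pure field.

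The main obstacle is exactly the tension displayed above: Proposition \ref{flenner} is stated for definability over a relatively algebraically closed subfield of $K$, whereas the objects in the proposition are definable over $A=\acl(K(A))$, which contains elements of the quotient sorts and of $\mathcal{M}^{eq}$; this is why the coarse stable embeddedness must be invoked first, after which the two genuinely valued-field inputs — $\acl=\dcl$ in ordered abelian groups, and the interplay of Hensel's lemma with relative algebraic closedness in residue characteristic $0$ — suffice to cut the parameters down to $val(A)$ and $res(K(A))$. A further point requiring a little care is that these parameters may be chosen to be actual elements of $\Gamma(\mathcal{M})$, resp.\ $k(\mathcal{M})$, and not imaginaries: this follows because the canonical parameter of an $A$-definable subset of $\Gamma(\mathcal{M})$, resp.\ $k(\mathcal{M})$, lies in $\dcl^{eq}(\Gamma(A))$, resp.\ $\dcl^{eq}(k(A))$, again by stable embeddedness.
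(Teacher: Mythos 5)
The paper cites this proposition from \cite{vandenDries2014} (Corollary 5.25) without giving a proof, so I am evaluating your blind argument purely on its merits.

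There is a genuine gap, and in the $k$ part it leads to a false intermediate claim. You write: \emph{``By stable embeddedness, $X$ is defined in the pure field over parameters from $k(A)=res(K(A))$.''} Coarse stable embeddedness only yields that $X$ is definable over \emph{some} parameters from $k(\mathcal{M})$; the assertion that these parameters can be taken in $k(A)$ is exactly what the paper calls \emph{strong} stable embeddedness of the residue field, and Subsection \ref{contreEx} constructs a counterexample: in $\mathcal{M}=\mathbb{Q}^{alg}(u)((t^{\mathbb{Q}}))$ with $A$ the relative algebraic closure of $\mathbb{Q}(u\cdot t)$, the set $X=u\bmod (k^*)^2$ is $A$-definable but has infinitely many $\mathbb{Q}^{alg}=k(\acl(A))$-conjugates, so it is not $k(\acl(A))$-definable, hence not $res(K(A))$-definable. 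Your intermediate conclusion is thus false (the proposition deliberately only claims $k(\mathcal{M})$-definability in the $ac$-free case, and $k(\mathcal{M})$-definability follows immediately from the coarse statement, so you did not need the stronger claim there). The source of the error is the step from ``$X$ is quantifier-free definable over $rv(K(A))$ in $(\RV,\cdot,\oplus)$'' to ``$X$ is definable over $k(A)$ in the pure field $k$'': a subset of the kernel $k^*$ of the sequence $k^*\to\RV\to\Gamma$ can genuinely require parameters from $\RV\setminus k^*$, whereas the quotient $\Gamma$ does not have this problem — this asymmetry is precisely why the proposition's conclusions differ for $\Gamma$ and $k$.

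Two consequences. First, your claim that $k(A)=\acl_k(res(K(A)))$ is itself an invocation of strong stable embeddedness and is not established by your argument (the Hensel half showing $res(K(A))$ is relatively algebraically closed is fine; the containment $k(A)\subseteq\acl_k(res(K(A)))$ is where the circularity lies). Second, the $ac$-case conclusion is correct but in your proof it inherits the broken step; the right route is Pas quantifier elimination, which splits $\RV\cong k^*\times\Gamma$ and thereby separates the parameters. Finally, the $\Gamma$ part of your argument also invokes ``stable embeddedness'' as if it automatically supplied $\Gamma(A)$-parameters; there the final conclusion happens to be true (strong stable embeddedness does hold for $\Gamma$), but this too needs a genuine argument — either through the short-exact-sequence machinery or the cited Corollary 5.25 — rather than a one-line appeal to the coarse statement.
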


\begin{corollary}\label{dominationValeurCorpique}
Let $\mathcal{M}\models HF_{0, 0}$, $A\subset \mathcal{M}$, and $\gamma$, $\delta\in \Gamma(\mathcal{M})$, so that $A=\acl(K(A))$. If $\gamma\equiv_{\Gamma(A)}\delta$ in the ordered group $\Gamma(\mathcal{M})$, then $\gamma\equiv_A\delta$ in the valued field $\mathcal{M}$.
\end{corollary}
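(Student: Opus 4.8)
The plan is to derive this statement directly from the stable embeddedness of $\Gamma$ furnished by Proposition \ref{stablementPlonge}. Since $\gamma$ and $\delta$ are single elements of the sort $\Gamma$, the type $\tp(\gamma/A)$ in the valued field $\mathcal{M}$ (equivalently, in $\mathcal{M}^{eq}$) is entirely determined by the family of $A$-definable subsets of $\Gamma(\mathcal{M})$ to which $\gamma$ belongs: if $\phi(x,\bar a)$ is a formula of the valued-field language with $\bar a$ a tuple from $A$ and $x$ a variable of sort $\Gamma$, then $\{x\in\Gamma(\mathcal{M}):\mathcal{M}\models\phi(x,\bar a)\}$ is exactly such a set. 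So it suffices to show that $\gamma$ and $\delta$ lie in precisely the same $A$-definable subsets of $\Gamma(\mathcal{M})$.

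First I would fix an arbitrary $A$-definable set $X\subset\Gamma(\mathcal{M})$. Because $A=\acl(K(A))$, Proposition \ref{stablementPlonge} applies and tells us that $X$ is already definable in the pure ordered group structure on $\Gamma(\mathcal{M})$, using only parameters from $val(A)$. The one point to keep track of is the inclusion $val(A)\subset\Gamma(A)$: here $val(A)$ denotes $val(K(A))$, and each element of $val(K(A))$ lies in $\dcl$ of an element of $A$ and in the sort $\Gamma$, hence in $\Gamma(\mathcal{M})\cap A=\Gamma(A)$. Consequently, the hypothesis $\gamma\equiv_{\Gamma(A)}\delta$ in the ordered group $\Gamma(\mathcal{M})$ implies $\gamma\equiv_{val(A)}\delta$ in that same structure, and therefore $\gamma\in X$ if and only if $\delta\in X$.

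Since $X$ was arbitrary, $\gamma$ and $\delta$ belong to the same $A$-definable subsets of $\Gamma(\mathcal{M})$, whence $\gamma\equiv_A\delta$ in $\mathcal{M}$, as desired. I do not expect a genuine obstacle here — the substantive content has been packaged into Proposition \ref{stablementPlonge} — the only things requiring a moment's attention are the $val(A)$ versus $\Gamma(A)$ bookkeeping just indicated, and the elementary observation that a complete type of an element living in a single sort is pinned down by the definable sets of that sort over the base.
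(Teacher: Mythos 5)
Your argument is correct and is precisely the one the paper has in mind: the corollary is stated without proof as a direct consequence of Proposition~\ref{stablementPlonge}, and you have simply unpacked that dependence (reduce to agreement on $A$-definable subsets of $\Gamma$, invoke stable embeddedness to push such a set down to a $val(A)$-definable set in the ordered group, and note $val(A)\subset\Gamma(A)$ so the hypothesis applies). Nothing to add.
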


The reader will find hints of the proof of the following lemma in (\cite{zeta}, proofs of Proposition 5.9 and Lemma 5.10, and Remark 5.14). An explicit proof is provided in section \ref{sectStPl} of this paper.

\begin{lemma}\label{zetaRV}
Let $\mathcal{M}\models HF_{0, 0}$, and $X$ a definable subset of $\RV$. Then there exists a finite family $\gamma_1...\gamma_n\in \Gamma$, definable subsets $X_i\subset \RV$ of $val^{-1}(\gamma_i)$, and a bound $N<\omega$, so that $X_i\subset X$ for all $i$, and $X\setminus\left(\bigcup\limits_i X_i\right)$ is a reunion of cosets of the $\emptyset$-definable subgroup of $\RV$ corresponding to the embedding:
$$
(k^*)^N\longrightarrow k^*=\faktor{\mathcal{O}^*}{1+\mathfrak{M}}\longrightarrow \faktor{K^*}{1+\mathfrak{M}}=\RV
$$
\end{lemma}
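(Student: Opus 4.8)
The plan is to reduce the problem to the structure of definable subsets of $k$ and $\Gamma$, using the fact that $\RV$ fibers over $\Gamma$ via $val$, with fibers that are (after choosing a point) cosets of $k^*$. First I would take a definable $X\subseteq\RV$ and, working in a sufficiently saturated model (using Fact \ref{existencePasExtension} to get a section $s$ and angular component $ac$, hence a splitting $\RV\cong k^*\times\Gamma$ as a \emph{set}, though not canonically and not definably without these maps), analyse the fibers $X_\gamma=X\cap val^{-1}(\gamma)$. For each $\gamma\in\Gamma$, the fiber $val^{-1}(\gamma)$ is a principal homogeneous space under $k^*$, so $X_\gamma$ corresponds to a definable subset of $k^*$; by Proposition \ref{stablementPlonge} (stable embeddedness of $k$), after base change this is a subset of $k^*$ definable in the pure field $k$. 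By quantifier elimination / finiteness results for the pure field — more precisely, the key input is that in $k$ every definable subset of $k^*$ is, up to finitely many points, a Boolean combination of cosets of the subgroups $(k^*)^N$, and there is a \emph{uniform} bound $N$ as $\gamma$ varies since $X$ is a single formula (compactness gives uniformity of the bound $N$ and of the "finitely many exceptional points" count across the family of fibers).

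Next I would stratify $\Gamma$ according to the "type" of the fiber: the set of $\gamma$ for which $X_\gamma$ is cofinite-in-a-union-of-$(k^*)^N$-cosets of a fixed shape is a definable subset of $\Gamma$, and by Proposition \ref{stablementPlonge} again these are definable in the pure ordered group $\Gamma$, hence (by $o$-minimality of divisible ordered abelian groups, or weak $o$-minimality in the Presburger-like case — only the finiteness of the "boundary" is needed) each such stratum is a finite union of points and open intervals. On the open-interval strata, the fibers vary "uniformly" and one checks that over an interval the only way a single formula can define, fiber-by-fiber, a union of $(k^*)^N$-cosets in a way consistent across the interval is for it to genuinely be a union of $(k^*)^N$-cosets of $\RV$ (this is where the subgroup $(k^*)^N\hookrightarrow\RV$ from the statement enters: its cosets are exactly the definable sets that are "a coset of $(k^*)^N$ in every fiber, uniformly"). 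The finitely many exceptional fibers over the point-strata, together with the finitely many exceptional points in each interval-stratum's generic fiber, get absorbed into the finite collection of $\gamma_i$ and the sets $X_i\subseteq val^{-1}(\gamma_i)$ — here one must be slightly careful that $X_i$ is required to be \emph{contained} in $X$, so one only puts into $X_i$ the part of the exceptional fibers that lies in $X$, and shows the complementary "bad" part of those fibers is still captured by the $(k^*)^N$-coset description, or else is itself finite and can be bundled into more $X_i$'s.

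The main obstacle I expect is the uniformity and the descent of parameters: turning the fiberwise description (each $X_\gamma$ is up to finitely many points a union of $(k^*)^N$-cosets) into a \emph{global} description with a single $N$ and finitely many exceptional fibers, and in particular showing that on an interval of values the coset-structure cannot "rotate" fiber-to-fiber in a definable way that fails to come from an honest union of cosets of the fixed subgroup $(k^*)^N\subseteq\RV$. This is essentially a statement about definable families of subsets of $k^*$ in a valued field, and the cleanest route is the automorphism argument the author alludes to — passing to a Hahn-series elementary extension $k'((t^G))$ where $ac$, $s$ are available and explicit, and using automorphisms that multiply $t^\gamma$ by scalars to move points within and across fibers, thereby forcing the defining formula to be invariant under the relevant coset action. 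I would set up exactly those automorphisms, use them to pin down $N$ (as the order of the relevant finite quotient $k^*/(k^*)^N$, which is finite by the standing hypothesis that $[k^*:(k^*)^N]<\infty$), and then transfer back to $\mathcal{M}$.
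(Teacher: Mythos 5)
The central step in your plan is a claim that is false in general, and this is a genuine gap. You assert that ``in $k$ every definable subset of $k^*$ is, up to finitely many points, a Boolean combination of cosets of the subgroups $(k^*)^N$.'' Lemma \ref{zetaRV} is stated for an arbitrary $\mathcal{M}\models HF_{0,0}$, so $k$ can be \emph{any} field of characteristic zero. If $k=\mathbb{Q}$ or any finitely generated field, the pure-field-definable subsets of $k^*$ include arithmetically complicated sets (e.g.\ the positive integers inside $\mathbb{Q}^*$) which are nothing like finite modifications of unions of cosets of $(k^*)^N$; if $k$ is a Henselian valued field itself, valuation balls are definable. The only case where your claim holds is $k$ algebraically closed (strong minimality). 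The lemma does not say that individual fibers $X_\gamma$ are union-of-coset sets; it says that \emph{all but finitely many} fibers are, which is a statement about the interplay between $\Gamma$ and $\RV$, not about the internal structure of definable subsets of $k^*$. A similar difficulty affects your stratification step: $\Gamma$ is just an ordered Abelian group (it need not be divisible nor elementarily equivalent to $\mathbb{Z}$), and you cannot invoke $o$-minimality or weak $o$-minimality to get finite boundaries.

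The paper's proof bypasses all of this. Set $H=\bigcap_{N}(k^*)^N$, $X_N=\{x\in\RV:\,x\!\mod(k^*)^N\subset X\}$, and $X_\omega=\{x:\,x\!\mod H\subset X\}$; by compactness it suffices to show $val(X\setminus X_\omega)$ is finite. One embeds $\mathcal{M}$ elementarily in a Hahn field $\mathcal{M}_1$, takes a large saturated extension $\mathcal{N}$, and re-embeds $\mathcal{N}$ into a Hahn field $\mathcal{M}_2$ over $\mathcal{M}_1$ (Propositions \ref{plongementHahnSimple}, \ref{plongementHahn}). If $val(X\setminus X_\omega)$ were infinite, saturation gives $x\in X\setminus X_\omega$ with $val(x)\notin\Gamma(\mathcal{M}_1)$, hence $y\in H$ with $xy\notin X$. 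Since $H$ is divisible, Lemma \ref{normalisateurDiv} produces a homomorphism on $\mathbb{Q}\cdot val(x)$ sending $val(x)$ to $y$, extended trivially on a complement of $\mathbb{Q}\cdot val(x)$ containing $\mathbb{Q}\otimes\Gamma(\mathcal{M}_1)$; the resulting $aut^2$-automorphism fixes $\mathcal{M}_1$ pointwise but sends $x$ to $xy$, contradicting $\mathcal{M}_1$-definability of $X$. You do gesture at this automorphism argument at the very end as a way to handle ``obstacles,'' and that instinct is exactly right --- but it is not a patch for the fiberwise plan, it \emph{is} the proof. The fiberwise structure you were trying to establish directly (generic fibers are unions of $(k^*)^N$-cosets) is a \emph{consequence} of this argument, not an input to it.
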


The above lemma should also follow from the material on short exact sequences in \cite{suitesExactes}.

\subsection{Balls}

\begin{definition}
Let $\mathcal{M}$ be a valued field. A \textit{ball} $X$ in $\mathcal{M}$ is a definable subset of $K(\mathcal{M})$ of the form:
$$
X=\left\lbrace x\in K|val(x-c) R \gamma\right\rbrace
$$
with $c\in K(\mathcal{M})$, $\gamma\in\Gamma(\mathcal{M})$, and $R$ is either $>$ (in that case $X$ is \textit{open}) or $\geqslant$ (here $X$ is \textit{closed}). If $X$ is defined as above, then its radius is $rad(X)=\gamma$. If $A\subset\mathcal{M}^{eq}$, then $X$ is an \textit{$A$-ball} if the imaginary canonical parameter of $X$ is an element of $\dcl^{eq}(A)$. Moreover, we say that an $A$-ball $X$ is \textit{pointed} when it has a point in $K(A)$.
\par We will define two notions: genericity and weak genericity. Let us first give an illustration before the formal details. Suppose $A=\acl^{eq}(A)\subset\mathcal{M}^{eq}$, and $c\in K(\mathcal{M})$. The set $\mathfrak{B}$ of every $A$-ball that contains $c$ turns out to be a chain with respect to inclusion, this is a well-known fact in ultrametric geometry. Now, not every $A$-ball is necessarily pointed, so let $\mathfrak{B}'$ be the chain of pointed $A$-balls containing $c$. The chain $\mathfrak{B}'$ is a final segment of $\mathfrak{B}$ which might be strict. Then, the type-definable set $\cap\mathfrak{B}'$ is an approximation of $c$ that is weaker than $\cap\mathfrak{B}$. Even if this approximation is weaker, it is witnessed by points from $K(A)$ (because the balls of $\mathfrak{B}'$ are pointed), hence it is often easier to manipulate than $\cap\mathfrak{B}$, especially when $A$ is generated by field parameters.
\par Let us now define formally our notions of genericity. Let $\mathfrak{B}$ be a chain (with respect to inclusion) of $A$-balls, and $c\in K$ in an elementary extension of $\mathcal{M}$. By convention, if $\mathfrak{B}=\emptyset$, then $\cap\mathfrak{B}$, the intersection of every ball in $\mathfrak{B}$, is the definable set $K$. We say that $c$ is \textit{$A$-generic} of $\mathfrak{B}$ if the following conditions hold:
\begin{itemize}
\item $c$ is in the $A$-type-definable set $\cap\mathfrak{B}$.
\item For every $A$-ball $X\subsetneq\cap\mathfrak{B}$, we have $c\not\in X$.
\end{itemize}
The above conditions clearly yield a partial type over $A$. We can use basic saturation arguments to show that such a type is consistent. In case $\mathfrak{B}$ has a least element, then we have to use the fact that the residue field is infinite (and maybe use a density argument in case the least ball is not closed). Else, consistency is simply established by compactness.
\par Likewise, if $\mathfrak{B}$ is a chain of \textbf{pointed} $A$-balls, then $c$ is said to be \textit{weakly $A$-generic} of $\mathfrak{B}$ when the following conditions hold:
\begin{itemize}
\item $c\in\cap\mathfrak{B}$.
\item For every \textbf{pointed} $A$-ball $X\subsetneq\cap\mathfrak{B}$, we have $c\not\in X$.
\end{itemize}
\par Note that our notion of weak genericity only makes sense when $\mathfrak{B}$ is a chain of pointed $A$-balls, otherwise it is undefined. If $\mathfrak{B}$ has a non-pointed element, then an $A$-generic point of $\mathfrak{B}$ is weakly $A$-generic of some chain of $A$-balls whose intersection is strictly coarser than $\cap\mathfrak{B}$, and it is not weakly $A$-generic of $\mathfrak{B}$. However, if every ball of $\mathfrak{B}$ is pointed, then $A$-genericity also implies weak $A$-genericity.
\par Geometrically speaking, the partial type of the $A$-generics of $\mathfrak{B}$ can be written as an intersection of some decreasing sequence of $A$-definable crowns (a crown being a ball from which we removed some subball that can be empty). In case of weak genericity, less crowns are involved, so the set of realizations is larger.
\par If $\mathfrak{B}$ has a least element which is a closed ball, then we will say that $\mathfrak{B}$ is \textit{residual}. If not, then $\mathfrak{B}$ will be called \textit{ramified} (with respect to $A$) if $\cap\mathfrak{B}$ strictly contains an $A$-ball or a point in $K(A)$, and $\mathfrak{B}$ will be called \textit{immediate} (with respect to $A$) otherwise.
\par If $X$ is a ball, $a\in X$ and $b\not\in X$, then we have $rv(b-a)=rv(b-a')$ for all $a'\in X$. As a result, we can define $rv(b-X)$ to be $rv(b-a)$ for any $a\in X$.
\end{definition}

\begin{remark}
If the value group is discrete, then every ball is both open and closed. If it is dense however, then there is no ball that is both open and closed.
\end{remark}

\begin{remark}\label{intersectionsEgales}
\par Let $\mathfrak{B}$ and $\mathfrak{B}'$ be chains of $A$-balls. One can note that the sets of $A$-generic points of each chain of $A$-balls are the classes of some equivalence relation (to belong to the same $A$-balls), which is even $A$-type-definable. As a result, $\mathfrak{B}$ and $\mathfrak{B}'$ have the same $A$-generic points if and only if they have at least one $A$-generic point in common.
\par Using the definition of genericity, one can easily show that $\mathfrak{B}$ and $\mathfrak{B}'$ have the same $A$-generic points if and only if the $A$-type-definable sets $\cap\mathfrak{B}$ and $\cap\mathfrak{B}'$ coincide.
\par Moreover, if the balls of $\mathfrak{B}$ and $\mathfrak{B}'$ are pointed, then the same two characterizations hold when we look at their $A$-weakly generic points.
\end{remark}

When the parameter set is algebraically closed, and generated by field elements, we have a good understanding of when the notions of genericity and weak genericity do not coincide:

\begin{proposition}\label{fortVSFaible}
Let $\mathcal{M}\models HF_{0, 0}$, $A\subset \mathcal{M}^{eq}$ so that $A=\acl^{eq}(K(A))$, and $\mathfrak{B}$ a chain of pointed $A$-balls. Suppose there is $c\in K(\mathcal{M})$, $A$-weakly-generic of $\mathfrak{B}$, but not $A$-generic of $\mathfrak{B}$.
\par Then $\mathfrak{B}$ is residual. Moreover, if $Y$ is the largest open ball that contains $c$ which is strictly contained in $\cap\mathfrak{B}$, then $Y$ is an $A$-ball (which must not be pointed by weak genericity of $c$), and $c$ is $A$-generic of $\{Y\}$.
\end{proposition}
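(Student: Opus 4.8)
The plan is to first extract a non‑pointed $A$-ball and then combine two ingredients: some ultrametric bookkeeping, and the fact that non‑pointed $A$-balls are single complete types over $A$. Since $c$ is not $A$-generic of $\mathfrak{B}$, there is an $A$-ball $X$ with $c\in X\subsetneq\cap\mathfrak{B}$; and $X$ cannot be pointed, for then $c\in X$ would contradict weak $A$-genericity. For the second ingredient, note that $A=\acl^{eq}(K(A))$ gives $K(A)=K(A)^{alg}\cap K$, so by Proposition \ref{flenner} the type of a field element $d$ over $A$ is controlled by the family $(rv(d-a))_{a\in K(A)}$ together with the induced structure on the $\RV$-sorts; if $Z$ is a non‑pointed $A$-ball then every $a\in K(A)$ lies outside $Z$, so $rv(d-a)=rv(Z-a)$ is independent of $d\in Z$, whence all points of $Z$ have the same type over $A$. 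In particular a non‑pointed $A$-ball is the smallest $A$-ball containing any of its points.

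Next I would show that $\cap\mathfrak{B}$ is a closed ball; this is exactly what ``$\mathfrak{B}$ residual'' comes down to here (replacing $\mathfrak{B}$ by the one‑element chain $\{\cap\mathfrak{B}\}$, which has the same weakly $A$-generic points, changes nothing). So assume $\mathfrak{B}$ has no closed least element. First I produce a point $a_0\in K(A)\cap\cap\mathfrak{B}$: if $\mathfrak{B}$ has a least element this is immediate since it is pointed, and otherwise, were there no such $a_0$, then every $a\in K(A)$ would lie outside some member of $\mathfrak{B}$, so $rv(c'-a)$ would be the same for all $c'\in\cap\mathfrak{B}$ and $\cap\mathfrak{B}$ would be a single complete type over $A$ by the above — impossible, as the $A$-definable set $X$ is a proper subset of $\cap\mathfrak{B}$. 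Since $X$ is non‑pointed, $a_0\notin X$, so $\mu:=val(X-a_0)=val(c-a_0)\in\Gamma(A)$, and one checks $\mu>rad(B)$ for every $B\in\mathfrak{B}$ (as $\mathfrak{B}$ has no closed least element, each $B\in\mathfrak{B}$ is open or properly contains another member of $\mathfrak{B}$, and in either case $\mu>rad(B)$ follows from $a_0,c\in\cap\mathfrak{B}$). Then $D:=\{x:val(x-a_0)\geqslant\mu\}$ is a pointed $A$-ball, contains $c$, and is contained in $\cap\mathfrak{B}$; by weak $A$-genericity of $c$ it cannot be strictly smaller than $\cap\mathfrak{B}$, so $D=\cap\mathfrak{B}$ and $\cap\mathfrak{B}$ is closed.

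Finally, write $\gamma_0:=rad(\cap\mathfrak{B})\in\Gamma(A)$ and let $Y$ be the open ball of radius $\gamma_0$ about $c$, which is the largest open ball containing $c$ strictly inside $\cap\mathfrak{B}$. Comparing radii, $X\subseteq Y$: if $rad(X)=\gamma_0$ then $X=Y$, and if $rad(X)>\gamma_0$ then $X$ sits inside a single open sub‑ball of $\cap\mathfrak{B}$ of radius $\gamma_0$, which must be $Y$ as $c\in X$. So $Y=\{x:\exists x'\in X,\ val(x-x')>\gamma_0\}$ is an $A$-ball; it is not pointed, for a pointed $Y$ would be a pointed $A$-ball strictly inside $\cap\mathfrak{B}$ through $c$, against weak genericity. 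Being a non‑pointed $A$-ball, $Y$ is a single complete type over $A$ by the first paragraph, so $c$ lies in no $A$-ball properly contained in $Y$, i.e. $c$ is $A$-generic of $\{Y\}$. I expect the hard part to be the type‑theoretic ingredient — that non‑pointed $A$-balls are single types over the imaginary base $A=\acl^{eq}(K(A))$: Proposition \ref{flenner} is phrased over a relatively algebraically closed subfield, so one must justify the passage from $\tp(d/K(A))$ to $\tp(d/\acl^{eq}(K(A)))$; the ball manipulations are otherwise routine but must be set up to cover the case where $\mathfrak{B}$ has no least element.
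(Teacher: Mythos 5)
Your argument is correct and reaches the stated conclusion, but it organises the work differently from the paper. The paper's proof applies Proposition~\ref{flenner} directly to the non-pointed $A$-ball $Y$: it writes $Y$ by a formula in which the $K$-variable appears only inside $rv(x-a_i)$ with $a_i\in K(A)$, observes that no value can lie strictly between $\min_i val(a_i-Y)$ and $rad(Y)$, and reads off that $Y$ is the open ball of radius $\min_i val(a_i-Y)$ about $c$; the closed hull $X$, residuality, and uniqueness of $Y$ then follow directly. You instead isolate a semantic principle --- a non-pointed $A$-ball is a single complete type over $A$ --- and deduce everything by ultrametric bookkeeping, first showing $\cap\mathfrak{B}$ is a closed ball with a $K(A)$-centre, then identifying $Y$ as the open ball of radius $rad(\cap\mathfrak{B})$ about $c$. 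The two routes meet in the middle: your key claim is essentially Proposition~\ref{zetaBall} (applied to the chain of pointed $A$-balls through a fixed point of the non-pointed ball, using the second bullet when the intersection of that chain meets $K(A)$ and the first otherwise), and the paper's proof of \ref{zetaBall} uses exactly the same Flenner reduction you invoke. Note, however, that \ref{zetaBall} is stated and proved \emph{after} \ref{fortVSFaible} in the present exposition, so to make your argument literal you would either need to move \ref{zetaBall} earlier (its proof does not rely on \ref{fortVSFaible}) or re-derive its content, which is what your first paragraph sketches.

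Two smaller points. The passage from $\tp(d/K(A))$ to $\tp(d/\acl^{eq}(K(A)))$ that you flag as the ``hard part'' is precisely the first line of the paper's proof of \ref{zetaBall}, so it is not a gap to fill from scratch, but you should be explicit about using it. And the remark that showing $\cap\mathfrak{B}$ is a closed ball ``is exactly what $\mathfrak{B}$ residual comes down to'', after replacing $\mathfrak{B}$ by $\{\cap\mathfrak{B}\}$, skips a beat: the proposition asserts that $\mathfrak{B}$ itself has a closed least element, i.e.\ $\cap\mathfrak{B}\in\mathfrak{B}$. This does follow --- as the paper points out, via compactness: if $\cap\mathfrak{B}$ were a closed ball strictly contained in every member of $\mathfrak{B}$ and $\mathfrak{B}$ had no least element, one could find a value strictly between every $rad(B)$ and $rad(\cap\mathfrak{B})$ and hence a ball strictly between $\cap\mathfrak{B}$ and all members, contradicting $\cap\mathfrak{B}$ being the intersection --- but it should be said.
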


\begin{proof}
By hypothesis, there exists $Y$ an $A$-ball containing $c$ that is not pointed. In particular, $Y$ is strictly contained in $\cap\mathfrak{B}$. By \ref{flenner}, there exist a finite family $(a_i)_{i<N}$ of elements of $K(A)$, and $Z$ an $A$-definable subset of $\RV^N$, so that:
$$
Y=\left\lbrace x\in K|(rv(x-a_i))_i\in Z\right\rbrace
$$
Clearly, as $Y$ is not pointed, none of the $a_i$ belongs to $Y$. Let $\gamma\in\Gamma(A)$ be the least of the $val(a_i-Y)$. There does not exist any $\delta\in\Gamma(\mathcal{M})$ so that $\gamma<\delta<rad(Y)$, for if not, then choose $b\in K(\mathcal{M})$ for which $val(b-c)=\delta$, for every $i$ we have $rv(a_i-b)=rv(a_i-c)$, so $b\in Y$, which contradicts $val(b-c)<rad(Y)$.
\par If $\Gamma(\mathcal{M})$ is dense, this must imply that $Y$ is open and $rad(Y)=\gamma$. If $\Gamma(\mathcal{M})$ is dicrete, then of course $Y$ is open and closed, and the radius of $Y$ as a closed ball is the successor of $\gamma$.
\par As $Y$ is open, let $X$ be the least closed ball that strictly contains $Y$. As $X$ is definable over the canonical parameter of $Y$, $X$ is an $A$-ball. The radius of $X$ is $\gamma$, so at least one of the $a_i$ must be in $X$, so $X$ is pointed. There is no ($\mathcal{M}^{eq}$-)ball between $X$ and $Y$, and $c\in Y$, so $c$ must be $A$-weakly generic of $\{X\}$. Now, by \ref{intersectionsEgales} applied to $\{X\}$ and $\mathfrak{B}$, $X$ must be a lower bound of $\mathfrak{B}$. Then, $X$ must be the least element of $\mathfrak{B}$, otherwise a standard compactness argument could be used to show that the inclusion of type-definable sets $X\subset\cap\mathfrak{B}$ would be strict, which would contradict \ref{intersectionsEgales}. As a result, we notice that $X$ is canonical, its construction does not depend on the choice of $Y$, but $Y$ does depend on $X$: it is the largest open ball containing $c$ and strictly contained in $X$. Therefore, the choice of $Y$ must be unique: $Y$ is the only $A$-ball containing $c$ that is not pointed. There is no $A$-ball smaller than $Y$ containing $c$, so $c$ is $A$-generic of $\{Y\}$, this concludes the proof.
\end{proof}

\begin{corollary}\label{relevementCentre}
Let $\mathcal{M}\models HF_{0, 0}$, and $A\subset \mathcal{M}^{eq}$ so that $A=\acl^{eq}(K(A))$. If $\Gamma(\mathcal{M})$ is dense, then any closed $A$-ball is pointed.
\end{corollary}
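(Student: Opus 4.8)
The plan is to apply Proposition~\ref{flenner} to the closed ball, rewriting it as a membership condition on finitely many $rv$-values centered at points of $K(A)$, and then to run the ultrametric computation from the proof of Proposition~\ref{fortVSFaible} in order to force one of those centers into the ball, exploiting density of $\Gamma(\mathcal{M})$.

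First I would observe that $A=\acl^{eq}(K(A))$ implies, via the corollary following Proposition~\ref{flenner}, that $K(A)=K(A)^{alg}\cap K(\mathcal{M})$, so Proposition~\ref{flenner} applies with parameter set $K(A)$. Let $Z$ be a closed $A$-ball; it is a $\dcl^{eq}(A)$-definable subset of $K$, so by the second part of Proposition~\ref{flenner} there are finitely many $a_1,\dots,a_n\in K(A)$ (with $n\geq1$, since $Z$ is neither empty nor all of $K$) and a subset $D\subseteq\RV_\infty^n$ definable over $\dcl^{eq}(K(A))$ such that
$$
Z=\{x\in K\mid (rv(x-a_1),\dots,rv(x-a_n))\in D\}.
$$
In particular, whenever $b,c\in K(\mathcal{M})$ satisfy $rv(b-a_i)=rv(c-a_i)$ for all $i$, we get $b\in Z\iff c\in Z$.

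Now suppose for contradiction that $Z$ is not pointed, so no $a_i$ lies in $Z$. Fix $c\in Z\cap K(\mathcal{M})$ (say a defining center of $Z$); then $Z$ is the closed ball of radius $rad(Z)$ around $c$, and $a_i\notin Z$ gives $val(c-a_i)<rad(Z)$ for each $i$. As there are finitely many indices, $\mu:=\max_i val(c-a_i)<rad(Z)$, with $\mu,rad(Z)\in\Gamma(\mathcal{M})$. Density of $\Gamma(\mathcal{M})$ provides $\delta\in\Gamma(\mathcal{M})$ with $\mu<\delta<rad(Z)$; choose $e\in K(\mathcal{M})$ with $val(e)=\delta$ and put $b:=c+e$. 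For each $i$ we have $val(b-c)=\delta>val(c-a_i)$, so $b-a_i=(c-a_i)+e$ has the same leading term as $c-a_i$, whence $rv(b-a_i)=rv(c-a_i)$. By the previous paragraph $b\in Z$; but $val(b-c)=\delta<rad(Z)$ means $b\notin Z$, a contradiction. Therefore some $a_i$ lies in $Z\cap K(A)$, i.e.\ $Z$ is pointed.

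The main thing to be careful about when writing this out is the invocation of Proposition~\ref{flenner}: one needs that $Z$ really is expressible as a membership condition on the tuple $(rv(x-a_i))_i$ (so that equal $rv$-data forces equal membership) and that the centers $a_i$ can be taken inside $K(A)$ rather than only in its algebraic closure --- both of which are secured by the standing hypothesis $A=\acl^{eq}(K(A))$ together with the second clause of Proposition~\ref{flenner}. Beyond that, the argument is the same short ultrametric manipulation already used inside the proof of Proposition~\ref{fortVSFaible}.
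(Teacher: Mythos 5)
Your proof is correct, but it takes a more direct route than the paper does. The paper treats Proposition~\ref{fortVSFaible} as a black box: it picks $c$ an $A$-generic point of $\{X\}$, considers the chain $\mathfrak{B}$ of pointed $A$-balls through $c$, and observes that if $X$ were not pointed then $c$ would fail to be $A$-generic of $\mathfrak{B}$, so that \ref{fortVSFaible} would produce an \emph{open} $A$-ball $Y$ with $c$ generic of $\{Y\}$, contradicting $X$ being closed via Remark~\ref{intersectionsEgales}. You instead re-run, specialised to $Z$ itself, the ultrametric computation that sits \emph{inside} the proof of \ref{fortVSFaible}: apply \ref{flenner} to write $Z$ as a condition on the tuple $(rv(x-a_i))_i$ with $a_i\in K(A)$, then use density to slide a nearby point $b=c+e$ with $\mu<val(e)<rad(Z)$, so that $b$ has the same $rv$-data but falls outside $Z$. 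That argument is sound (in particular $\mu=\max_i val(c-a_i)$ is the right quantity to isolate, and $rv(b-a_i)=rv(c-a_i)$ follows because $val(e)>val(c-a_i)$ for every $i$), and it is essentially the same ``no $\delta$ strictly between the outer radius and $rad(Z)$'' step the paper uses to establish \ref{fortVSFaible}. What you gain is a shorter, self-contained proof that avoids the genericity/weak-genericity machinery and Remark~\ref{intersectionsEgales}; what you give up is the reuse of \ref{fortVSFaible}, so your argument duplicates a computation that the paper has already carried out once. Both versions lean on the same application of \ref{flenner} to an $A$-ball with $A\subset\mathcal{M}^{eq}$, $A=\acl^{eq}(K(A))$, so no additional hypothesis is being smuggled in beyond what the paper itself uses.
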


\begin{proof}
Let $X$ be a closed $A$-ball, and $c\in K(\mathcal{M})$ an $A$-generic point of $\{X\}$. Let $\mathfrak{B}$ be the set of every pointed $A$-ball containing $c$. If $X$ was not pointed, then $c$ would not be $A$-generic of $\mathfrak{B}$, so we could apply the proposition to show that $c$ would be $A$-generic of $\{Y\}$ for some open ball $Y$. However, $X$ is not open, so we reach a contradiction by \ref{intersectionsEgales} applied to $\{X\}$ and $\{Y\}$, and $X$ must be pointed.\end{proof}

\begin{corollary}
Let $\mathcal{M}\models HF_{0, 0}$, and $A\subset \mathcal{M}^{eq}$ so that $A=\acl^{eq}(K(A))$. Let $\mathfrak{B}$ be a chain of $A$-balls. If $\mathfrak{B}$ is ramified, then $\cap\mathfrak{B}$ actually has a point in $K(A)$.
\end{corollary}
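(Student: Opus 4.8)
The plan is to reduce at once to the case where $\cap\mathfrak{B}$ has no point in $K(A)$ and to derive a contradiction, exploiting Proposition \ref{fortVSFaible} and Remark \ref{intersectionsEgales} in the spirit of the proof of Corollary \ref{relevementCentre}. If $\cap\mathfrak{B}$ already contains a point of $K(A)$ there is nothing to do; otherwise, since $\mathfrak{B}$ is ramified, it is not residual and $\cap\mathfrak{B}$ strictly contains an $A$-ball $X$, which cannot be pointed (a $K(A)$-point of $X$ would lie in $\cap\mathfrak{B}$). I assume this and aim for a contradiction.

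Passing to an $|A|^+$-saturated elementary extension, I would pick $c\in K$ that is $A$-generic of $\{X\}$ (consistent because the residue field is infinite), and let $\mathfrak{C}$ be the chain of pointed $A$-balls containing $c$, so that $c$ is weakly $A$-generic of $\mathfrak{C}$. First, every $Z\in\mathfrak{C}$ strictly contains $X$: $Z$ and $X$ both contain $c$, hence are comparable; $Z\neq X$ since $Z$ is pointed and $X$ is not; and $Z\subsetneq X$ would put $c$ in an $A$-ball properly inside $X$, against genericity over $\{X\}$. Second, every $Z\in\mathfrak{C}$ contains $c\in X\subseteq\cap\mathfrak{B}$, hence is comparable with every ball of $\mathfrak{B}$, so either $Z\subseteq\cap\mathfrak{B}$ or $\cap\mathfrak{B}\subseteq Z$; the first option would exhibit a pointed $A$-ball inside $\cap\mathfrak{B}$, which is excluded. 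Thus $\cap\mathfrak{B}\subseteq\cap\mathfrak{C}$ (trivially so if $\mathfrak{C}=\emptyset$, where $\cap\mathfrak{C}=K$), so $X\subsetneq\cap\mathfrak{B}\subseteq\cap\mathfrak{C}$; in particular $X$ is an $A$-ball strictly inside $\cap\mathfrak{C}$ containing $c$, so $c$ is \emph{not} $A$-generic of $\mathfrak{C}$.

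Now I would apply Proposition \ref{fortVSFaible} to $\mathfrak{C}$: it gives that $\mathfrak{C}$ is residual, with least element a closed pointed $A$-ball $X_1=\cap\mathfrak{C}$, and that the largest open ball $Y$ containing $c$ which is strictly contained in $X_1$ is an $A$-ball with $c$ $A$-generic of $\{Y\}$ (this already disposes of the degenerate case $\mathfrak{C}=\emptyset$, for which "residual" is absurd). Since $c$ is $A$-generic of both $\{X\}$ and $\{Y\}$, Remark \ref{intersectionsEgales} forces $X=Y$, so $X\subsetneq X_1$ with no ball strictly between them, and $X_1$ is pointed. To finish, I locate $X_1$ relative to $\cap\mathfrak{B}$: it contains $c$, hence is comparable with $\cap\mathfrak{B}$; if $X_1\subseteq\cap\mathfrak{B}$ then $X_1$ is a pointed $A$-ball inside $\cap\mathfrak{B}$, a contradiction, so $\cap\mathfrak{B}\subsetneq X_1$ (strictly, as $\cap\mathfrak{B}$ is not pointed); but then every $B\in\mathfrak{B}$ satisfies $X\subsetneq\cap\mathfrak{B}\subseteq B$, and comparability of $B$ with $X_1$ combined with the absence of balls strictly between $X$ and $X_1$ forces $X_1\subseteq B$, whence $X_1\subseteq\cap\mathfrak{B}$ — contradicting $\cap\mathfrak{B}\subsetneq X_1$.

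I expect the main obstacle to be exactly this last step: showing that the pointed ball $X_1$ extracted from Proposition \ref{fortVSFaible} actually lands inside $\cap\mathfrak{B}$. This is where non-residuality of $\mathfrak{B}$ is really used, via the fact that no ball sits strictly between $X$ and $X_1$, so $\mathfrak{B}$ cannot "slip between" them. Everything else is routine bookkeeping with the standard facts that balls sharing a point form a chain and that $\cap\mathfrak{B}$ is convex within that chain.
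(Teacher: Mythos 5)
Your proof is correct and follows essentially the same approach as the paper: both reduce to an unpointed $A$-ball $X \subsetneq \cap\mathfrak{B}$, take $c$ generic of $\{X\}$, look at the chain $\mathfrak{C}$ of pointed $A$-balls containing $c$, and use Proposition \ref{fortVSFaible} to produce a pointed closed $A$-ball $X_1$ immediately above $X$ that lands inside $\cap\mathfrak{B}$. Two small organizational differences are worth noting. First, the paper cites the \emph{proof} of \ref{fortVSFaible} and separately dispatches the case ``$X$ closed and $\Gamma$ dense'' via Corollary \ref{relevementCentre}, whereas you invoke the \emph{statement} of \ref{fortVSFaible} together with Remark \ref{intersectionsEgales}, which silently absorbs that case (since the $Y$ produced there is open, $X=Y$ already forces $X$ open — and if not, you are inside a proof by contradiction anyway). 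Second, your final paragraph is more roundabout than needed: you first derive $\cap\mathfrak{B}\subsetneq X_1$ and then separately show $X_1\subseteq\cap\mathfrak{B}$, but the second argument (each $B\in\mathfrak{B}$ strictly contains $X$ and there is no ball strictly between $X$ and $X_1$, so $X_1\subseteq B$) stands alone and directly gives $X_1\subseteq\cap\mathfrak{B}$, yielding the desired $K(A)$-point without the intermediate strict inclusion; this is exactly what the paper observes up front with ``$X\subset\cap\mathfrak{B}$ by minimality of $X$.''
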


\begin{proof}
Let $Y$ be an $A$-ball strictly contained in $\cap\mathfrak{B}$. If $Y$ is closed and $\Gamma(\mathcal{M})$ is dense, then we are done by the corollary.
\par Else, let $X$ be the least closed ball strictly containing $Y$ (which is in fact the least ball strictly containing $Y$). The ball $X$ is clearly an $A$-ball, and we have $X\subset\cap\mathfrak{B}$ by minimality of $X$. This inclusion is strict as $\mathfrak{B}$ is not residual. Let $c$ be an $A$-generic point of $\{Y\}$. If $Y$ is not pointed, then we can apply the proof of the proposition (to the set of pointed $A$-balls containing $c$) to show that $X$ is pointed, so $\cap\mathfrak{B}$ has a point in $K(A)$ either way.
\end{proof}

\begin{proposition}\label{zetaBall}
Let $\mathcal{M}\models HF_{0, 0}$, and $A\subset\mathcal{M}^{eq}$ so that $A=\acl^{eq}(K(A))$. Let $c,d\in K(\mathcal{M})$ be singletons, and $\mathfrak{B}$ the set of pointed $A$-balls that contain $c$. Suppose $d\in\cap\mathfrak{B}$. Then:
\begin{itemize}
\item If $\cap\mathfrak{B}$ has no point in $K(A)$, then $c\equiv_A d$.
\item If $a\in K(A)$ is in $\cap\mathfrak{B}$, and $rv(c-a)=rv(d-a)$, then $c\equiv_A d$.
\end{itemize}
\end{proposition}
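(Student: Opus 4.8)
The plan is to show $c\equiv_A d$ by reducing, through Proposition~\ref{flenner}, to the single assertion
\[(\ast)\qquad rv(c-a)=rv(d-a)\quad\text{for every }a\in K(A),\]
and then to establish $(\ast)$ by elementary computations in the value group. For the reduction: since $A=\acl^{eq}(K(A))$, applying the corollary to Proposition~\ref{flenner} to the real set $K(A)$ gives $K(A)=K(A)^{alg}\cap K(\mathcal{M})$, so $K(A)$ is relatively algebraically closed and Proposition~\ref{flenner} is available over $K(A)$. I would then argue that every $A$-definable subset $X$ of $K$ is defined by a formula in which the $K$-variable occurs only through terms $rv(x-a)$ with $a\in K(A)$, all remaining parameters being allowed anywhere in $A$: writing $X=\phi(\mathcal{M},\bar e)$ with $\bar e\in A$, each entry of $\bar e$ is algebraic over a finite tuple from $K(A)$, so applying the relative quantifier elimination of Proposition~\ref{flenner} uniformly to a $K(A)$-definable family containing $X$, and using that a $K(A)$-definable function maps $A$ into $\acl(K(A))=K(A)$, one pushes all occurrences of the $K$-variable into terms of the form $rv(x-a)$. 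Granting this, $(\ast)$ makes $c$ and $d$ satisfy the same such formulas, hence $c\equiv_A d$.

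Next I would prove $(\ast)$, starting with the points $a\in K(A)$ that lie outside $\cap\mathfrak B$: for such $a$ there is $X\in\mathfrak B$ with $a\notin X$, and since $d\in\cap\mathfrak B\subseteq X$ we have $c,d\in X$, whence $rv(a-c)=rv(a-X)=rv(a-d)$ and so $rv(c-a)=rv(d-a)$. In the first bullet the hypothesis says exactly that $\cap\mathfrak B$ has no point in $K(A)$, so every $a\in K(A)$ lies outside it and $(\ast)$ holds, settling that case.

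For the second bullet let $a_0$ denote the given point of $K(A)$ lying in $\cap\mathfrak B$; it remains to handle $a\in K(A)$ with $a\in\cap\mathfrak B$ and $a\neq a_0$. Write $c-a=(c-a_0)+(a_0-a)$ and $d-a=(d-a_0)+(a_0-a)$, where $rv(a_0-a)$ is one fixed element of $\RV$ appearing in both sums. Now $rv(u+v)$ is determined by the pair $(rv(u),rv(v))$ except when $val(u)=val(v)$ and the leading terms of $u$ and $v$ cancel, i.e. when $rv(u)=rv(-v)$; in that degenerate case $rv(u+v)$ is not so determined. Here the degenerate case would read $rv(c-a_0)=rv(a-a_0)$, equivalently $val(c-a)>val(a-a_0)$; but then the open ball $\{x\in K:val(x-a)>val(a-a_0)\}$ --- an $A$-ball, since its centre $a$ lies in $K(A)$ and its radius $val(a-a_0)$ lies in $\Gamma(A)$, and a pointed one --- would contain $c$ and hence belong to $\mathfrak B$, forcing the contradiction $a_0\in\cap\mathfrak B\subseteq\{x\in K:val(x-a)>val(a-a_0)\}$. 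So no cancellation occurs; then $rv(c-a)$ is the well-defined value of the partial sum on the pair $(rv(c-a_0),rv(a_0-a))$, and $rv(d-a)$ the value on $(rv(d-a_0),rv(a_0-a))$, and these pairs coincide because $rv(c-a_0)=rv(d-a_0)$ by hypothesis. Hence $rv(c-a)=rv(d-a)$, which completes $(\ast)$ and the second bullet.

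The step I expect to be the main obstacle is the reduction in the first paragraph: making rigorous that the imaginary parameters of an $A$-definable subset of $K$ can be absorbed, so that, through the relative quantifier elimination, membership of a point is controlled only by its $rv$-distances to $K(A)$. By contrast, the two valuation-theoretic cases are routine once the correct pointed $A$-ball is exhibited to rule out cancellation.
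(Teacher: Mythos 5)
Your strategy matches the paper's: reduce via Proposition~\ref{flenner} to the pointwise statement $(\ast)$ that $rv(c-a')=rv(d-a')$ for every $a'\in K(A)$, then split on whether $a'$ lies in $\cap\mathfrak{B}$. The case $a'\notin\cap\mathfrak{B}$ is handled identically. For $a'\in\cap\mathfrak{B}$ the arguments diverge. The paper takes $X$ to be the smallest closed ball containing $a$ and $a'$, asserts that $X$ is a pointed $A$-ball \emph{not} containing $c$, and then reads off $rv(c-a')=rv(c-X)=rv(c-a)=rv(d-a)=rv(d-X)=rv(d-a')$ after checking $d\notin X$. You instead decompose $c-a'=(c-a)+(a-a')$, observe that $rv$ applied to a sum is determined by the $rv$'s of the summands except in the cancellation case, and exclude cancellation by producing the pointed $A$-ball $\{x:val(x-a')>val(a-a')\}$ which would otherwise force $a\in\cap\mathfrak{B}$ to lie in it, a contradiction. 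Your version is actually the more robust of the two: the paper's claim $c\notin X$ can fail. For instance, when $\mathfrak{B}$ is residual with least element $X=\cap\mathfrak{B}$ and $a,a'\in K(A)\cap X$ satisfy $val(a-a')=rad(X)$ (say $X=\mathcal{O}$, $a=0$, $a'=1$, $c$ a generic unit), one has $c\in X$ and $rv(c-a)\neq rv(c-a')$, so the paper's chain of equalities does not literally hold; your no-cancellation argument still goes through there, since the non-degeneracy condition $val(c-a')\leq val(a-a')$ is what you actually verify.

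On the reduction in your first paragraph: the passage from $(\ast)$ to $c\equiv_A d$ over the imaginary base $A$ (rather than merely over $K(A)$) is where the real subtlety lies, and the sketch you give does not quite close it --- the observation that a $K(A)$-definable function carries $A$ into $\acl(K(A))=K(A)$ only controls $K$-valued functions, whereas the parameters of an arbitrary $A$-definable set are imaginaries. Be aware, though, that the paper dispenses with this step in a single sentence (``As $A=\acl^{eq}(K(A))$, it is sufficient to show $c\equiv_{K(A)}d$'') with no further justification, so your level of detail is comparable to the source and you are right to flag it as the point most in need of elaboration.
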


\begin{proof}
As $A=\acl^{eq}(K(A))$, it is sufficient to show $c\equiv_{K(A)}d$. By \ref{flenner}, it is sufficient to show that $rv(c-a')=rv(d-a')$ for all $a'\in K(A)$. Let $a'\in K(A)$.
\begin{itemize}
\item Suppose $a'\not\in \cap\mathfrak{B}$. Let $X\in\mathfrak{B}$ so that $a'\not\in X$. Then $rv(a'-c)=rv(a'-X)=rv(a'-d)$.
\item Suppose $a'\in\cap\mathfrak{B}$ (so the point $a$ from the statement of the proposition exists). Let $X$ be the least closed ball that contains $a$ and $a'$. Then  $X$ is a pointed $A$-ball that does not contain $c$, so $rv(c-a')=rv(c-a)$ and $val(c-a')<rad(X)$. As $rv(d-a)=rv(c-a)$, we have $val(d-a)=val(c-a)<rad(X)$, so $val(d-a')<rad(X)$, so $d\not\in X$, which implies $rv(d-a')=rv(d-a)=rv(c-a)=rv(c-a')$.
\end{itemize}
Either way, we have $rv(c-a')=rv(d-a')$, so we can conclude.
\end{proof}

An extensive study of unary types over imaginary parameters has been carried out in \cite{zeta}, in the setting of pseudo-local fields. In particular, there is an analogue of the above proposition with imaginary parameters, when the value group is a $\mathbb{Z}$-group:

\begin{proposition}[\cite{zeta}, Lemma 3.7]\label{zetaBall2}
Let $\mathcal{M}\models HF_{0, 0}$ with $\Gamma(\mathcal{M})\equiv\mathbb{Z}$, and $A\subset\mathcal{M}^{eq}$ so that $A=\acl^{eq}(A)$. Let $c,d\in K(\mathcal{M})$ be singletons, and $\mathfrak{B}$ the set of $A$-balls that contain $c$. Suppose $d\in\cap\mathfrak{B}$, and $\mathfrak{B}$ is $A$-immediate. Then $c\equiv_A d$.
\end{proposition}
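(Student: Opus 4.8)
This statement is quoted from \cite{zeta}, where it follows from the analysis of unary types and imaginaries in pseudo-local fields developed there; here is the plan I would follow. Since $A=\acl^{eq}(A)$, it suffices to show that no $A$-definable subset of $K$ separates $c$ from $d$, and the plan rests on two ingredients. The first is purely combinatorial: \emph{under $A$-immediacy of $\mathfrak{B}$, the points $c$ and $d$ belong to exactly the same $A$-balls.} Indeed, if $B$ is an $A$-ball with $c\in B$ then $B\in\mathfrak{B}$, hence $d\in\cap\mathfrak{B}\subseteq B$. Conversely, suppose $d\in B$ but $c\notin B$: every $B_1\in\mathfrak{B}$ contains both $c$ and $d$, so it meets $B$ and is comparable with it, and neither $B_1\subseteq B$ nor $B_1=B$ is possible (both would force $c\in B$), so $B\subsetneq B_1$; hence $B\subseteq\cap\mathfrak{B}$, and since $c\in(\cap\mathfrak{B})\setminus B$ this inclusion is strict, so $\cap\mathfrak{B}$ strictly contains the $A$-ball $B$, contradicting $A$-immediacy. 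One handles singletons similarly: if $c\in K(A)$ then, as $c\in\cap\mathfrak{B}$ and $\mathfrak{B}$ is $A$-immediate, $\cap\mathfrak{B}$ cannot strictly contain the point $c$, so $\cap\mathfrak{B}=\{c\}$ and $d=c$ (symmetrically if $d\in K(A)$).

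The second ingredient is structural: \emph{every $A$-definable subset of $K$ is a finite Boolean combination of $A$-balls and singletons from $K(A)$.} Granting it, the first ingredient gives $c\in X\iff d\in X$ for every $A$-definable $X\subseteq K$, which is what we want. A first step towards it is to pass to the real base $A_0:=\acl^{eq}(K(A))$: since $K(A)=A\cap K$ and $A=\acl^{eq}(A)$, the field $K(A)$ is relatively algebraically closed in $K(\mathcal{M})$, so $K(A_0)=K(A)$ and Proposition \ref{flenner} applies over $A_0$, giving the structural statement for $A_0$-definable sets. Moreover, writing $\mathfrak{B}_0$ for the chain of pointed $A_0$-balls containing $c$ (so $\cap\mathfrak{B}_0\supseteq\cap\mathfrak{B}\ni d$), Proposition \ref{zetaBall} already yields $c\equiv_{A_0}d$: if $\cap\mathfrak{B}_0$ has no point in $K(A)$ this is its first clause; if it has a point $a\in K(A)$ then, by $A$-immediacy and $c\in\cap\mathfrak{B}$, either $a=c\in K(A)$ (so $\cap\mathfrak{B}=\{c\}$, $d=c$) or $a\notin\cap\mathfrak{B}$, in which case there is $B\in\mathfrak{B}$ with $a\notin B$, on which $x\mapsto rv(x-a)$ is constant, so $rv(c-a)=rv(d-a)$ and the second clause of \ref{zetaBall} applies.

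What remains — and this is the technical heart — is to pass from the real base $A_0$ to the imaginary base $A$: either by showing directly that $A$-definable subsets of $K$ are Boolean combinations of $A$-balls and $K(A)$-points, or by showing that $\tp(c/A_0)=\tp(d/A_0)$ together with $c,d\in\cap\mathfrak{B}$ already forces $c\equiv_A d$. This is exactly where the hypothesis $\Gamma(\mathcal{M})\equiv\mathbb{Z}$ is essential: the value group is then discrete, with a Presburger-type structure and no open/closed dichotomy for balls, and the imaginary sorts of the valued field become tame enough — via elimination of imaginaries towards the geometric sorts, every imaginary is interalgebraic with a tuple from $K$ and $\RV$ — that one can run the Flenner-type quantifier-elimination bookkeeping over $A$ and check that the balls it produces are genuine $A$-balls. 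I expect this imaginary-parameter version of the quantifier-elimination and stable-embeddedness analysis to be the main obstacle; everything else is the elementary ball geometry of the first paragraph.
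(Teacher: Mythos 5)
The paper states Proposition~\ref{zetaBall2} as a citation to~\cite{zeta} (Lemma~3.7) and provides no proof of its own, so there is no in-paper argument to compare yours against; the comments below concern the soundness of your plan on its own terms. Your ``first ingredient'' --- that under $A$-immediacy $c$ and $d$ lie in exactly the same $A$-balls, with the degenerate case $c\in K(A)$ handled correctly --- is right, and so is the observation that Proposition~\ref{zetaBall} already gives $c\equiv_{\acl^{eq}(K(A))}d$.

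However, your ``second ingredient'' --- that every $A$-definable subset of $K$ is a finite Boolean combination of $A$-balls and singletons from $K(A)$ --- is false, already over a real algebraically closed base, so it cannot be ``the structural statement'' that Proposition~\ref{flenner} gives you over $A_0$. Flenner's theorem only says a unary definable set is defined by an $\RV$-condition on a tuple $\bigl(rv(x-a_i)\bigr)_i$; when the residue field carries nontrivial definable structure (the whole point in this paper), this is strictly richer than a Boolean combination of balls. For instance, in an ultraproduct of the $\mathbb{Q}_p$ the $\emptyset$-definable set $\bigl\{x\in\mathcal{O}^*\bigm|res(x)\in(k^*)^2\bigr\}$ is an infinite union of residue classes and is not a finite Boolean combination of balls and points. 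So ``same $A$-balls'' does not by itself yield ``same $A$-type''; the information that actually does the work is that for each $A$-ball $X$ not containing $c$ one has $rv(c-X)=rv(d-X)$, together with a quantifier-elimination statement over imaginary parameters saying that this $\RV$-level data controls the type --- precisely the content you identify as the ``technical heart'' and defer, and precisely the step where $\Gamma\equiv\mathbb{Z}$ and the geometric-sort description of imaginaries are essential. As written, the proposal is a correct reduction plus an acknowledged gap, with the explicit bridging claim being wrong rather than merely unproven.
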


\begin{proposition}\label{consistanceGenerique}
Let $\mathcal{M}\models HF_{0, 0}$, $A\subset B\subset\mathcal{M}^{eq}$ so that $A=\acl^{eq}(A)$, $\mathfrak{B}$ a chain of $A$-balls, and $c\in K(\mathcal{M})$ a point $A$-generic of $\mathfrak{B}$. Then there exists $c'\equiv_A c$ (in an elementary extension of $\mathcal{M}$) that is $B$-generic of $\mathfrak{B}$.
\end{proposition}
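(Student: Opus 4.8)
The plan is to realize a suitable partial type. Let $\pi(x)$ be the partial type over $B$ consisting of $\tp(c/A)$ together with the formulas ``$x\notin X$'' as $X$ ranges over all $B$-balls strictly contained in $\cap\mathfrak{B}$. Since every ball of $\mathfrak{B}$ is $A$-definable, the condition ``$x\in\cap\mathfrak{B}$'' already lies in $\tp(c/A)$; hence any realization $c'$ of $\pi$ in an elementary extension satisfies $c'\equiv_A c$, lies in $\cap\mathfrak{B}$, and avoids every $B$-ball strictly contained in $\cap\mathfrak{B}$, i.e. is $B$-generic of $\mathfrak{B}$. So it suffices to show $\pi$ is consistent, and by compactness this reduces to: given finitely many $B$-balls $X_1,\dots,X_n$, each strictly contained in $\cap\mathfrak{B}$, find a realization of $\tp(c/A)$ avoiding $X_1\cup\dots\cup X_n$.

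Next come some reductions for this finite instance. Two balls are nested or disjoint, so after discarding redundant $X_i$ we may assume they are pairwise disjoint; if $c$ lies in none of them we are done with $c'=c$, so assume $c\in X_1$. By $A$-genericity every $A$-ball containing $c$ contains $\cap\mathfrak{B}$, so replacing $\mathfrak{B}$ by the chain of \emph{all} $A$-balls containing $c$ changes neither $\cap\mathfrak{B}$ nor the statement, and we do so. Finally, if some $X_i$ is contained in an $A$-ball strictly inside $\cap\mathfrak{B}$, then, every realization of $\tp(c/A)$ being $A$-generic of $\mathfrak{B}$, it avoids that $A$-ball and hence $X_i$, so we may discard such $X_i$; in particular $X_1$ is then not contained in any $A$-ball strictly below $\cap\mathfrak{B}$.

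The core of the proof is to show that the set of realizations of $\tp(c/A)$ — the orbit of $c$ under $\mathrm{Aut}(\mathfrak{U}/A)$ in a monster model $\mathfrak{U}$ — is spread over infinitely many pairwise disjoint balls properly contained in $\cap\mathfrak{B}$, so that it cannot be covered by the finitely many $X_i$. The argument runs differently according to the shape of $\mathfrak{B}$. If $\mathfrak{B}$ is residual, with closed least ball $B_0=\cap\mathfrak{B}$: the residue class of $c$ in $B_0$ (the maximal open subball of $B_0$ through $c$) cannot have canonical parameter in $\acl^{eq}(A)=A$, for otherwise it would be an $A$-ball strictly inside $\cap\mathfrak{B}$ containing $c$, against genericity; hence it has infinitely many $A$-conjugates, so $c$ has infinitely many $A$-conjugates lying in pairwise distinct residue classes of $B_0$. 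As the residue field is infinite and each $X_i\subsetneq B_0$ lies inside a single residue class, some $A$-conjugate of $c$ avoids $X_1\cup\dots\cup X_n$. If $\mathfrak{B}$ is ramified or immediate, one argues similarly, replacing the residue class by the relevant ``radial'' datum of $c$ (a value, or a $K(A)$-centered ball obtained via the corollaries above) witnessing that $c$ is not pinned inside any $A$-ball below $\cap\mathfrak{B}$: this datum again has infinite $A$-orbit because $A=\acl^{eq}(A)$, the value group being infinite supplies the room, and one uses the analysis of unary types over $\acl^{eq}(A)$ (Propositions~\ref{fortVSFaible} and~\ref{zetaBall}) to certify that the produced conjugates do realize $\tp(c/A)$. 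Compactness then finishes the proof.

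The step I expect to be the main obstacle is this last one, in the non-residual cases: when $\cap\mathfrak{B}$ is not itself a ball one must identify precisely which invariant of $c$ to vary, prove it has infinite $A$-orbit, and check that varying it keeps one inside $\tp(c/A)$ while simultaneously escaping all of the finitely many $X_i$ — the bookkeeping with values and with pointedness of the balls involved being the delicate part. The degenerate case in which $c\in K(A)$, where $\tp(c/A)$ has a single realization, is treated separately.
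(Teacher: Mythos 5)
Your compactness setup and your residual case are essentially the paper's argument, phrased positively (find a conjugate escaping the $X_i$) rather than by contradiction (show the cover forces an $A$-ball inside $\cap\mathfrak{B}$ containing $c$). In the residual case the two presentations amount to the same thing: the residue class of $c$ inside $\min(\mathfrak{B})$ is not $\acl^{eq}(A)$-definable, so its orbit is infinite, and each $X_i$ sits inside a single residue class.

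However, the non-residual case is not carried out, and you yourself flag it as ``the main obstacle'' --- so this is a genuine gap rather than a routine omission. The phrase ``replacing the residue class by the relevant `radial' datum of $c$ (a value, or a $K(A)$-centered ball obtained via the corollaries above)'' does not actually produce an argument: when $\cap\mathfrak{B}$ is not itself a ball, there is no single ambient ball whose residue-class decomposition you can use, the chain need not have any $K(A)$-point when $A$ is an arbitrary algebraically closed imaginary set (the pointing corollaries such as \ref{relevementCentre} assume $A=\acl^{eq}(K(A))$, which is not a hypothesis here), and it is not clear which invariant you want to vary or why its orbit controls coverage by the $X_i$. The paper closes this case with a different and cleaner idea: after arranging (WLOG) that every $X_i$ contains an $A$-conjugate of $c$, the least \emph{closed} ball $X$ containing $\bigcup_i X_i$ coincides with the least closed ball containing the entire $A$-orbit of $c$, hence is $A$-invariant and therefore an $A$-ball. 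Since $\mathfrak{B}$ is not residual, $X$ is not minimal in $\mathfrak{B}$, so the largest open ball $Y\subsetneq X$ through $c$ is again an $A$-ball, which already contains every $X_i$; this contradicts the minimality of $X$. This ``hull'' argument is what your sketch is missing, and it is not obtained by mimicking the residual-case residue-class argument. I would also drop the disjointness and other reductions in your second paragraph: they are harmless but unused, since the hull argument subsumes them.
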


\begin{proof}
Suppose not. Then, by compactness, there exists a finite family $(X_i)_i$ of $B$-balls strictly contained in $\cap\mathfrak{B}$ so that, for all $c'\equiv_A c$, we have $c'\in\bigcup\limits_i X_i$. Replace $\mathcal{M}$ by a $|B|^+$-saturated, strongly $|B|^+$-homogeneous elementary extension. \begin{itemize}
\item Suppose $\mathfrak{B}$ is residual. Let $Y_i$ be the largest open ball containing $X_i$, and strictly contained in $min(\mathfrak{B})$ (the least element of $\mathfrak{B}$). The definable set $\bigcup\limits_i Y_i$ is $A$-invariant, so each $Y_i$ is an $\acl^{eq}(A)=A$-ball, so $c\not\in Y_i$ by $A$-genericity. However, we have $c\in\bigcup\limits_i Y_i$, a contradiction.
\item Suppose $\mathfrak{B}$ is not residual. Without loss of generality, suppose each $X_i$ contains at least one $A$-conjugate of $c$. Let $X$ be the least closed ball that contains all the $X_i$. Then $X$ is the least closed ball that contains every $A$-conjugate of $c$, so $X$ is $A$-invariant, therefore $X$ is an $A$-ball. As $c\in X$, $X\in\mathfrak{B}$ (we can replace $\mathfrak{B}$ by the set of every $A$-ball containing $\cap\mathfrak{B}$, without loss of generality). By hypothesis, $X$ is not the least element of $\mathfrak{B}$, so the largest open ball containing $c$ and strictly contained in $X$ (let us call it $Y$) is an $A$-ball. This contradicts the minimality of $X$, as each $X_i$ is strictly contained in $Y$, so the least closed ball that contains all of them must be (weakly) contained in $Y$.\qedhere
\end{itemize}
\end{proof}

\subsection{Lifts of the residue field}

\begin{definition}
In a valued field $\mathcal{M}$, a \textit{lift of the residue field} is a ring morphism $k(\mathcal{M})\longrightarrow \mathcal{O}_\mathcal{M}$ that is a section of $res$.
\end{definition}

In the Hahn field $k'((t^G))$, the standard lift maps an element of $k'$ to the corresponding constant polynomial.

\begin{proposition}[\cite{vandenDries2014}, subsection 2.4]\label{extensionLift}
Let $\mathcal{M}\models HF_{0, 0}$. Then there is a canonical 1-1 correspondence between the lifts of the residue field in $\mathcal{M}$ and the maximal proper subfields of $\mathcal{O}_\mathcal{M}$.
\par Moreover, if $\mathcal{N}\models HF_{0, 0}$, $f\ :\ \mathcal{N}\longrightarrow\mathcal{M}$ is an embedding, and $l$ is a lift of the residue field in $\mathcal{N}$, then there exists at least one lift of the residue field $l'$ in $\mathcal{M}$ so that $l'$ extends $f^{-1} l f\ :\ f(k(\mathcal{N}))\longrightarrow\mathcal{O}_\mathcal{M}$.
\end{proposition}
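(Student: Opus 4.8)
The plan is to derive both assertions from the classical algebra of maximal immediate extensions and Hensel's lemma in residue characteristic zero. For the first assertion, given a lift $l\colon k(\mathcal{M})\to\mathcal{O}_\mathcal{M}$, its image $l(k(\mathcal{M}))$ is a subfield of $\mathcal{O}_\mathcal{M}$ on which $res$ restricts to an isomorphism onto $k(\mathcal{M})$; in particular it meets every residue class, so it is not contained in any proper subfield $F\subsetneq\mathcal{O}_\mathcal{M}$ with $res(F)=k(\mathcal{M})$, and since any subfield of $\mathcal{O}_\mathcal{M}$ properly containing it would have to contain an element of $\mathfrak{M}_\mathcal{M}\setminus\{0\}$, hence a unit of $\mathcal{O}_\mathcal{M}$ that is not a unit — wait, rather: a subfield $F$ of $\mathcal{O}_\mathcal{M}$ is proper iff $F\cap\mathfrak{M}_\mathcal{M}=0$, i.e. iff $res|_F$ is injective, and $F$ is maximal such iff $res|_F$ is also surjective, i.e. iff $F$ is the image of a lift. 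This gives the bijection $l\mapsto l(k(\mathcal{M}))$, with inverse sending a maximal proper subfield $F$ to $(res|_F)^{-1}$. I would spell out that maximality of $F$ forces surjectivity of $res|_F$: if some $\bar a\in k(\mathcal{M})$ were not hit, pick $a\in\mathcal{O}_\mathcal{M}$ with $res(a)=\bar a$; since the residue characteristic is zero and $k(\mathcal{M})$ has characteristic zero, $\bar a$ is transcendental or algebraic over $res(F)=res(F)$, and in either case one can use Hensel to adjust $a$ inside its residue class so that $F(a)$ is still a proper subfield of $\mathcal{O}_\mathcal{M}$ (transcendental case: $F(a)$ meets $\mathfrak{M}$ trivially because valuations of rational functions over a field with trivial valuation on $F$ and $val(a)=0$ are controlled; algebraic case: lift the minimal polynomial and use Hensel in residue characteristic $0$ to find a root in $\mathcal{O}_\mathcal{M}$ in the right residue class).

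For the second assertion, let $f\colon\mathcal{N}\to\mathcal{M}$ be an embedding and $l$ a lift in $\mathcal{N}$, so that $f\circ l\circ f^{-1}$, restricted to $f(k(\mathcal{N}))\subseteq k(\mathcal{M})$, is a partial lift defined over a subfield of the residue field. Its image $F_0:=f(l(k(\mathcal{N})))$ is a subfield of $\mathcal{O}_\mathcal{M}$ with $F_0\cap\mathfrak{M}_\mathcal{M}=0$. By Zorn's lemma, enlarge $F_0$ to a maximal proper subfield $F$ of $\mathcal{O}_\mathcal{M}$; the chain condition is satisfied because a union of proper subfields each meeting $\mathfrak{M}_\mathcal{M}$ trivially again meets $\mathfrak{M}_\mathcal{M}$ trivially, so is still a proper subfield. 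By the first part, $F$ is the image of a lift $l'$ of the residue field in $\mathcal{M}$, and by construction $l'$ extends $(res|_{F_0})^{-1}=f\circ l\circ f^{-1}$ on $f(k(\mathcal{N}))$.

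The main obstacle I expect is the surjectivity step in the first assertion, i.e. checking that a \emph{maximal} proper subfield of $\mathcal{O}_\mathcal{M}$ really does surject onto $k(\mathcal{M})$ under $res$ — this is where residue characteristic zero is essential, via Hensel's lemma, and where one has to be a little careful to treat separately the case where the missing residue element is algebraic over the residue of the current subfield (lift a monic representative of its minimal polynomial and apply Hensel, which works because the derivative does not vanish in characteristic zero) and the case where it is transcendental (then any lift $a$ with $val(a)=0$ generates a purely transcendental extension on which the valuation is trivial, so $F(a)$ is still proper). Everything else — the Zorn's lemma argument, the fact that proper subfields of $\mathcal{O}_\mathcal{M}$ are exactly those meeting $\mathfrak{M}_\mathcal{M}$ trivially, and the compatibility with $f$ — is routine. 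Since this is cited from \cite{vandenDries2014}, I would in practice just invoke that reference, but the above is the self-contained argument.
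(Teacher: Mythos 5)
The paper gives no proof of this proposition; it only cites van den Dries's notes. Your reconstruction is the standard argument from that reference: characterize subfields of $\mathcal{O}_\mathcal{M}$ as exactly those subfields of $K(\mathcal{M})$ on which the valuation (hence $\mathrm{res}$) is injective, show that maximality forces surjectivity of $\mathrm{res}$ by lifting (Hensel in the algebraic case, trivial-valuation in the transcendental case, both using residue characteristic zero), and get the extension statement by Zorn. This is correct and is essentially the same route as the cited source, so there is nothing to compare. One very minor cosmetic point: you say "a subfield $F$ of $\mathcal{O}_\mathcal{M}$ is proper iff $F\cap\mathfrak{M}_\mathcal{M}=0$", but in fact every subfield of $\mathcal{O}_\mathcal{M}$ automatically meets $\mathfrak{M}_\mathcal{M}$ only in $0$ (a nonzero element of positive value in a field would have an inverse of negative value, outside $\mathcal{O}_\mathcal{M}$), so "proper" is just a redundant qualifier; your corrected formulation mid-paragraph lands on the right characterization and the rest of the argument is sound.
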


\begin{corollary}
In any valued field $\mathcal{M}\models HF_{0, 0}$, the residue field can be lifted.
\end{corollary}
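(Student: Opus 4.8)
The plan is to derive this immediately from Proposition~\ref{extensionLift}, using the trivially-valued prime field as a base from which to extend a lift. First I would observe that the prime subfield $\mathbb{Q}$ of $K(\mathcal{M})$ carries the trivial valuation as a subfield of $\mathcal{M}$: for any nonzero integer $n$ we have $res(n)\ne 0$ in $k(\mathcal{M})$ since $k(\mathcal{M})$ has characteristic zero, hence $val(n)=0$, so $val$ vanishes on $\mathbb{Q}^*$. In particular $\mathbb{Q}\subset\mathcal{O}_\mathcal{M}$, and the restriction of $res$ to $\mathbb{Q}$ is the identity onto the prime subfield of $k(\mathcal{M})$. (This can also be read off from the Corollary above, as $\mathbb{Q}\subset K(\acl(\emptyset))\subset\mathbb{Q}^{alg}$ and the latter is trivially valued.)

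Next I would view $\mathbb{Q}$, equipped with the trivial valuation, as a model $\mathcal{N}\models HF_{0,0}$: it is Henselian, being trivially valued, and has residue characteristic zero, with $\mathcal{O}_\mathcal{N}=k(\mathcal{N})=\mathbb{Q}$ and with the identity map $l\colon k(\mathcal{N})\longrightarrow\mathcal{O}_\mathcal{N}$ as a (trivial) lift of its residue field. By the previous paragraph, the inclusion $f\colon\mathcal{N}\longrightarrow\mathcal{M}$ is an embedding of valued fields. Applying the second part of Proposition~\ref{extensionLift} to $f$ and $l$ then yields a lift $l'$ of the residue field in $\mathcal{M}$ (one that in addition extends the inclusion $\mathbb{Q}\hookrightarrow\mathcal{O}_\mathcal{M}$), which is exactly what is claimed.

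There is essentially no obstacle here beyond checking the hypotheses of Proposition~\ref{extensionLift}; the one point deserving a word is precisely that $\mathbb{Q}$ inherits the trivial valuation from $\mathcal{M}$, since this is what makes $f$ an embedding of valued fields and $\mathcal{N}$ a genuine model of $HF_{0,0}$. Alternatively, one could argue directly from the first part of the proposition: the poset of subfields of the ring $\mathcal{O}_\mathcal{M}$ is closed under unions of chains, so by Zorn's lemma it has a maximal element, which by the stated correspondence corresponds to a lift. I would nonetheless prefer the extension argument, as it avoids having to re-establish that a maximal subfield of $\mathcal{O}_\mathcal{M}$ surjects onto $k(\mathcal{M})$.
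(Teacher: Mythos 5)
Your proposal is correct and is exactly the paper's argument: apply the second part of Proposition~\ref{extensionLift} to the canonical embedding of the trivially-valued Henselian field $\mathbb{Q}$, equipped with its identity lift. The extra verifications you supply (that $\mathbb{Q}$ inherits the trivial valuation) and the Zorn's lemma alternative are fine but beyond what the paper records.
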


\begin{proof}
Apply the proposition to the (canonical) embedding into $\mathcal{M}$ whose domain is the trivially valued field $\mathbb{Q}$ (which is of course Henselian).
\end{proof}

\begin{remark}
If $\mathcal{M}\models HF_{0, 0}$, $a\in K(\mathcal{M})$, and $\alpha\in k(\mathcal{M})$, then the following conditions are equivalent:
\begin{itemize}
\item There exists a lift of the residue field sending $\alpha$ to $a$.
\item $\alpha$ and $a$ have the same ideal over $\mathbb{Q}$.
\end{itemize}
\end{remark}

\begin{lemma}\label{relevementRayon1}
Let $\mathcal{M}\models HF_{0, 0}$. Suppose $ac$ and $s$ exist in $\mathcal{M}$. Let $l$ be a lift of the residue field. Let $a\in K(\mathcal{M})^*$ so that $val(a)\neq 0$. Then $a\equiv_\emptyset l(ac(a))s(val(a))$.
\end{lemma}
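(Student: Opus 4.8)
The plan is to show that $a$ and $l(ac(a))s(val(a))$ have the same type over $\emptyset$ by reducing to Lemma \ref{valeursLibres}, using the fact that both elements have the same $rv$ and that a single value generates a $\mathbb{Q}$-free family as long as it is nonzero. First I would set $b = l(ac(a))s(val(a))$ and observe that, since $s$ is a group section of $val$, we have $val(s(val(a))) = val(a)$; and since $l$ is a section of $res$ with $l(ac(a)) \in \mathcal{O}_{\mathcal{M}}^*$ (because $ac(a) \in k^*$), we get $val(l(ac(a))) = 0$. Hence $val(b) = val(a)$. For the residue/$rv$ part: writing $a = u \cdot s(val(a))$ with $u \in \mathcal{O}_{\mathcal{M}}^*$, one has $ac(a) = res(u)$ by definition of $ac$, so $res(a/s(val(a))) = res(u) = ac(a) = res(l(ac(a)))$, which together with $val(b) = val(a)$ gives $rv(a) = rv(b)$: indeed $a/b = u / l(ac(a))$ has residue $1$, so lies in $1 + \mathfrak{M}_{\mathcal{M}}$, i.e. $rv(a) = rv(b)$.

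Now, since $val(a) \neq 0$, the one-element family $(val(a))$ is $\mathbb{Q}$-free in $\mathbb{Q} \otimes \Gamma(\mathcal{M})$. We have just checked $rv(a) = rv(b)$, so Lemma \ref{valeursLibres} (with $n = 1$) applies and yields $a \equiv_\emptyset b$, which is exactly the statement.

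I expect the only real subtlety to be the bookkeeping in the second paragraph: one must be careful that the maps $ac$ and $s$ are \emph{group} homomorphisms (multiplicative on $K^*$, additive-to-multiplicative on $\Gamma$), that $ac$ agrees with $res$ on $\mathcal{O}_{\mathcal{M}}^*$, and that $l$ being a \emph{ring} morphism section of $res$ forces $res(l(\alpha)) = \alpha$ for all $\alpha \in k$. None of this is deep, but it is where an error could creep in. Note that the hypothesis $val(a) \neq 0$ is used in an essential way — it is precisely what makes Lemma \ref{valeursLibres} applicable; when $val(a) = 0$ the statement would be false in general, since then $rv(a)$ alone does not determine $\tp(a/\emptyset)$.
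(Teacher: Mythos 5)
Your proof is correct and follows the same route as the paper's: define $b=l(ac(a))s(val(a))$, check $rv(a)=rv(b)$, and invoke Lemma \ref{valeursLibres} with $n=1$ using $val(a)\neq 0$. The only cosmetic difference is that you verify $rv(a)=rv(b)$ by a direct residue computation, whereas the paper cites the group isomorphism $\RV\cong k^*\times\Gamma$ induced by $ac$ and $s$.
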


\begin{proof}
Let $b=l(ac(a))s(val(a))$. We have $ac(a)=ac(b)$ and $val(a)=val(b)$, so $rv(a)=rv(b)$ (as $ac$ and $s$ exist, we have a group isomorphism between $\RV(\mathcal{M})$ and $k(\mathcal{M})^*\times\Gamma(\mathcal{M})$). Moreover, as $val(a)\neq 0$, we can apply \ref{valeursLibres} to get $a\equiv_\emptyset b$.
\end{proof}

\begin{proposition}\label{relevementRayon2}
Let $\mathcal{M}\models HF_{0, 0}$, and $A=\acl^{eq}(K(A))\subset\mathcal{M}^{eq}$. Suppose the following conditions hold:
\begin{itemize}
\item $ac$ and $s$ exist in $\mathcal{M}$.
\item For every $\gamma\in\Gamma(\dcl(\emptyset))$, there exists an $a\in K(A)$ so that $val(a)=\gamma$.
\item For every $N>0$, $\alpha\in ac(K(A))$, there exists $a\in \mathcal{O}^*_A$ so that \\$res(a)\mod (k^*)^N=\alpha\mod (k^*)^N$.
\end{itemize}
Then, for every $\delta\in\Gamma(A)$, there exists an $a\in K(A)$ so that $val(a)=\delta$.
\end{proposition}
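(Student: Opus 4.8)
The goal is to show that the image of $K(A)$ under $val$ is all of $\Gamma(A)$, given that it contains $\Gamma(\dcl(\emptyset))$ and given the control on $ac(K(A))$ modulo powers. The plan is to fix $\delta \in \Gamma(A)$ and build the desired $a \in K(A)$ of value $\delta$ by approximating $\delta$ by values of elements of $K(A)$ and correcting the error using the hypotheses. First I would invoke Proposition~\ref{stablementPlonge}: since $A = \acl^{eq}(K(A))$, the element $\delta$, being $A$-definable (as a singleton), has a type over $\Gamma(A)$ in the pure ordered group that determines its type over $A$ in $\mathcal{M}$; more usefully, $\Gamma(A)$ is a pure ordered subgroup and one expects $\delta$ to be, up to the ordered-group structure, "already visible" through $val(K(A))$. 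The real content is that $val(K(A))$ is a subgroup of $\Gamma(A)$, and I want to see it is not a proper one.

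Here is the mechanism I would use. Let $c \in K(\mathcal{M})$ be a point that is $A$-generic of the appropriate chain, or more directly: by Corollary~\ref{dominationValeurCorpique}, a value $\delta$ is determined over $A$ by its cut over $\Gamma(A)$, hence the type of any element of value $\delta$ over $A$ depends only on that cut together with the residue data coming from $ac$. The hypotheses provide: (i) every $\gamma$ in the prime-model value group is hit by $K(A)$; (ii) the mod-$N$-th-powers classes in $ac(K(A))$ exhaust those of $k(A)$. Now the point is that $s(\delta)$ lies in $K(\mathcal{M})$ (since $s$ exists), but not a priori in $K(A)$. However, $\delta \in \Gamma(A)$ means $s(\delta)$ is interalgebraic with $\delta$ over $A$ in the Pas language? — careful, $s$ and $ac$ are \emph{not} in the language. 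So instead I would argue as follows: take $a \in K(\mathcal{M})$ with $val(a) = \delta$ (it exists in $\mathcal{M}$, e.g.\ $a = s(\delta)$). I want to replace $a$ by an element of $K(A)$. Since $A = \acl^{eq}(K(A))$, it suffices to find $a'$ with $a' \equiv_{K(A)} a$ and $a' \in K(A)$; equivalently, to show $\tp(a/K(A))$ is realized in $K(A)$, i.e.\ is "algebraic" — but $val$ being injective on a $\mathbb{Q}$-free family (Lemma~\ref{valeursLibres}) is what pins down finitely much of the type.

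Concretely, the step I would carry out in detail is a descent on how "new" $\delta$ is relative to $val(K(A))$. Write $\Delta = val(K(A))$, a subgroup of $\Gamma(A)$. Suppose $\delta \notin \Delta$. Consider $\Delta' = \Delta + \mathbb{Z}\delta$. By hypothesis (i), $\Delta$ contains $\Gamma(\dcl(\emptyset))$, so in particular $\Delta$ is nontrivial in general, but the key is that $\delta$ has finite or infinite order modulo $\Delta$; I would use the finiteness of $[k^* : (k^*)^N]$ from the standing assumptions together with hypothesis (iii) to show that $n\delta \in \Delta$ for some $n > 0$ forces $\delta \in \Delta$, by extracting an $n$-th root in $K(A)$ using Hensel's lemma — this is where residue characteristic zero and (iii) combine: if $b \in K(A)$ has $val(b) = n\delta$, then $b/a^n \in \mathcal{O}^*$, and adjusting $a$ within its value class by an element of $\mathcal{O}^*_A$ supplied by (iii) one arranges $b/a^n \in 1 + \mathfrak{M}$, whence $a \in K(A)^{alg} \cap K(\mathcal{M}) = K(A)$ (the last equality being the corollary to Proposition~\ref{flenner}), contradiction; and if $\delta$ has infinite order mod $\Delta$, one gets a genuinely new value in $\Gamma(A)$, which is visible in the pure ordered group $\Gamma(A)$ as a fresh element of a $\mathbb{Q}$-basis, contradicting that $\Gamma(A)$ is the value group of the algebraically closed set $A = \acl^{eq}(K(A))$ — I would cash this out via Lemma~\ref{valeursLibres} and Proposition~\ref{stablementPlonge} to see that such a $\delta$ cannot be in $\dcl^{eq}$ of $K(A)$. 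The main obstacle, and the step deserving the most care, is the torsion case: ensuring that an $n$-th root of a value in $\Delta$ is \emph{itself} realized in $K(A)$ and not merely in $K(A)^{alg}$, which is exactly what hypothesis (iii) together with Henselianity is designed to give — one must check that the correction by $ac$ can be done with a genuine element of $\mathcal{O}^*_A$ and that the resulting root lands back in $K(A)$ by the algebraic-closure computation, rather than merely in a finite extension.
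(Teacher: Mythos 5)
Your plan follows the same strategy as the paper's proof in broad strokes: reduce to the case that $\delta$ is torsion modulo $val(K(A))$ (up to the prime-model contribution), then produce an $N$th root inside $K(A)$ using hypothesis (iii), a lift of the residue field (equivalently Henselianity in residue characteristic zero), and the relative algebraic closedness of $K(A)$. The torsion case you sketch is essentially correct, though the bookkeeping should be tidied: what you should adjust by a unit of $\mathcal{O}^*_A$ is the element $b\in K(A)$ of value $N\delta$ (to make $ac(b)$ an $N$th power), not the abstract element $a=s(\delta)\notin K(A)$; the $N$th root of the adjusted $b$ then lands in $K(A)^{alg}\cap K(\mathcal{M})=K(A)$ and has value $\delta$.

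There is, however, a genuine gap in the reduction step: you need to know that $\delta$ \emph{must} be torsion modulo $val(K(A))+\Gamma(\dcl(\emptyset))$, i.e.\ that the ``infinite order'' case cannot arise. You try to dismiss it by appealing to Lemma~\ref{valeursLibres} and Proposition~\ref{stablementPlonge}, but neither does the job. Lemma~\ref{valeursLibres} is a statement about tuples of \emph{field} elements with $\mathbb{Q}$-free values, and tells you nothing about when a value lies in $\dcl$ of a set of values inside the ordered group sort; and Proposition~\ref{stablementPlonge} only transports the problem into the pure ordered group $\Gamma(\mathcal{M})$ (showing $\delta\in\dcl(val(K(A)))$ there), without computing what that definable closure actually is. The missing ingredient is the structural fact about ordered abelian groups that the paper invokes (\cite{QEOAG}, Corollary~1.10): the definable closure of a subset $S\subset\Gamma$ in the pure ordered group is the relative divisible closure of the subgroup generated by $S$ together with $\Gamma(\dcl(\emptyset))$. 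Without this (or an equivalent argument), one cannot rule out that $\delta$ is some more exotic definable function of $val(K(A))$ over which it has infinite order. Once you import that result, hypothesis (ii) folds $\Gamma(\dcl(\emptyset))$ into $val(K(A))$, and your torsion case carries the rest.
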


In particular, the radius of every $A$-ball can be written $val(a)$ for some $a\in K(A)$.

\begin{proof}
Let $\delta\in\Gamma(A)$. Then $\delta$ is $K(A)$-definable, so by \ref{stablementPlonge} it is actually $val(K(A))$-definable. By (\cite{QEOAG}, Corollary 1.10), the definable closure of $val(K(A))$ in the ordered group $\Gamma(\mathcal{M})$ is exactly the relative divisible closure of the subgroup of $\Gamma(\mathcal{M})$ generated by $val(K(A))$ and $\Gamma(\dcl(\emptyset))$. In other words, there exist $\gamma\in\Gamma(\dcl(\emptyset))$, $a\in K(A)$, and $N>0$ so that $\delta=\frac{val(a)+\gamma}{N}$. By hypothesis, there exists $a'\in K(A)$ so that $val(a')=\gamma$.
\par Let $\bar{a}\in\mathcal{O}^*_A$ so that $res(\bar{a})\mod (k^*)^N=ac(aa')\mod (k^*)^N$. Let $a''=aa'\bar{a}^{-1}$. We have $a''\in K(A)$ and $val(a'')=val(a)+\gamma$. We will conclude the proof by showing that $a''$ has an $N$th-root in $K(\mathcal{M})$, this root will have value $\delta$, and it will also belong to $K(A)$, as $A=\acl(A)$.
\par We have $ac\left(a''\right)=\frac{ac(aa')}{res(\bar{a})}\in (k^*)^N$. Let $r'\in k^*$ so that $r'^N=ac(a'')$, and let $l$ be a lift of the residue field. We have $(l(r'))^N=l(ac(a''))$, so $l(ac(a''))$ has an $N$th-root in $K(\mathcal{M})$. As a result, $l(ac(a''))s(val(a''))$ also has an $N$th-root in $K(\mathcal{M})$, namely $l(r')s(\delta)$. By \ref{relevementRayon1}, $l(ac(a''))s(val(a''))\equiv_\emptyset a''$, so $a''$ also has an $N$th-root in $K(\mathcal{M})$, and we can conclude.
\end{proof}


\section{The strength of the residue field's stable embeddedness}\label{sectStPl}

In the Proposition \ref{stablementPlonge}, one cannot help but notice that the result on definable subsets of $k$ is weaker than that on $\Gamma$. In particular, we don't have an analogue of \ref{dominationValeurCorpique} in $k$. This property will sometimes be needed for our results on unary forking, so we would like to understand when it holds. In this section, we will show a counterexample in which this analogue does not hold, and we will give sufficient conditions to obtain the same nice behavior as what we have in $\Gamma$. The strategy for the proofs is to work in an elementary extension that is a Hahn field, where the specific structure will give us more tools to build elementary maps.
\par We invite the reader to recall what we said in the Remark \ref{precisionNotations} to not get confused in the notations. For instance, in the Definition \ref{defContreEx}, we have an explicit field $k'$, but we use the notation $k$ to define some definable subset $X$ of $k'$, because $X$ has points in elementary extensions of $k'$ that are not in $k'$.

\subsection{Algebraic technicalities}
In order to build our example and prove our version of \ref{dominationValeurCorpique} in $k$, we first need to state simple field-theoretic facts.

\begin{definition}\label{defAut}
Let $G$ be an ordered Abelian group, and $k'$ a field of characteristic zero. We will define two (injective) group homomorphisms:
$$
\begin{array}{cccc}
aut^1_{k', G}\ :\ & Aut(k') &\longrightarrow & Aut(k'((t^G)))\\
aut^2_{k', G}\ :\ & Hom(G, k'^*) & \longrightarrow & Aut(k'((t^G)))
\end{array}
$$
The automorphisms of $k'((t^G))$ we consider are of course valued field automorphisms.
\par For every $\sigma\in Aut(k')$, $aut^1_{k', G}(\sigma)$ is defined as $\sum\limits_\alpha a_\alpha t^\alpha\longmapsto\sum\limits_\alpha \sigma(a_\alpha)t^\alpha$.
\par For every $\sigma\in Hom(G, k'^*)$, $aut^2_{k', G}(\sigma)$ is defined as $\sum\limits_\alpha a_\alpha t^\alpha\longmapsto\sum\limits_\alpha a_\alpha \sigma(\alpha) t^\alpha$.
\end{definition}

The map $aut^1_{k', G}$ is classical, and the map $aut^2_{k', G}$ is denoted $P$ in the Definition 3.3.5 of the paper \cite{KUHLMANN2022339}, to which we refer the interested reader for a more elaborate study of automorphisms of Hahn fields (and some of their subfields).

\begin{lemma}\label{normalisateurDiv}
Let $G$ be a divisible group, and $g\in G$. Then there exists $\sigma\in Hom(\mathbb{Q}, G)$ so that $\sigma(1)=g$.
\end{lemma}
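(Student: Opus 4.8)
The statement asserts that if $G$ is a divisible abelian group and $g \in G$, then there is a homomorphism $\sigma \colon \mathbb{Q} \to G$ with $\sigma(1) = g$. The plan is to construct $\sigma$ directly by specifying its values on each rational $\frac{p}{q}$ and checking that this is forced and well-defined. First I would observe that any homomorphism $\sigma \colon \mathbb{Q} \to G$ is determined by its values on the generators $\frac{1}{q}$, $q \geq 1$, subject to the coherence conditions $q \cdot \sigma\!\left(\tfrac{1}{q}\right) = \sigma(1) = g$ and, more generally, $\sigma\!\left(\tfrac{1}{qr}\right)$ being an $r$-th "division" of $\sigma\!\left(\tfrac{1}{q}\right)$. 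So the real task is to choose, compatibly over all $q$, elements $h_q \in G$ with $q \cdot h_q = g$ and $r \cdot h_{qr} = h_q$.

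The key step is to use divisibility of $G$ to realize this compatible system. I would enumerate the positive integers and build the $h_q$ by a diagonal/inductive procedure along divisibility: set $h_1 = g$; having chosen $h_q$, for each $r$ pick (using that $G$ is divisible) some $h_{qr}$ with $r \cdot h_{qr} = h_q$, which automatically gives $(qr)\cdot h_{qr} = q \cdot h_q = g$. To make this genuinely consistent over the whole lattice of positive integers ordered by divisibility, the cleanest route is to pass through a cofinal chain: fix the chain $1 \mid 2 \mid 2\cdot3 \mid 2\cdot3\cdot4 \mid \cdots$, i.e. the factorials $n!$, choose $h_{n!}$ inductively with $(n{+}1)\cdot h_{(n+1)!} = h_{n!}$ using divisibility, and then for an arbitrary $q$ pick $n$ with $q \mid n!$, say $n! = q m$, and set $h_q := m \cdot h_{n!}$. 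One checks $q \cdot h_q = (qm)\cdot h_{n!} = n!\cdot h_{n!}$, and a short induction along the factorial chain shows $n! \cdot h_{n!} = g$ for all $n$; one also checks this value of $h_q$ is independent of the choice of $n$ (any two choices are compared through a common larger factorial). Finally define $\sigma\!\left(\tfrac{p}{q}\right) := p \cdot h_q$ for $q \geq 1$, $p \in \mathbb{Z}$; verify this is well-defined (if $\tfrac{p}{q} = \tfrac{p'}{q'}$ then $p q' = p' q$, and both sides equal $p p' \cdot h_{qq'}$ after expressing $h_q, h_{q'}$ through $h_{qq'}$), additive, and sends $1 = \tfrac{1}{1}$ to $h_1 = g$.

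The main obstacle is purely bookkeeping: ensuring the chosen division elements form a coherent system rather than a collection of incompatible local choices, which is exactly why I route everything through the single cofinal chain of factorials so that all compatibility reduces to one linearly-ordered induction. There is nothing deep here beyond the definition of divisibility; alternatively one could quote that $\mathbb{Q}$ is projective/free in no useful sense but that divisible groups are precisely the injective $\mathbb{Z}$-modules, embed $\mathbb{Z} \hookrightarrow \mathbb{Q}$, extend the map $\mathbb{Z} \to G$, $1 \mapsto g$, along this inclusion to $\mathbb{Q} \to G$ by injectivity, and be done in one line — but since the paper is aiming to be self-contained, I would prefer the explicit construction sketched above.
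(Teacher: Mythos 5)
Your construction is essentially the same as the paper's: both build a coherent factorial chain of iterated divisions ($h_{n!}$ here, $g_n$ there, related by $n!\cdot h_{n!}=g$, i.e. $g_n^{n!}=g$ in the paper's multiplicative notation) and then define $\sigma(n/m)$ via that chain, differing only in bookkeeping presentation (you introduce $h_q$ for all $q$ as an intermediate step; the paper defines $f(n,m)=g_m^{n(m-1)!}$ directly and checks it factors through $\mathbb{Z}\times\mathbb{Z}_{>0}\twoheadrightarrow\mathbb{Q}$). The one-line injective-module alternative you mention in passing is also correct, but your preferred explicit route matches the paper's.
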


\begin{proof}
Let $g_1=g$. By induction on $n> 0$, choose $g_{n+2}\in G$ an $(n+2)$th root of $g_{n+1}$ (ie $g_{n+2}^{n+2}=g_{n+1}$). In particular, $g_{n+1}$ is always an $((n+1)!)$th root of $g$. For each $(n, m)\in\mathbb{Z}\times\mathbb{Z}_{>0}$, define $f(n, m)=(g_m)^{n(m-1)!}$. By straightforward, but rather long computations, one can show that $f$ factors through the canonical surjection $\mathbb{Z}\times\mathbb{Z}_{>0}\longrightarrow\mathbb{Q}$, and that the induced map $\mathbb{Q}\longrightarrow G$ is a group homomorphism sending $1$ to $g$.
\end{proof}

\begin{lemma}\label{residuQuadTransc}
Let $k'$ be a field of characteristic zero, and $n>0$. Then, in the field $k'(u)$ ($u$ being any transcendental element), $\frac{u}{u+n}$ is not a square.
\end{lemma}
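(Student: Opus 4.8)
In $k'(u)$ with $u$ transcendental over a characteristic-zero field $k'$ and $n>0$, the element $\frac{u}{u+n}$ is not a square.

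I would argue directly in the UFD $k'[u]$, after clearing denominators. Suppose toward a contradiction that $\frac{u}{u+n} = \left(\frac{P}{Q}\right)^2$ with $P, Q \in k'[u]$ coprime. Then $u\,Q^2 = (u+n)\,P^2$ in $k'[u]$. The polynomials $u$ and $u+n$ are coprime (their difference is the nonzero constant $n$, which is a unit since $k'$ has characteristic zero), so $u \mid P^2$, hence $u \mid P$, and similarly $(u+n) \mid Q$. Writing $P = u P_1$ and $Q = (u+n) Q_1$ and substituting, one gets $u (u+n)^2 Q_1^2 = (u+n) u^2 P_1^2$, i.e.\ $(u+n) Q_1^2 = u P_1^2$. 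But now $\gcd(P,Q)=1$ forces $\gcd(P_1, Q_1)=1$ and moreover $u \nmid P_1$, $(u+n)\nmid Q_1$, yet the same divisibility argument applied to $(u+n)Q_1^2 = u P_1^2$ gives $u \mid P_1$, a contradiction. Alternatively, and more cleanly, I would just compare degrees: from $u Q^2 = (u+n) P^2$ with $P,Q$ coprime, $u \mid P$ and $u+n \mid Q$ as above, so $\deg Q \ge 1 + \deg P_1$-type bookkeeping forces $\deg(u Q^2) = 1 + 2\deg Q$ and $\deg((u+n)P^2) = 1 + 2\deg P$ to be equal, hence $\deg P = \deg Q$; combined with $u \mid P$ and $(u+n)\mid Q$ and coprimality one derives that $P$ and $Q$ must both be constants times $u$ and $u+n$ respectively up to the coprime core, and then a leading-coefficient comparison in $uQ^2=(u+n)P^2$ gives an immediate contradiction.

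The slickest route, which is probably what the paper intends, is a valuation/localization argument: localize $k'[u]$ at the prime ideal $(u)$, obtaining a DVR with uniformizer $u$. In this DVR, $u+n$ is a unit (since $n \ne 0$), so $v_{(u)}\!\left(\frac{u}{u+n}\right) = v_{(u)}(u) - v_{(u)}(u+n) = 1 - 0 = 1$, which is odd. A square always has even valuation at every discrete valuation, so $\frac{u}{u+n}$ cannot be a square in $k'(u)$. This is short, uses only that $k'$ has characteristic zero (so that $n$ is a unit, equivalently $u \ne u+n$ as elements and $u+n \notin (u)$), and needs no explicit polynomial manipulation.

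There is essentially no obstacle here; the only point requiring the characteristic-zero hypothesis is ensuring $n \neq 0$ in $k'$, so that $u + n$ is genuinely a unit in the localization at $(u)$ (equivalently, that $u$ and $u+n$ are distinct irreducibles). I would present the valuation argument as the main proof and perhaps remark that the coprime-factorization argument in $k'[u]$ gives the same conclusion by elementary means. The plan is therefore: fix the discrete valuation $v_{(u)}$ on $k'(u)$ associated to the prime $(u)$; observe $v_{(u)}(u+n)=0$ because $n$ is a nonzero scalar; conclude $v_{(u)}\!\left(\frac{u}{u+n}\right)=1$; and invoke the fact that any square in a field equipped with a discrete valuation has even value, yielding the contradiction.
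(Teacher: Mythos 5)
Your preferred valuation argument is essentially identical to the paper's proof: the paper compares the parity of the exponent of $u$ in the unique factorizations of $Q^2\cdot u$ (odd) and $P^2\cdot(u+n)$ (even), which is exactly the statement that $v_{(u)}$ of a square must be even while $v_{(u)}\bigl(\frac{u}{u+n}\bigr)=1$. (One small slip in your first alternative: from $(u+n)Q_1^2 = u P_1^2$ the ``same divisibility argument'' actually yields $u\mid Q_1$, not $u\mid P_1$, but since $u\mid P$ this still contradicts $\gcd(P,Q)=1$.)
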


\begin{proof}
If $\frac{u}{u+n}$ were a square, then there would exist $P$, $Q\in k'[u]$ so that $Q\neq 0$ and $Q^2\cdot u=P^2\cdot (u+n)$. Now, $k'[u]$ is a UFD where $u$ and $u+n$ are coprime and irreducible. As a result, $u$ appears an even amount of times in the factorization of $P^2\cdot (u+n)$ in $k'[u]$, and it appears an odd amount of times in that of $Q^2\cdot u$, so we cannot have $Q^2\cdot u=P^2\cdot (u+n)$.
\end{proof}

\begin{lemma}
Let $k'$ be an algebraically-closed field of characteristic zero, $1\neq\lambda\in\mathbb{Q}_{>0}$, and $P$, $Q$ coprime non-zero polynomials in $k'[u]$. If $\frac{P(u)}{Q(u)}=\frac{P(\lambda u)}{Q(\lambda u)}$ in $k'(u)$, then both $P$ and $Q$ are constant.
\end{lemma}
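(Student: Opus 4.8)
The statement is: over an algebraically closed field $k'$ of characteristic zero, if $\lambda \in \mathbb{Q}_{>0}$ with $\lambda \neq 1$ and $P, Q \in k'[u]$ are coprime nonzero polynomials with $\frac{P(u)}{Q(u)} = \frac{P(\lambda u)}{Q(\lambda u)}$, then $P$ and $Q$ are constant. The plan is to analyze how the substitution $u \mapsto \lambda u$ acts on the factorization of $P$ and $Q$ into linear factors, which is where algebraic closedness enters.

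First I would clear denominators: the hypothesis $\frac{P(u)}{Q(u)} = \frac{P(\lambda u)}{Q(\lambda u)}$ gives $P(u)Q(\lambda u) = P(\lambda u)Q(u)$ as an identity in $k'[u]$. Since $P$ and $Q$ are coprime, $P(u)$ divides $P(\lambda u)$; comparing degrees (substitution $u \mapsto \lambda u$ preserves degree) and leading coefficients, we get $P(\lambda u) = \lambda^{\deg P} P(u)$, and likewise $Q(\lambda u) = \lambda^{\deg Q} Q(u)$. So it suffices to show that a nonzero polynomial $P \in k'[u]$ satisfying $P(\lambda u) = c\, P(u)$ for some constant $c$ (necessarily $c = \lambda^{\deg P}$) must be a monomial $P(u) = a u^m$; applying this to both $P$ and $Q$ and using coprimality will force $m = 0$ for at least one of them, and then the monomial relation plus coprimality forces both to be constant.

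To prove the monomial claim, factor $P(u) = a \prod_{i} (u - r_i)$ over $k'$ (here algebraic closedness is used). Then $P(\lambda u) = a \lambda^{\deg P} \prod_i (u - r_i/\lambda)$, so the multiset of roots of $P$ is invariant under $r \mapsto r/\lambda$. Any nonzero root $r$ would then generate an infinite orbit $\{r, r/\lambda, r/\lambda^2, \dots\}$ inside the finite root multiset — using that $\lambda \neq 1$ and $\lambda > 0$ (so $\lambda$ is not a root of unity, indeed $\lambda^n \neq 1$ for all $n \geq 1$) — which is a contradiction. Hence every root is $0$, i.e. $P(u) = a u^m$. I expect the main (though still routine) obstacle to be the bookkeeping around degrees and leading coefficients when passing from the divisibility $P(u) \mid P(\lambda u)$ to the exact scaling identity, and making sure the final coprimality step is handled: once $P(u) = a u^m$ and $Q(u) = b u^n$ are coprime, they share no factor of $u$, so $\min(m,n) = 0$, and then the original identity $P(u)Q(\lambda u) = P(\lambda u)Q(u)$ reduces to $\lambda^n = \lambda^m$, forcing $m = n = 0$ since $\lambda$ is not a root of unity.
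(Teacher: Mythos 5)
Your proof is correct, and it rests on the same central observation as the paper's: because $\lambda\in\mathbb{Q}_{>0}$ with $\lambda\neq 1$ is not a root of unity in characteristic zero, the orbit of any nonzero element of $k'$ under $r\mapsto\lambda^{-1}r$ is infinite, which is incompatible with a finite root multiset being closed under that map. The organization, however, is genuinely different. You first isolate the ``eigenvalue'' identity $P(\lambda u)=\lambda^{\deg P}P(u)$ (using coprimality to get $P(u)\mid P(\lambda u)$, then matching degrees and leading coefficients), which lets you run the orbit argument on the roots of $P$ alone, and then you close out the monomial case explicitly via $\lambda^m=\lambda^n$. The paper never derives the scaling identity; instead it works directly with the cross-relation $P(u)Q(\lambda u)=P(\lambda u)Q(u)$: it picks the boundary index $n$ of the orbit of a nonzero root $\alpha$ of $P$ (so $\lambda^{-n}\alpha$ is a root of $P$ but $\lambda^{-(n+1)}\alpha$ is not) and reads off from the cross-relation that $\lambda^{-n}\alpha$ must also be a root of $Q$, contradicting coprimality outright; the monomial endgame is then waved off as ``easy to see.'' Both routes are sound and of comparable length; yours is a bit more structured and buys you a self-contained endgame, while the paper's shortcut avoids having to establish the scaling identity at all.
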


\begin{proof}
Suppose, say, $P$ is non-constant (the same proof will work if $Q$ is non-constant), and admits a non-zero root $\alpha$. As $\lambda\in\mathbb{Q}^*$, and $k'$ has characteristic zero, the orbit of $\alpha$ under the bijection $h\ :\ x\longmapsto \lambda^{-1} x$ is infinite. Therefore, there exists $n\in\mathbb{Z}$ so that $h^n(\alpha)$ is a root of $P(u)$, but $h^{n+1}(\alpha)$ is not. As $h^n(\alpha)$ is a root of $P(u)$, $h^{n+1}(\alpha)$ is a root of $P(\lambda u)$. As $P(u)Q(\lambda u)=P(\lambda u)Q(u)$, $h^{n+1}(\alpha)$ is a root of $P(u)Q(\lambda u)$, so it is a root of $Q(\lambda u)$. This contradicts the coprimality hypothesis, as $h^n(\alpha)$ is a common root of $P(u)$ and $Q(u)$.
\par Now, $P$ and $Q$ are coprime and do not admit any non-zero root. In particular, $P$ and $Q$ are either both constant, or one of them is constant and the other can be written $\beta X^N$ for some $\beta\in k'^*$, $N>0$. Then, if $P$ and $Q$ are not both constant, it is easy to see that $\frac{P(u)}{Q(u)}\neq\frac{P(\lambda u)}{Q(\lambda u)}$, which concludes the proof.
\end{proof}

\begin{corollary}\label{fracQcqTransc}
Let $k'$ be a field of characteristic zero, $0<N\neq M>0$, and $F\in k'(u)\setminus k$. Then $F(Mu)\neq F(Nu)$.
\end{corollary}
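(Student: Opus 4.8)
The plan is to deduce this directly from the preceding lemma after two reductions. First I would reduce to the case where $k'$ is algebraically closed: if $F\in k'(u)$ is non-constant, it remains non-constant when viewed inside $(k')^{alg}(u)$, and the identity $F(Mu)=F(Nu)$ is preserved under the field extension $k'\subseteq (k')^{alg}$, so it suffices to derive a contradiction over $(k')^{alg}$.

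Second, I would fold the two scalars $M,N$ into a single one. Assume toward a contradiction that $F(Mu)=F(Nu)$. Performing the substitution $u\mapsto u/N$ (legitimate since $N\neq 0$) yields $F(\lambda u)=F(u)$ with $\lambda=M/N$. Since $M,N>0$ we have $\lambda\in\mathbb{Q}_{>0}$, and since $M\neq N$ we have $\lambda\neq 1$, so $\lambda$ satisfies the hypotheses of the preceding lemma.

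Finally, write $F=P/Q$ with $P,Q\in (k')^{alg}[u]$ coprime and nonzero. The substitution $u\mapsto\lambda u$ is an automorphism of the polynomial ring $(k')^{alg}[u]$ (because $\lambda\in (k')^{alg\,*}$), hence it preserves coprimality: $P(\lambda u)$ and $Q(\lambda u)$ are again coprime. The identity $F(u)=F(\lambda u)$ now reads $\frac{P(u)}{Q(u)}=\frac{P(\lambda u)}{Q(\lambda u)}$, so the preceding lemma forces $P$ and $Q$ to be constant, i.e. $F$ is constant, contradicting $F\in k'(u)\setminus k'$. Therefore $F(Mu)\neq F(Nu)$. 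There is essentially no obstacle here; the only points needing a word of justification are that non-constancy and the functional equation both descend to (equivalently, are inherited by) the algebraic closure, and that coprimality of numerator and denominator is stable under the dilation $u\mapsto\lambda u$.
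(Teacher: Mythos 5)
Your proof is correct and follows essentially the same route as the paper's: reduce to the algebraically closed case, normalize the two scalars to a single ratio $\lambda=M/N\in\mathbb{Q}_{>0}\setminus\{1\}$, write $F$ as a coprime ratio $P/Q$, and invoke the preceding lemma. The only cosmetic difference is that the paper writes $F(Nu)=P(u)/Q(u)$ directly while you substitute $u\mapsto u/N$ first; the extra remarks you add (that non-constancy and the functional equation survive passage to the algebraic closure, and that coprimality is preserved under the automorphism $u\mapsto\lambda u$) are exactly the points the paper leaves implicit.
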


\begin{proof}
We can assume $k'$ is algebraically-closed. We can write $F(Nu)=\frac{P(u)}{Q(u)}$, with $P$, $Q\in k'[u]$ coprime and non-zero (because $F\neq 0$). If we had $F(Mu)=F(Nu)$, then we would have $\frac{P(u)}{Q(u)}=\frac{P(\frac{M}{N} u)}{Q(\frac{M}{N} u)}$, so both $P$ and $Q$ would be constant by the lemma, so $F\in k'$, this is absurd.
\end{proof}

\subsection{An example of weak stable embeddedness}\label{contreEx}

Let us build an example of a Henselian valued field of residue characteristic zero $\mathcal{M}$, a subfield $A< K(\mathcal{M})$, and an $A$-definable set $X\subset k(\mathcal{M})$ so that $X$ is not $k(\acl(A))$-definable (and hence in particular not $res(K(\acl(A)))$-definable). We will also have $\Gamma(\acl(A))>\Gamma(\dcl(\emptyset))$, so that the example is not too degenerate.

\begin{definition}\label{defContreEx}
Let $k'=\mathbb{Q}^{alg}(u)$, $G=\mathbb{Q}$, and $\mathcal{M}=k'((t^G))$. We will write $ac$ for the natural angular component map of $\mathcal{M}$, and we will identify $k'$ with a subfield of $K(\mathcal{M})$ via the natural lift of the residue field. Let $A\subset K(\mathcal{M})$ be the field generated by $u\cdot t$, and $X=u\mod (k^*)^2=\left\lbrace x\in k|x\cdot u^{-1}\in (k^*)^2\right\rbrace$.
\end{definition}

\begin{proposition}
The set $X$ is $A$-definable.
\end{proposition}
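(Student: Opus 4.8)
The plan is to exhibit an explicit formula over $A$ defining $X$, using the single parameter $a:=u\cdot t\in A$. The point is that $a$ has $val(a)=1$ in $\Gamma(\mathcal M)=\mathbb Q$ and that, viewed in the Hahn field $\mathcal M$, the element $a=u\,t^{1}$ has leading coefficient $u$ (i.e.\ $ac(a)=u$). Thus, for $x\in k$ and any lift $b\in\mathcal O^{*}$ of $x$, the product $ab$ has value $1$ and leading coefficient $xu$, and we will detect whether $x\in X$ by asking whether $ab$ is a square.

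First I would rewrite $X$ multiplicatively: for $x\in k^{*}$ one has $x\cdot u^{-1}\in(k^{*})^{2}$ if and only if $x\cdot u\in(k^{*})^{2}$, since the two differ by the square $u^{2}$; hence $X=\{x\in k : x\neq 0,\ xu\in(k^{*})^{2}\}$, and $0\notin X$. The one substantial step is the following square characterization: if $c\in K^{*}$ has $val(c)=1$, then $c\in(K^{*})^{2}$ if and only if its leading coefficient $c_{1}$ lies in $(k^{*})^{2}$. Indeed, writing $c=c_{1}\,t\,(1+m)$ with $m\in\mathfrak M$, the factor $1+m$ is a square because $\mathcal M$ has residue characteristic zero (apply Hensel's lemma to $T^{2}-(1+m)$, using $2\in\mathcal O^{*}$), and $t=(t^{1/2})^{2}$ because $\mathbb Q$ is divisible, so $c\in(K^{*})^{2}\iff c_{1}\in(k^{*})^{2}$. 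Applying this to $c=ab$ with $b\in\mathcal O^{*}$ gives $ab\in(K^{*})^{2}\iff u\cdot res(b)\in(k^{*})^{2}$; in particular whether $ab$ is a square depends only on $res(b)$, not on the chosen lift.

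Then I claim that $X$ is defined in $\mathcal M$ by the formula
$$\varphi(x)\ :\quad \exists b\in K\ \exists z\in K\ \big(val(b)=0\ \wedge\ res(b)=x\ \wedge\ z^{2}=a\cdot b\big),$$
which uses only the parameter $a\in A$ together with $\emptyset$-definable symbols, hence is an $A$-formula. For the verification: if $x\in X$, pick any lift $b\in\mathcal O^{*}$ of $x$; then $ab\in(K^{*})^{2}$ by the previous step, so $\varphi(x)$ holds. Conversely, if $\varphi(x)$ holds, then $val(b)=0$ forces $b\in\mathcal O^{*}$ and $x=res(b)\neq 0$, while $z^{2}=ab$ gives $xu\in(k^{*})^{2}$, i.e.\ $x\in X$. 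Hence $\varphi(\mathcal M)=X$, so $X$ is $A$-definable. The whole argument is essentially bookkeeping; the only place the hypotheses enter is the square characterization, which uses residue characteristic zero (to get $1+\mathfrak M\subseteq(K^{*})^{2}$) and divisibility of the value group $\mathbb Q$ (to get $t\in(K^{*})^{2}$), and I do not expect any genuine obstacle.
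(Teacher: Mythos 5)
Your proof is correct and is essentially the same argument as the paper's: both detect membership of $x$ in $u\cdot(k^*)^2$ by multiplying a lift of $x$ against the parameter $a=u\cdot t$ and testing squareness in $K$, using divisibility of $\Gamma=\mathbb{Q}$ and Henselianity in residue characteristic zero to reduce squareness in $K^*$ at value $1$ to squareness of the leading coefficient in $k^*$. The paper packages the formula as $\exists y\in(K^*)^2$ with $val(y)=val(a)$ and $x=res(a/y)$, while you package it as $\exists b$ lifting $x$ with $ab\in(K^*)^2$; these are trivially interdefinable, so the content is identical.
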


\begin{proof}
Consider the $A$-definable set:
$$Y=\left\lbrace x\in k|\exists y\in K\ val(y)=val(u\cdot t)\wedge y\in (K^*)^2\wedge x=res(u\cdot t\cdot y^{-1})\right\rbrace$$
Let us show that $X=Y$.
\par If $x\in X(\mathcal{M})$, then we have $x=u v^2$ for some $v\in k'^*$. We have $x\in Y(\mathcal{M})$ by choosing $y=(v^{-1}\cdot t^{\frac{1}{2}})^2$.
\par Let $x\in Y(\mathcal{M})$, and $y$ be as in the definition of $Y$. Then $ac(y)$ must be in $(k'^*)^2$, and we have $x=u\cdot ac(y)^{-1}$, so $x\in X(\mathcal{M})$.
\par We have $X(\mathcal{M})=Y(\mathcal{M})$, so $X=Y$.
\end{proof}

\begin{proposition}
We have $k(\acl(A))=\mathbb{Q}^{alg}$.
\end{proposition}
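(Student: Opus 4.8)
The plan is to prove the two inclusions separately, the inclusion $\mathbb{Q}^{alg}\subset k(\acl(A))$ being immediate and the reverse one carrying all the content. For the easy direction I would observe that every $\beta\in\mathbb{Q}^{alg}\subset k(\mathcal{M})$ is a root of its minimal polynomial over the prime field $\mathbb{Q}$; since the rationals lie in $\dcl(\emptyset)$ inside the residue sort, $\beta$ satisfies an $\emptyset$-definable formula with only finitely many solutions, hence $\beta\in\acl(\emptyset)\subset\acl(A)$.

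For the reverse inclusion I would argue by contradiction, using automorphisms of the Hahn field $\mathcal{M}=k'((t^G))$ built from the maps $aut^1_{k',G}$ and $aut^2_{k',G}$ of Definition \ref{defAut}. Assume $\alpha\in k(\acl(A))$ with $\alpha\notin\mathbb{Q}^{alg}$; since $k(\mathcal{M})=k'=\mathbb{Q}^{alg}(u)$, we may write $\alpha=F(u)$ for some $F\in\mathbb{Q}^{alg}(u)\setminus\mathbb{Q}^{alg}$. The heart of the proof is to produce, for each integer $n>0$, a valued field automorphism $\tau_n$ of $\mathcal{M}$ fixing $A$ pointwise with $\tau_n(\alpha)=F(nu)$. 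Granting this, Corollary \ref{fracQcqTransc} shows the $F(nu)$ are pairwise distinct, so $\alpha$ has infinitely many $A$-conjugates in $\mathcal{M}$, contradicting $\alpha\in\acl(A)$; hence $\alpha\in\mathbb{Q}^{alg}$.

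To build $\tau_n$, I would let $\sigma_n\in Aut(k')$ be the automorphism of $\mathbb{Q}^{alg}(u)$ fixing $\mathbb{Q}^{alg}$ with $\sigma_n(u)=nu$, and apply Lemma \ref{normalisateurDiv} to the divisible group $(\mathbb{Q}^{alg})^*$ and the element $n^{-1}$ to get a homomorphism $\rho_n\in Hom(G,k'^*)$ with $\rho_n(1)=n^{-1}$; then I set $\tau_n:=aut^1_{k',G}(\sigma_n)\circ aut^2_{k',G}(\rho_n)$. The remaining verifications are routine bookkeeping: $aut^2_{k',G}(\rho_n)$ scales the coefficient of $t$ in $u\cdot t$ by $\rho_n(1)=n^{-1}$ while $aut^1_{k',G}(\sigma_n)$ scales $u$ by $n$, so $\tau_n$ fixes $u\cdot t$, and as both factors fix the constant series with rational coefficients, $\tau_n$ fixes $A=\mathbb{Q}(u\cdot t)$ pointwise; moreover $aut^2_{k',G}(\rho_n)$ induces the identity on the residue field (as $\rho_n(0)=1$) while $aut^1_{k',G}(\sigma_n)$ induces $\sigma_n$ there, so $\tau_n(\alpha)=\sigma_n(\alpha)=F(nu)$.

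The one genuinely delicate point is the construction of $\tau_n$: to undo the effect of $u\mapsto nu$ on the parameter $u\cdot t$ one needs an automorphism rescaling the $t$-exponents by a homomorphism out of the value group $G=\mathbb{Q}$ with prescribed value at $1$, and for such a homomorphism to exist its target must be divisible — which forces one to route the construction through $(\mathbb{Q}^{alg})^*$ rather than through $k'^*=(\mathbb{Q}^{alg}(u))^*$, the latter being non-divisible. Everything else reduces to the explicit description of $aut^1$ and $aut^2$ on $u\cdot t$ and on $k$, together with the elementary Corollary \ref{fracQcqTransc}.
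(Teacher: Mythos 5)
Your proposal is correct and takes essentially the same approach as the paper: both reduce the question to finding infinitely many $A$-conjugates of $F(u)$ via a composition $\tau_n$ of $aut^1_{k',G}(\sigma_n)$ with $aut^2_{k',G}(\rho_n)$, built so that the two effects cancel on $u\cdot t$, then invoke Corollary \ref{fracQcqTransc}. The only cosmetic difference is the order of composition ($aut^1\circ aut^2$ versus the paper's $aut^2\circ aut^1$), which does not affect the argument; you also make explicit the easy inclusion $\mathbb{Q}^{alg}\subset k(\acl(A))$ and the reason one must route the divisibility argument through $(\mathbb{Q}^{alg})^*$ rather than $k'^*$, points the paper leaves implicit.
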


\begin{proof}
Let $N>0$. Let $\sigma_N\in Aut(k')$ be defined as $\sigma_N(F(u))=F(N u)$. Apply \ref{normalisateurDiv} to the divisible group $(\mathbb{Q}^{alg})^*$ to find $\sigma'_N\in Hom(\mathbb{Q}, k'^*)$ so that $\sigma'_N(1)=N^{-1}$. Let $\tau_N=aut^2_{k', G}(\sigma'_N)\circ aut^1_{k', G}(\sigma_N)$. Then $\tau_N\in Aut(\mathcal{M}/A)$. By \ref{fracQcqTransc} (with $k'=\mathbb{Q}^{alg}$), for each $F\in k'\setminus \mathbb{Q}^{alg}$, $(\tau_N(F))_N=(F(N\cdot u))_N$ is an infinite family of pairwise-distinct $A$-conjugates of $F$, so $F\not\in \acl(A)$. This concludes the proof.
\end{proof}

\begin{proposition}
The definable set $X$ has infinitely many $\mathbb{Q}^{alg}$-conjugates.
\end{proposition}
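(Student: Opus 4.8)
The plan is to follow the strategy of the previous proposition, but with a simpler family of automorphisms: rather than the $\tau_N$, which were tailored so as to fix $A$ (and therefore fix $X$), I would use automorphisms coming from $aut^1_{k', G}$ alone, which fix $\mathbb{Q}^{alg}$ but not $A$; this time there is no need for the $aut^2$ correction.

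First, for each integer $m>0$, I would let $\rho_m$ be the field automorphism of $k'=\mathbb{Q}^{alg}(u)$ over $\mathbb{Q}^{alg}$ sending $u$ to $u+m$ (it is surjective, since $u=(u+m)-m$ lies in its image), and set $\theta_m=aut^1_{k', G}(\rho_m)\in Aut(\mathcal{M})$. By construction $\theta_m$ restricts to the identity on the copy of $\mathbb{Q}^{alg}$ inside $K(\mathcal{M})$ --- hence it fixes $\acl^{eq}(\emptyset)$, and in particular the subset $\mathbb{Q}^{alg}$ of $k(\mathcal{M})$, pointwise --- and it induces $\rho_m$ on the residue field. Since $X=u\cdot(k^*)^2$ is defined by the formula $\exists y\,(x=u\,y^2)$, extending $\theta_m$ to $\mathcal{M}^{eq}$ shows that $\theta_m(X)=(u+m)\cdot(k^*)^2$; its canonical parameter then has the same type over $\mathbb{Q}^{alg}$ as that of $X$, so $\theta_m(X)$ is a $\mathbb{Q}^{alg}$-conjugate of $X$.

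Then I would check that the sets $\theta_m(X)$, for $m>0$, are pairwise distinct. They are cosets of $(k^*)^2$ in $k(\mathcal{M})^*=\mathbb{Q}^{alg}(u)^*$, so $\theta_m(X)=\theta_{m'}(X)$ holds if and only if $\frac{u+m}{u+m'}$ is a square in $\mathbb{Q}^{alg}(u)$. Assuming $m>m'>0$ and writing $n=m-m'>0$, Lemma \ref{residuQuadTransc} --- with $\mathbb{Q}^{alg}$ in the role of its base field and $u+m'$ in the role of its transcendental element --- tells us that $\frac{u+m'}{u+m}=\frac{u+m'}{(u+m')+n}$ is not a square in $\mathbb{Q}^{alg}(u+m')=\mathbb{Q}^{alg}(u)$, hence neither is its inverse. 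Therefore $\{\theta_m(X):m\in\mathbb{Z}_{>0}\}$ is an infinite family of pairwise distinct $\mathbb{Q}^{alg}$-conjugates of $X$, as desired.

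The argument is short, and I do not expect a real obstacle. The two points deserving a word of justification are: that $\theta_m$ genuinely fixes $\mathbb{Q}^{alg}=k(\acl(A))$ pointwise, so that the images $\theta_m(X)$ do count as $\mathbb{Q}^{alg}$-conjugates irrespective of how $\theta_m$ moves $A$; and the identification $\theta_m(X)=(u+m)\cdot(k^*)^2$, together with the elementary observation that two cosets of $(k^*)^2$ coincide exactly when the ratio of representatives is a square.
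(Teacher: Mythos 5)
Your proof is correct and follows essentially the same route as the paper: the paper defines $\sigma_N(F(u))=F(u+N)$, applies $aut^1_{k',G}$ to produce automorphisms fixing $\mathbb{Q}^{alg}$ pointwise, and invokes Lemma \ref{residuQuadTransc} to show the images of $X$ are pairwise distinct, exactly as you do with $\rho_m$, $\theta_m$, and the coset-ratio argument. The extra care you take in unwinding the coset identity and in verifying that $\theta_m$ fixes $\mathbb{Q}^{alg}$ despite moving $A$ is a useful spelling-out of details that the paper only gestures at in a footnote.
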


\begin{proof}
For each $N<\omega$, let $\sigma_N\in Aut(k'/\mathbb{Q}^{alg})$ be defined as $\sigma_N(F(u))=F(u+N)$, and $\tau_N=aut^1_{k', G}(\sigma_N)$.\footnote{The map $\sigma_N$ is $\mathbb{Q}^{alg}$-invariant, but it is not $A$-invariant anymore.} By \ref{residuQuadTransc}, the $(\tau_N(X))_N$ are all pairwise-distinct.
\end{proof}

As a result the $A$-definable set $X$ is not $k(\acl(A))$-definable, which concludes our example of a Henselian valued field of residue characteristic zero where $k$ is not strongly stably embedded.

\subsection{Elementary embeddings into Hahn fields}
Apart from the proof of \ref{zetaRV}, the content of this subsection is mostly an application of some of the material covered in the notes of van den Dries \cite{vandenDries2014}.

\begin{proposition}[\cite{vandenDries2014}, Corollary 4.29]\label{extImmUnique}
Let $\mathcal{M}$ be any valued field of residue characteristic zero. Let $\sigma_1$, $\sigma_2$ be valued field embeddings with domain $\mathcal{M}$, and $\mathcal{M}_i$ a maximal immediate extension of $\sigma_i(\mathcal{M})$. Then there exists $\tau$ a valued field isomorphism $\mathcal{M}_1\longrightarrow\mathcal{M}_2$ so that $\sigma_2=\tau\circ\sigma_1$.
\end{proposition}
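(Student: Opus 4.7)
The plan is to reduce to the classical uniqueness statement for maximal immediate extensions. First I would observe that the statement is really about a common valued field. Setting $L_i = \sigma_i(\mathcal{M})$, the composition $\sigma = \sigma_2 \circ \sigma_1^{-1} \colon L_1 \to L_2$ is a valued field isomorphism, and $\mathcal{M}_i$ is a maximal immediate extension of $L_i$. If I can produce, over $\sigma$, any valued field isomorphism between $\mathcal{M}_1$ and some maximal immediate extension $\mathcal{M}_2'$ of $L_2$, then the uniqueness (over $L_2$) of maximal immediate extensions gives an isomorphism $\mathcal{M}_2' \to \mathcal{M}_2$ fixing $L_2$, and composing yields $\tau$. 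So the heart of the matter is: any two maximal immediate extensions of a common base $L$ of residue characteristic zero are isomorphic over $L$.

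I would then run a standard back-and-forth/Zorn argument. Let $\mathcal{F}$ be the set of valued field isomorphisms $f \colon N_1 \to N_2$ with $L \subset N_i \subset \mathcal{M}_i$ and $f|_L = \mathrm{id}$. Any subextension of an immediate extension is immediate, so each such $N_i / L$ is immediate. Clearly $\mathcal{F}$ is non-empty, and chains have upper bounds by taking unions of graphs, so Zorn gives a maximal $f^* \colon N_1^* \to N_2^*$.

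The crucial step is to show $N_1^* = \mathcal{M}_1$ (and symmetrically $N_2^* = \mathcal{M}_2$). Suppose $a \in \mathcal{M}_1 \setminus N_1^*$. Since $\mathcal{M}_1 / N_1^*$ is still immediate, the standard Ostrowski--Kaplansky characterization of immediate extensions produces a pseudo-Cauchy sequence $(a_\rho)$ in $N_1^*$ without pseudo-limit in $N_1^*$, but with $a$ as pseudo-limit in $\mathcal{M}_1$. Its image $(f^*(a_\rho))$ is pseudo-Cauchy in $N_2^*$, and since $\mathcal{M}_2$ is maximal, it admits a pseudo-limit $a' \in \mathcal{M}_2$. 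I then split on the type of the sequence:
\begin{itemize}
\item if $(a_\rho)$ is of transcendental type, then $a$ and $a'$ are transcendental over $N_1^*$ and $N_2^*$ respectively, and Kaplansky's computation of the value of any polynomial evaluated at a pseudo-limit shows that $f^*$ extends uniquely to a valued field isomorphism $N_1^*(a) \to N_2^*(a')$ with $a \mapsto a'$, the extension remaining immediate;
\item if $(a_\rho)$ is of algebraic type, fix a minimal polynomial $\mu \in N_1^*[X]$ witnessing this; inside the maximal immediate extensions one may choose $a$ to be a root of $\mu$ pseudo-limiting $(a_\rho)$, and $a'$ to be a root of $f^*(\mu)$ pseudo-limiting $(f^*(a_\rho))$, and extend $f^*$ algebraically by $a \mapsto a'$.
\end{itemize}
In either case the extension contradicts the maximality of $f^*$, concluding the proof.

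The main obstacle is the algebraic-type case, which requires Kaplansky's theorem: in residue characteristic zero, any pseudo-Cauchy sequence of algebraic type over a field $N$ admits, in any sufficiently rich extension, a pseudo-limit that is a root of its minimal witnessing polynomial, and adjoining such a root yields an immediate extension to which the isomorphism extends canonically. The residue characteristic zero hypothesis is essential here — in positive residue characteristic, defect phenomena would break this and uniqueness of maximal immediate extensions can genuinely fail.
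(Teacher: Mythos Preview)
The paper does not actually prove this proposition: it is stated with a citation to van den Dries's notes (Corollary 4.29) and used as a black box. So there is nothing in the paper to compare your argument against.

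That said, your sketch is the standard Kaplansky argument and is essentially correct. A couple of minor points worth tightening. First, your opening reduction is slightly circular as written: you propose to build an isomorphism from $\mathcal{M}_1$ to \emph{some} maximal immediate extension $\mathcal{M}_2'$ of $L_2$ and then invoke uniqueness over $L_2$ to pass to $\mathcal{M}_2$, but that uniqueness is precisely the content of the theorem. The clean reduction is simply to run the Zorn argument with $f$ ranging over partial valued-field isomorphisms extending $\sigma = \sigma_2\circ\sigma_1^{-1}$ (rather than the identity), and show directly that a maximal such $f^*$ has domain $\mathcal{M}_1$ and image $\mathcal{M}_2$. Second, in the algebraic-type case you write ``one may choose $a$ to be a root of $\mu$'', but $a$ was already fixed as an arbitrary element of $\mathcal{M}_1\setminus N_1^*$. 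What you mean is that you abandon the original $a$ and instead extend $f^*$ by adjoining a root of $\mu$ in $\mathcal{M}_1$ that pseudo-limits $(a_\rho)$; such a root exists because $\mathcal{M}_1$, being a maximal immediate extension of $L_1$ in residue characteristic zero, is maximally complete, hence also maximal immediate over $N_1^*$. With those two clarifications the argument goes through, and it is indeed the argument one finds in the cited reference.
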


\begin{proposition}\label{plongementHahnSimple}
Let $\mathcal{M}$ be a Henselian Pas field of residue characteristic zero, and $l$ a lift of the residue field. Then there exists a valued field embedding $\sigma\ :\ \mathcal{M}\longrightarrow k(\mathcal{M})((t^{\Gamma(\mathcal{M})}))$ so that the following conditions hold:

\begin{itemize}
\item $\sigma$ is the identity on $k(\mathcal{M})\cup\Gamma(\mathcal{M})$.
\item For every $\alpha\in k(\mathcal{M})$, $\sigma(l(\alpha))$ is the constant series $\alpha$.
\item For every $\gamma\in\Gamma(\mathcal{M})$, $\sigma(s(\gamma))=t^\gamma$.
\end{itemize}
Moreover, $\sigma$ is an elementary map of Pas fields.
\end{proposition}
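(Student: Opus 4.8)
The plan is to construct the embedding $\sigma$ by hand, using the lift $l$, the angular component $ac$, and the section $s$ to "read off" a Hahn series from each element of $\mathcal{M}$, and then to upgrade the resulting embedding to an elementary one using Proposition \ref{extImmUnique}. First I would observe that since $\mathcal{M}$ is a Henselian Pas field of residue characteristic zero, it contains the subfield $\mathcal{M}_0$ generated over $\mathbb{Q}$ by $l(k(\mathcal{M}))$ and $s(\Gamma(\mathcal{M}))$; this subfield carries a natural valued field embedding into $k(\mathcal{M})((t^{\Gamma(\mathcal{M})}))$ sending $l(\alpha)$ to the constant series $\alpha$ and $s(\gamma)$ to $t^\gamma$ (this is well-defined because $l$ and $s$ are sections of $res$ and $val$ respectively, so the only relations among these elements are the obvious ones, and the target Hahn field is Henselian). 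The key point is that $\mathcal{M}_0$ has residue field $k(\mathcal{M})$ and value group $\Gamma(\mathcal{M})$: indeed $res(l(\alpha))=\alpha$ exhausts $k(\mathcal{M})$, and $val(s(\gamma))=\gamma$ exhausts $\Gamma(\mathcal{M})$.

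**Extending over the whole field.** Since $\mathcal{M}$ and $\mathcal{M}_0$ have the same residue field and value group, $\mathcal{M}$ is an immediate extension of $\mathcal{M}_0$ (in the sense that the extension $\mathcal{M}_0 \subseteq \mathcal{M}$ adds no residues and no values, even though it may not be algebraic). Likewise, $k(\mathcal{M})((t^{\Gamma(\mathcal{M})}))$ is a maximal immediate extension of the image of $\mathcal{M}_0$, since Hahn fields are maximally complete. Now I would embed $\mathcal{M}$ into some maximal immediate extension $\widehat{\mathcal{M}}$ of $\mathcal{M}_0$; by Proposition \ref{extImmUnique} (applied with $\sigma_1$ the inclusion $\mathcal{M}_0 \hookrightarrow \widehat{\mathcal{M}}$ and $\sigma_2$ the embedding $\mathcal{M}_0 \hookrightarrow k(\mathcal{M})((t^{\Gamma(\mathcal{M})}))$ constructed above, both of which are maximal immediate extensions of $\mathcal{M}_0$ since both the target Hahn field and $\widehat{\mathcal{M}}$ are maximally complete), there is an isomorphism of valued fields identifying $\widehat{\mathcal{M}}$ with $k(\mathcal{M})((t^{\Gamma(\mathcal{M})}))$ over $\mathcal{M}_0$. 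Composing with the embedding $\mathcal{M}\hookrightarrow\widehat{\mathcal{M}}$ yields the desired $\sigma$, which by construction is the identity on $k(\mathcal{M})\cup\Gamma(\mathcal{M})$ and sends $l(\alpha)$ and $s(\gamma)$ to the prescribed series. One should double-check that $\sigma$ restricted to $\mathcal{M}_0$ really is the map built in the first step and not some other extension — this is exactly what the "$\sigma_2 = \tau \circ \sigma_1$" clause of Proposition \ref{extImmUnique} guarantees, so there is no gap.

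**Elementarity.** For the final clause, I would invoke the Ax–Kochen–Ershov principle for Pas fields: the theory of Henselian valued fields of residue characteristic zero with an angular component map eliminates field quantifiers relative to the residue field and value group (Pas's theorem), so a valued field embedding of Pas fields that is an isomorphism onto a substructure inducing an isomorphism on the residue field and value group sorts is automatically elementary. Here $\sigma$ induces the identity on $k(\mathcal{M})$ and on $\Gamma(\mathcal{M})$, hence is surjective on those sorts, and it is compatible with $ac$ and $s$ by construction (the angular component of $\sigma(x)$ is $res(\sigma(x)/t^{val(\sigma(x))}) = ac(x)$, since $\sigma$ preserves $s$ and the residue map); therefore $\sigma$ is an elementary map of Pas fields.

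**Main obstacle.** The one step that requires genuine care, rather than bookkeeping, is verifying that $\mathcal{M}$ is an immediate extension of $\mathcal{M}_0$ — i.e. that adjoining $l(k(\mathcal{M}))$ and $s(\Gamma(\mathcal{M}))$ to $\mathbb{Q}$ already captures all the residues and all the values of $\mathcal{M}$. The value statement is immediate from $val \circ s = \mathrm{id}$. For the residue statement one needs: if $x \in \mathcal{M}_0$ has value $0$ then its residue lies in the field generated by the $res(l(\alpha)) = \alpha$; this holds because $\mathcal{M}_0$'s valuation ring, modulo its maximal ideal, is generated by the residues of its generators, and $s(\gamma)$ for $\gamma \neq 0$ is a unit only when $\gamma = 0$, contributing nothing new. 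The only subtlety is that $\mathcal{M}_0$ might not be Henselian, but that does not matter: immediacy is about the residue field and value group of the extension, and Proposition \ref{extImmUnique} only needs residue characteristic zero, which we have.
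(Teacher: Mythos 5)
Your argument is correct and is essentially the approach the paper intends — the paper itself simply cites van den Dries' notes (the discussion between Lemmas 4.30 and 4.31 for the embedding, and the ``good maps'' of section 5.5 for elementarity), and what is cited there is exactly your construction: take the subfield $\mathcal{M}_0$ generated by $l(k(\mathcal{M}))$ and $s(\Gamma(\mathcal{M}))$, identify it with the field of Hahn series of finite support, use that $\mathcal{M}_0\subset\mathcal{M}$ is immediate together with Kaplansky uniqueness (Proposition \ref{extImmUnique}) to extend to a Hahn-field embedding, and conclude elementarity by the Ax--Kochen--Ershov principle for Pas fields. This is also precisely how the paper proves the closely related Proposition \ref{plongementHahn}, so you are on the same page as the author.
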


\begin{proof}
For the construction of $\sigma$ and the proof that it satisfies the conditions from the list, onc can read the discussion from the notes of van den Dries between the lemmas 4.30 and 4.31.
\par For the proof that $\sigma$ is an elementary map, one can read the section 5.5 of the notes, especially the definitions at the beginning and theorem 5.21. It is clear that $\sigma$ is a “good map" with respect to these definitions.
\end{proof}

We will actually need a construction that is a little more specific to prove Lemma \ref{zetaRV}:

\begin{proposition}\label{plongementHahn}
Let $G$ be an ordered Abelian group, $k'$ a field of characteristic zero, and $\mathcal{M}$ a Pas elementary extension of $k'((t^G))$ (in the Pas language). Then there exists $\sigma$ an elementary embedding of Pas fields $\mathcal{M}\longrightarrow k(\mathcal{M})((t^{\Gamma(\mathcal{M})}))$ so that the following conditions hold:
\begin{itemize}
\item $\sigma$ is the identity on $\Gamma(\mathcal{M})\cup k(\mathcal{M})$.
\item The restriction of $\sigma$ to $k'((t^G))$ is the inclusion map.
\end{itemize}
\end{proposition}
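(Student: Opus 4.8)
The plan is to build $\sigma$ by combining a construction like the one in Proposition \ref{plongementHahnSimple} with a uniqueness statement for maximal immediate extensions, taking care along the way that the fixed subfield $k'((t^G))$ gets sent to itself by the inclusion map. First I would fix, using Fact \ref{existencePasExtension} (after passing to a saturated model if needed, though $\mathcal{M}$ is already Pas by hypothesis), the angular component $ac$ and section $s$ of $\mathcal{M}$; note that since $\mathcal{M}$ is a Pas elementary extension of $k'((t^G))$, the maps $ac$, $s$ on $\mathcal{M}$ restrict to the standard $ac$, $s$ of $k'((t^G))$. Then I would choose a lift $l$ of the residue field in $\mathcal{M}$ that extends the standard lift of $k'$ into $k'((t^G))\subset \mathcal{O}_\mathcal{M}$: such an $l$ exists by Proposition \ref{extensionLift}, applied to the inclusion $k'((t^G))\hookrightarrow\mathcal{M}$ and the standard lift of the residue field in $k'((t^G))$ (whose domain is $k'$), bearing in mind $k(\mathcal{M})\supseteq k'$ and the standard lift of $k'((t^G))$ restricted to $k'$ is exactly the constant-series embedding.

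Next I would invoke Proposition \ref{plongementHahnSimple} with this choice of $l$ to obtain an elementary embedding of Pas fields $\sigma_0\ :\ \mathcal{M}\longrightarrow k(\mathcal{M})((t^{\Gamma(\mathcal{M})}))$ that is the identity on $k(\mathcal{M})\cup\Gamma(\mathcal{M})$, sends $l(\alpha)$ to the constant series $\alpha$, and sends $s(\gamma)$ to $t^\gamma$. The only thing that might fail is the second bullet of the statement: $\sigma_0$ restricted to $k'((t^G))$ need not literally be the inclusion map, only an embedding agreeing with it on the generators $k'$ and $\{t^\gamma : \gamma\in G\}$ (since $\sigma_0(l(\alpha))=\alpha$ for $\alpha\in k'$ and $\sigma_0(s(\gamma))=t^\gamma$ for $\gamma\in G$, using that $l$ extends the standard lift and $s$ extends the standard section). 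So $\sigma_0$ and the inclusion are two valued-field embeddings of $k'((t^G))$ into $k(\mathcal{M})((t^{\Gamma(\mathcal{M})}))$ that agree on a subfield $F_0$ generated by $k'$ and $t^G$. To fix this, I would correct $\sigma_0$ by precomposing with an automorphism of $\mathcal{M}$ over $F_0$, or — cleaner — I would observe that $k'((t^G))$ is a maximal immediate extension of $F_0$ (it is Hahn, hence maximally complete, and immediate over the subfield generated by $k'$ and $t^G$, since that subfield already has residue field $k'$ and value group $G$) and likewise $\sigma_0(k'((t^G)))$ is a maximal immediate extension of $\sigma_0(F_0)=F_0$; by Proposition \ref{extImmUnique}, the two embeddings of $k'((t^G))$ extending $\mathrm{id}_{F_0}$ differ by an isomorphism $\tau$ of their images fixing $F_0$, which extends (again by \ref{extImmUnique} or by pushing through the whole construction) to an automorphism $\hat\tau$ of $k(\mathcal{M})((t^{\Gamma(\mathcal{M})}))$; setting $\sigma=\hat\tau^{-1}\circ\sigma_0$ then makes the restriction to $k'((t^G))$ the inclusion while preserving elementarity and the first bullet.

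The main obstacle is the bookkeeping in this last step: one must ensure that the correcting automorphism $\hat\tau$ can be taken to be the identity on $k(\mathcal{M})\cup\Gamma(\mathcal{M})$ (so that the first bullet survives) and that composing with it keeps $\sigma$ an \emph{elementary} map of Pas fields rather than merely a valued-field embedding. For the first point, $\tau$ already fixes $k'$ and $t^G$, hence fixes $G$ and $k'$, and by immediacy it induces the identity on residue field and value group of $k'((t^G))$; extending to $\hat\tau$ via the maximal-immediate-extension machinery, or directly by Proposition \ref{plongementHahnSimple}'s construction carried out relative to $F_0$, keeps $\hat\tau$ trivial on $k(\mathcal{M})\cup\Gamma(\mathcal{M})$. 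For elementarity, I would note that automorphisms of a Hahn field of the shape produced here (identity on residue field and value group, immediate) are elementary maps of Pas fields by the same Ax--Kochen--Ershov-type criterion cited in the proof of \ref{plongementHahnSimple} (van den Dries, §5.5, "good maps"); hence the composite $\sigma=\hat\tau^{-1}\circ\sigma_0$ is elementary, and we are done. Alternatively, and perhaps more economically, one can avoid \ref{extImmUnique} entirely: run the van den Dries construction of $\sigma_0$ starting from the partial embedding that is already the literal inclusion on $k'((t^G))$ — the construction in §§4.30--4.31 proceeds by transfinite extension of a partial valued-field embedding, and nothing prevents seeding it with $k'((t^G))\hookrightarrow k(\mathcal{M})((t^{\Gamma(\mathcal{M})}))$ given as the inclusion — so that the second bullet holds by fiat; I would present this variant if the referee-facing exposition is to be kept short.
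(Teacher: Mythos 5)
Your preferred second route is essentially what the paper does, and it is correct; the paper just packages it more efficiently. Rather than literally re-running the van den Dries transfinite recursion with a modified seed, the paper fixes a lift $l$ of the residue field extending the standard lift of $k'$ (via \ref{extensionLift}, exactly as you do), takes $K'$ a maximal immediate extension of $\mathcal{M}$, and then defines a Pas-field embedding $\tau$ \emph{directly} on the subfield $L\leq\mathcal{M}$ generated by $k'((t^G))\cup s(\Gamma(\mathcal{M}))\cup l(k(\mathcal{M}))$: $\tau$ is the inclusion on $k'((t^G))$, sends $s(\gamma)\mapsto t^\gamma$ and $l(\alpha)\mapsto\alpha$ (these three rules are compatible because $l$ and $s$ extend the standard ones). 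Since $L\leq\mathcal{M}$ is immediate, both $K'$ and $k(\mathcal{M})((t^{\Gamma(\mathcal{M})}))$ (via $\tau$) are maximal immediate extensions of $L$, so \ref{extImmUnique} extends $\tau$ to a Pas isomorphism $\sigma\colon K'\to k(\mathcal{M})((t^{\Gamma(\mathcal{M})}))$, and the inclusion $\sigma(\mathcal{M})\hookrightarrow k(\mathcal{M})((t^{\Gamma(\mathcal{M})}))$ is a ``good map,'' hence elementary. This is cleaner than your first alternative, which has a real gap: the passage from $\tau$ (an isomorphism between two copies of $k'((t^G))$ inside the big Hahn field) to an automorphism $\hat\tau$ of the whole $k(\mathcal{M})((t^{\Gamma(\mathcal{M})}))$ fixing $k(\mathcal{M})\cup\Gamma(\mathcal{M})$ is not automatic — you would need to check that the compositum of $\tau$'s domain with the subfield generated by $k(\mathcal{M})$ and $t^{\Gamma(\mathcal{M})}$ is still immediate over the latter, and only then can \ref{extImmUnique} be applied again. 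You are right that the post-hoc correction is the messier path; the paper avoids it by baking the inclusion of $k'((t^G))$ into the definition of the base map $\tau$ from the start.
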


\begin{proof}
The proof is written in the same spirit as in the notes of van den Dries.
\par Let $l$ be a lift of the residue field in $\mathcal{M}$ extending that of $k'((t^G))$ (by \ref{extensionLift}). Let $K'$ be some maximal immediate extension of $\mathcal{M}$, and $L$ be the subfield of $\mathcal{M}$ generated by $k'((t^G))\cup s(\Gamma(\mathcal{M}))\cup l(k(\mathcal{M}))$. We have a valued field embedding $\tau\ :\ L\longrightarrow k(\mathcal{M})((t^{\Gamma(\mathcal{M})}))$ sending each $s(\gamma)$ to $t^\gamma$, sending each $l(\alpha)$ to the constant polynomial $\alpha$, and sending each element of $k'((t^G))$ to itself. Now, the extension $L\leqslant\mathcal{M}$ is clearly immediate, so $K'$ and $k(\mathcal{M})((t^{\Gamma(\mathcal{M})}))$ are both maximal immediate extensions of $L$. By \ref{extImmUnique}, we have $\sigma$ a valued field isomorphism $K'\longrightarrow k(\mathcal{M})((t^{\Gamma(\mathcal{M})}))$ that extends $\tau$. By construction of $\tau$, $\sigma$ is also a Pas isomorphism. Notice now that the inclusion $\sigma(\mathcal{M})\longrightarrow k(\mathcal{M})((t^{\Gamma(\mathcal{M})}))$ is a “good map", hence an elementary embedding of Pas fields. As a result, $\mathcal{M}\longrightarrow\sigma(\mathcal{M})\longrightarrow k(\mathcal{M})((t^{\Gamma(\mathcal{M})}))$ witnesses the proposition.
\end{proof}

Now, as promised, here is a proof of Lemma \ref{zetaRV}. The techniques used in this proof are quite similar to those used in subsection \ref{subSectStPlFort}. We give less formal details here than in the next subsection.

\begin{proof}[Proof of Lemma \ref{zetaRV}]
Let $H$ be the $\emptyset$-type-definable group $\bigcap\limits_{N<\omega}(k^*)^N$. For each $N<\omega$, let $X_N$ be the definable set: $$\left\lbrace x\in \RV| x\mod (k^*)^N\subset X\right\rbrace$$ Let $X_\omega=\left\lbrace x\in \RV| x\mod H\subset X\right\rbrace$. We just have to prove that $val(X\setminus X_N)$ is finite for some $N<\omega$. If not, then by compactness $val(X\setminus X_\omega)$ must be infinite. Let us show that this is impossible.
\par We can apply \ref{plongementHahnSimple} to some Pas field that is an elementary extension of $\mathcal{M}$ (which exists by \ref{existencePasExtension}) to find $\mathcal{M}_1$ a Hahn field that is an elementary extension of $\mathcal{M}$. Let $\mathcal{N}$ be a $|\mathcal{M}_1|^+$-saturated elementary extension of $\mathcal{M}_1$. By \ref{plongementHahn}, let $\mathcal{M}_2$ be a Hahn field that contains $\mathcal{M}_1$ so that we have an elementary embedding $\sigma\ :\ \mathcal{N}\longrightarrow\mathcal{M}_2$ which restricts to the identity on $\mathcal{M}_1$.
\par By compactness and saturation, if $val(X\setminus X_\omega)$ was infinite, then there would exist $x\in\RV(\sigma(\mathcal{N}))$ so that $x\in X\setminus X_\omega$ and $val(x)\not\in \Gamma(\mathcal{M}_1)$. Let $y$ be in $H(\sigma(\mathcal{N}))$ so that $xy\not\in X$. By \ref{normalisateurDiv} applied to the divisible group $H$, let $\tau\in Hom(\mathbb{Q}\cdot val(x), H)$ so that $\tau(val(x))=y$. Let $A\leqslant\mathbb{Q}\otimes\Gamma(\mathcal{M}_2)$ so that $\mathbb{Q}\otimes\Gamma(\mathcal{M}_2)=(\mathbb{Q}\otimes\Gamma(\mathcal{M}_1))\oplus A\oplus \mathbb{Q}\cdot val(x)$. Let $\tau'$ be the group homomorphism $\mathbb{Q}\otimes\Gamma(\mathcal{M}_2)\longrightarrow k(\mathcal{M}_2)^*$ that extends $\tau$ which is trivial on $(\mathbb{Q}\otimes\Gamma(\mathcal{M}_1))\oplus A$. Then $aut^2_{k(\mathcal{M}_2), \Gamma(\mathcal{M}_2)}(\tau'_{|\Gamma(\mathcal{M}_2)})$ is a valued field automorphism of $\mathcal{M}_2$ leaving $\mathcal{M}_1$ pointwise-invariant, and sending $x$ to $xy$. As $X$ is $\mathcal{M}_1$-definable, we must have $xy\in X$, a contradiction.
\end{proof}

\subsection{A sufficient condition for strong stable embeddedness}\label{subSectStPlFort}

\begin{lemma}\label{semantiqueFortementStPlonge}
Let $\mathcal{M}\models HF_{0, 0}$ be $\aleph_1$-saturated, and $A\subset\mathcal{M}^{eq}$ an algebraically-closed parameter set. Suppose $\mathcal{M}$ is strongly-$|A|^+$-homogeneous, and let $ac$ be an angular component on $\mathcal{M}$. Let $H$ be the divisible $\emptyset$-type-definable group $\bigcap\limits_{N>0} (k^*)^N$. Suppose for all $r\in ac(K(A))$, there exists $a\in\mathcal{O}_A$ so that $res(a)\mod H=r\mod H$. Then every $A$-definable subset of $k$ is actually $res(K(A))$-definable in the pure field $k(\mathcal{M})$.
\end{lemma}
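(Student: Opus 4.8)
\emph{Reduction.} The plan is to reduce the statement to a claim about pure-field types over $res(K(A))$ and then to establish that claim by moving into a Hahn-field elementary extension, in the spirit of the proofs of Lemma~\ref{zetaRV} and Proposition~\ref{relevementRayon2}. Since $k$ is stably embedded in $\mathcal M$ with exactly its pure field structure, Proposition~\ref{stablementPlonge} already gives that the $A$-definable set $X\subset k$ is $ac(K(A))$-definable in the pure field $k(\mathcal M)$; what must be shown is that, under the hypothesis, this can be improved to definability over $res(K(A))$, i.e. that the canonical parameter of $X$ (computed in the pure field $k^{eq}$) lies in $\dcl^{eq}(res(K(A)))$. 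As $\mathcal M$ is $\aleph_1$-saturated and strongly $|A|^+$-homogeneous, this amounts to showing: any two finite tuples from $k$ having the same pure-field type over $res(K(A))$ are $A$-conjugate in $\mathcal M$. The assumption that $ac(K(A))$ is contained in $res(K(A))\cdot H$ is exactly what makes this step go through — it fails in the example of Subsection~\ref{contreEx}.

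\emph{Passage to a Hahn field.} Following the proof of Lemma~\ref{zetaRV}: after enlarging $\mathcal M$ if necessary (Fact~\ref{existencePasExtension}) and applying Proposition~\ref{plongementHahnSimple} to a lift of the residue field, we may realize $\mathcal M$ as an elementary submodel of the Hahn field $\mathcal M_1=k(\mathcal M)((t^{\Gamma(\mathcal M)}))$; then, over a saturated, strongly homogeneous $\mathcal N\succeq\mathcal M_1$, Proposition~\ref{plongementHahn} supplies a Hahn field $\mathcal M_2\supseteq\mathcal M_1$ together with an elementary embedding $\mathcal N\to\mathcal M_2$ that is the identity on $\mathcal M_1$. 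Replacing the two tuples $\bar\alpha,\bar\beta$ by $A$-conjugates realized inside $\mathcal M_2$, it remains to exhibit a valued-field automorphism $\varphi$ of $\mathcal M_2$ fixing $A$ pointwise and carrying $\bar\alpha$ to $\bar\beta$.

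\emph{Construction of $\varphi$.} I would build $\varphi$ as a composite $aut^1_{k(\mathcal M_2),\Gamma(\mathcal M_2)}(\theta)\circ aut^2_{k(\mathcal M_2),\Gamma(\mathcal M_2)}(\rho)$, where $\theta\in Aut(k(\mathcal M_2))$ is obtained by homogeneity of $k(\mathcal M_2)$ so as to fix $res(K(A))$ pointwise and send $\bar\alpha$ to $\bar\beta$, and $\rho\in Hom(\Gamma(\mathcal M_2),k(\mathcal M_2)^*)$ is an angular-component correction whose role is to repair the action of $aut^1(\theta)$ on the field parameters in $K(A)\subset\mathcal M_1$. Writing an element $c\in K(A)$ as a Hahn series, the defect $aut^1(\theta)(c)\cdot c^{-1}$ is controlled, term by term, by the ratios $ac(c')\cdot\theta(ac(c'))^{-1}$ attached to elements $c'\in K(A)$; since $\theta$ fixes $res(K(A))$ while the hypothesis forces every element of $ac(K(A))$ to be congruent modulo $H$ to a residue of an element of $\mathcal O_A$, all of these ratios lie in the divisible $\emptyset$-type-definable group $H=\bigcap_{N>0}(k^*)^N$. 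Hence, by Lemma~\ref{normalisateurDiv} applied to $H$ (and splitting off the value-group directions not coming from $K(A)$, as in the proof of Lemma~\ref{zetaRV}), one can choose $\rho$ so that $\varphi$ fixes $K(A)$ pointwise — and, with a little more care, all of $A$ — while still moving $\bar\alpha$ to $\bar\beta$. The main obstacle is precisely this construction of $\rho$: one must verify that the prescribed corrections are mutually consistent — that they define a genuine homomorphism on $val(K(A))$ — and that they can be realized simultaneously for all of $K(A)$ while remaining inside $H$. This is the step where the hypothesis on $ac(K(A))$ is indispensable, and where the contrast with Subsection~\ref{contreEx}, in which $ac(K(A))\not\subset res(K(A))\cdot H$, becomes visible.
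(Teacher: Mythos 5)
Your reduction and the passage to Hahn fields are fine, but the overall strategy diverges from the paper's, and the divergence is where the gap lies. The paper does \emph{not} construct an automorphism fixing $A$ and moving $\bar\alpha$ to $\bar\beta$. Instead it builds an elementary embedding $\tau\circ\sigma$ of $\mathcal M$ into a Hahn field for which the \emph{image} of $K(A)$ lies in a subfield where $ac$ and $res$ coincide (using \ref{valeursLibres} to send generators $a_i/a_i'$ to monomials $r_i t^{\gamma_i}$, then an $aut^2$ rescaling to strip off the $r_i\in H$, then \ref{extensionAC}), and then simply quotes the $ac$-clause of Proposition \ref{stablementPlonge} for the translated set $\tau\circ\sigma(X)$ and pulls back by elementarity. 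Nothing is required to be fixed pointwise; the identity $X=(\tau\circ\sigma)^{-1}(\tau\circ\sigma(X))$ does all the work.

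Your route, by contrast, requires an honest automorphism $\varphi$ of $\mathcal M_2$ fixing $A$ pointwise, and this is where the argument breaks. You want $\varphi=aut^1(\theta)\circ aut^2(\rho)$ to fix every $c=\sum_\alpha a_\alpha t^\alpha\in K(A)$. Writing out the composite, one gets $\sum_\alpha \theta(a_\alpha)\theta(\rho(\alpha))t^\alpha$, so fixing $c$ forces $\rho(\alpha)=\theta^{-1}(a_\alpha)/a_\alpha$ for every $\alpha$ in the support of $c$. The trouble is that for $\alpha$ strictly beyond the leading index $val(c)$, the coefficient $a_\alpha$ is not an angular component of any element of $K(A)$: it is $ac(c-\ell)$ for a truncation $\ell$ that generally does not lie in $K(A)$. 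Your assertion that the defect is ``controlled, term by term, by the ratios $ac(c')\cdot\theta(ac(c'))^{-1}$ attached to elements $c'\in K(A)$'' is true only for the leading coefficient; the deeper coefficients are arbitrary elements of $k(\mathcal M)$ over which $\theta$ acts with no constraint and which $ac(K(A))$, $res(K(A))$, and $H$ do not govern. Consequently the required values of $\rho$ need not be mutually consistent (different $c,c'\in K(A)$ with the same $\alpha$ in their supports will in general demand incompatible $\rho(\alpha)$), and they need not lie in $H$. You flag exactly this as ``the main obstacle'' and then wave it away with ``with a little more care''; but it cannot be repaired within this scheme, which is precisely why the paper avoids fixing $K(A)$ altogether and instead moves it elementarily to a place where Proposition \ref{stablementPlonge} already gives the stronger conclusion.
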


\begin{proof}
Let $X$ be an $A$-definable subset of $k$. Any element of $\mathbb{Q}\otimes val(K(A))$ is a $\mathbb{Q}$-linear combination of elements from $val(K(A))$, so there exists $(\gamma_i)_i$ a family of values from $val(K(A))$ that is a $\mathbb{Q}$-base of $\mathbb{Q}\otimes val(K(A))$. In other words, $\mathbb{Q}\otimes val(K(A))$ is the direct sum of the Abelian groups $(\mathbb{Q}\gamma_i)_i$. Let $a_i\in K(A)$ so that $val(a_i)=\gamma_i$, and $a'_i\in \mathcal{O}_A$ so that $res(a'_i)\mod H=ac(a_i)\mod H$. Let $r_i=ac\left(\frac{a_i}{a'_i}\right)\in H$.
\par Write for short $k'=k(\mathcal{M})$, $G=\Gamma(\mathcal{M})$, and let us identify $\mathcal{M}$ with a Pas elementary substructure of $k'((t^G))$ by using \ref{plongementHahnSimple}. Note that the Hahn series in $k'((t^G))$ that have finite support belong to $\mathcal{M}$ by the conditions of \ref{plongementHahnSimple}.
\par By \ref{valeursLibres}, and by strong-homogeneity, there exists $\sigma\in Aut(\mathcal{M})$ so that $\sigma\left(\frac{a_i}{a'_i}\right)=r_it^{\gamma_i}$. By \ref{normalisateurDiv} applied to $H$, as $\mathbb{Q}\gamma_i$ is isomorphic to $\mathbb{Q}$, choose $\tau_i\in Hom(\mathbb{Q}\gamma_i, k'^*)$ so that $\tau_i(\gamma_i)=r_i^{-1}$. By the universal property of the direct sum, we can find $\tau'\in Hom\left(\mathbb{Q}\otimes val(K(A)), k'^*\right)$ extending each of the $\tau_i$. Now, choose $B$ a $\mathbb{Q}$-vector-subspace of $\mathbb{Q}\otimes G$ so that $\mathbb{Q}\otimes G=(\mathbb{Q}\otimes val(K(A)))\oplus B$, and extend $\tau'$ to $\tau''\in Hom\left(\mathbb{Q}\otimes G, k'^*\right)$ so that $\tau''_{|B}=1$. Let $\tau=aut^2_{k', G}(\tau''_{|G})$. Then $\tau$ is an automorphism\footnote{The map $\tau$ is an automorphism of valued fields, but it is not an automorphism of Pas fields.} of $k'((t^G))$ leaving $k'$, $G$ pointwise-invariant, and sending each $r_i t^{\gamma_i}$ to $t^{\gamma_i}$. Let $L$ be the subfield of $K(A)$ generated by the $\frac{a_i}{a'_i}$. Then we clearly have $res(\tau\circ\sigma (L))=ac(\tau\circ\sigma (L))$. By \ref{extensionAC}, we have $res(\tau\circ\sigma (K(A)))=ac(\tau\circ\sigma (K(A)))$. By \ref{stablementPlonge}, $\tau\circ\sigma(X)$ is $res(\tau\circ\sigma (K(A)))$-definable in the pure field $k'$. Now, as $\mathcal{M}$ is an elementary substructure, $\sigma$ and hence $\tau\circ\sigma$ is an elementary map, so $X$ is $res(K(A))$-definable in the pure field $k(\mathcal{M})$.
\end{proof}

\begin{lemma}\label{stPlsyntaxique}
Let $\mathcal{M}\models HF_{0, 0}$, and $A\subset\mathcal{M}^{eq}$ so that $A=\acl^{eq}(K(A))$. Suppose $[k^*:(k^*)^N]$ is finite for each $N>0$. Suppose for each $N>0$, $\alpha\in \faktor{k^*}{(k^*)^N}$, there exists $a\in \mathcal{O}_A$ so that $res(a)\mod (k^*)^N=\alpha$. Then any $A$-definable subset of $k$ is $res(K(A))$-definable in the pure field $k(\mathcal{M})$.
\end{lemma}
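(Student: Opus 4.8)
The plan is to derive the statement from Lemma~\ref{semantiqueFortementStPlonge} by passing to a sufficiently saturated, homogeneous elementary extension and checking that lemma's hypothesis.

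Since every hypothesis and the conclusion persist under elementary extension — the conclusion transfers back down along $\mathcal{M}\preceq\mathcal{N}$, the condition that $[k^*:(k^*)^N]$ is finite is first-order, and the surjectivity of $res(\mathcal{O}_A)$ onto $k^*/(k^*)^N$ survives because the natural map $k(\mathcal{M})^*/(k(\mathcal{M})^*)^N\to k(\mathcal{N})^*/(k(\mathcal{N})^*)^N$ is a bijection, being an injection of finite groups of equal cardinality — I would begin by replacing $\mathcal{M}$ by an $\aleph_1$-saturated, strongly $|A|^+$-homogeneous elementary extension. Then $ac$ and $s$ exist by Fact~\ref{existencePasExtension}, $A$ is still $\acl^{eq}(K(A))$, and the $\emptyset$-type-definable group $H=\bigcap_{N>0}(k^*)^N$ is divisible: given $\xi\in H$ and $M>0$, the countable type in $y$ saying $y^M=\xi$ and $y\in(k^*)^N$ for all $N$ is finitely satisfiable, since $\xi$ is a $(MN_1\cdots N_r)$-th power for any finite subfamily $N_1,\dots,N_r$. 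It then suffices to verify the hypothesis of Lemma~\ref{semantiqueFortementStPlonge}: that every $r\in ac(K(A))$ is congruent modulo $H$ to $res(a)$ for some $a\in\mathcal{O}_A$; the lemma then gives the conclusion.

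So fix $x\in K(A)^*$ and set $r=ac(x)$. For each $N>0$ the surjectivity hypothesis furnishes $a_N\in\mathcal{O}_A^*$ with $res(a_N)\equiv r\pmod{(k^*)^N}$, and these residues are coherent modulo the chain of subgroups $(k^*)^N$ (as $(k^*)^M\subseteq(k^*)^N$ when $N\mid M$). The crux is to amalgamate them into a single $a\in\mathcal{O}_A^*$ with $res(a)\equiv r\pmod{(k^*)^N}$ for all $N$ at once, i.e. $res(a)\equiv r\pmod H$.

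This amalgamation is the step I expect to be the main obstacle, and it is where both remaining hypotheses are used. Finiteness of $[k^*:(k^*)^N]$ makes $res(\mathcal{O}_A^*)\cap(k^*)^N$ of finite index in $res(\mathcal{O}_A^*)$, so that the coherent family of cosets $(res(a_N)(k^*)^N)_N$ is a branch through a finitely branching inverse system and can be followed by a K\"onig/compactness argument; and $A=\acl^{eq}(K(A))$ — equivalently $K(A)=K(A)^{alg}\cap K(\mathcal{M})$, so that any $N$-th root in $K(\mathcal{M})$ of an element of $K(A)$ already lies in $K(A)$ — is what lets the limiting residue be realised inside $\mathcal{O}_A$ rather than merely in a saturated extension (here the Henselianity of $K(A)$, relatively algebraically closed in the Henselian $\mathcal{M}$, is used to pass between $N$-th powers in $\mathcal{O}_A^*$ and $N$-th powers in its residue field). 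With the amalgamation carried out for each $x\in K(A)^*$, the hypothesis of Lemma~\ref{semantiqueFortementStPlonge} is satisfied and that lemma yields the conclusion; the remaining work is routine, relying on the machinery already set up — Proposition~\ref{flenner}, Proposition~\ref{stablementPlonge}, and the Hahn-field embeddings of Proposition~\ref{plongementHahnSimple} — inside the proof of Lemma~\ref{semantiqueFortementStPlonge}.
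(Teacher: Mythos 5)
The gap is precisely at the step you flag as ``the main obstacle,'' and I do not think your sketch closes it. What you need is: for $r=ac(x)$ with $x\in K(A)^*$, the coset $rH$ meets $res(\mathcal{O}_A^*)$. Writing $k_A=res(\mathcal{O}_A)$, relative algebraic closure and Henselianity of $K(A)$ give $(k_A^*)^N=k_A^*\cap(k^*)^N$, so the hypothesis of the lemma produces, for each $N$, a coset $r(k^*)^N\cap k_A^*\neq\emptyset$ of $(k_A^*)^N$ in $k_A^*$; these form a single coherent thread in the inverse system $\varprojlim k_A^*/(k_A^*)^N$ (no K\"onig argument is actually needed — the thread is determined by $r$). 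But an element of $\varprojlim k_A^*/(k_A^*)^N$ need not be in the image of $k_A^*$: the natural map $k_A^*/\bigcap_N(k_A^*)^N\to\varprojlim k_A^*/(k_A^*)^N$ is injective and dense but typically far from surjective (think of $\mathbb{Z}\hookrightarrow\widehat{\mathbb{Z}}$). Relative algebraic closure and Henselianity give nothing resembling completeness of $k_A^*$ in this profinite topology, so there is no reason the thread you describe should be realized by an actual element of $res(\mathcal{O}_A^*)$. In short, the hypothesis of Lemma~\ref{semantiqueFortementStPlonge} need not hold for $A$ itself, and the direct verification you propose cannot work in general.

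The paper's proof takes a different route to avoid exactly this problem. Rather than trying to realize in $K(A)$ the type $p$ asserting a full system of representatives for $k^*/H$ in $\mathcal{O}$, it only uses that $p$ is \emph{finitely} satisfiable in $K(A)$ — which is exactly what your $a_N$'s give. One then writes a second type $q$ in the same variables $(x_\alpha)_\alpha$ whose realizations are tuples over which $X$ remains non-$res$-definable; Lemma~\ref{semantiqueFortementStPlonge} says $p\cup q$ is inconsistent; compactness then yields a finite part of $p$ already inconsistent with $q$, and any tuple in $K(A)$ satisfying that finite part (available by the hypothesis) witnesses that $X$ is $res(K(\acl(A\cdot\text{tuple})))=res(K(A))$-definable. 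So the ``routine remaining work'' you defer is really the core of the argument: it replaces the unrealizable global amalgamation by a finite-satisfiability-plus-compactness argument that is tailored to the actual hypothesis you are given.
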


\begin{proof}
By the finiteness hypothesis, we can freely replace $\mathcal{M}$ by an $\aleph_1$-saturated, strongly-$|A|^+$-homogeneous elementary extension. Let $X$ be an $A$-definable subset of $k$, and $H=\bigcap\limits_{N>0}(k^*)^N$. For each $N>0$, let $\pi_N$ be the canonical projection $\faktor{k^*}{H}\longrightarrow\faktor{k^*}{(k^*)^N}$. Let $p$ be the partial type
$$
\left\lbrace res(x_\alpha)\mod (k^*)^N= \pi_N(\alpha)|\alpha\in \faktor{k^*(\mathcal{M})}{H(\mathcal{M})}, N>0\right\rbrace
$$
The partial type $p$ does not necessarily have a realization in $K(A)$, but it is finitely satisfiable in $K(A)$ by hypothesis on $A$.
\par Let $E_N$ be the set of every formula with parameters in $K(A)$ and variables in $(x_\alpha)_{\alpha\in \faktor{k^*(\mathcal{M})}{H(\mathcal{M})}}$, $y_1...y_N$. For each $\phi\in E_N$ and $m>0$, let $\bar{\phi}_m$ be the formula in the variables $(x_\alpha)_{\alpha\in \faktor{k^*}{H}}$ stating that the set:
$$
\left\lbrace y_1...y_N\in K|\phi((x_\alpha)_\alpha, y_1...y_N)\right\rbrace
$$
has exactly $m$ elements. Let $F_N$ be the set of every formula (in the language of the pure field $k$) without parameters, with variables $y_1...y_N\in k$, $z\in k$. Let $q$ be the following partial type in the variables $(x_\alpha)_\alpha$:
$$
\left\lbrace
\begin{array}{l|l}
\bar{\phi}_m\Longrightarrow[ & N, m>0\\
\forall y_1...y_N\in \mathcal{O}\ (\phi((x_\alpha)_\alpha, y_1...y_N)\Longrightarrow & \phi\in E_N\\
\neg [\forall z\in k\ (z\in X\Longleftrightarrow \psi(res(y_1)...res(y_N), z))])] & \psi\in F_N
\end{array}
\right\rbrace
$$
The realizations of $q$ are the tuples $(a_\alpha)_\alpha$ for which $X$ is not $res(K(\acl(A(a_\alpha)_\alpha)))$-definable. The definition of $q$ can be read “for all $y_i$, if $y_i$ is $A(x_\alpha)_\alpha$-algebraic via the formula $\phi$, then the formula $\psi(res(y_i), z)$ does not define $X$".
\par Now the Lemma \ref{semantiqueFortementStPlonge} implies that the type $p\cup q$ is inconsistent. By compactness, as $p$ is finitely satisfiable in $K(A)$, $K(A)$ has a tuple that is not a realization of $q$. This concludes the proof.
\end{proof}

\begin{corollary}\label{dominationResiduCorpique}
Let $\mathcal{M}\models HF_{0, 0}$, and $A\subset \mathcal{M}$, so that $A=\acl(K(A))$. Suppose $[k^* : (k^*)^N]$ is finite for every $N>0$, and $A$ satisfies the conditions of the lemma. Then, for all $r$, $s\in k(\mathcal{M})$, if $r\equiv_{k(A)} s$ in the field $k(\mathcal{M})$, then we have $r\equiv_A s$ in the valued field $\mathcal{M}$.
\end{corollary}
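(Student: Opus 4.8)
The plan is to run the same argument that proves the $\Gamma$-analogue, Corollary \ref{dominationValeurCorpique}, but feeding in Lemma \ref{stPlsyntaxique} where that proof uses Proposition \ref{stablementPlonge}. The underlying principle is that the $\mathcal{M}$-type over $A$ of an element of the residue sort is determined by its type, in the pure field $k(\mathcal{M})$, over a parameter set contained in $k(A)$. Note that Proposition \ref{stablementPlonge} alone is not enough for this: it only gives control over the pure-field type over all of $k(\mathcal{M})$, which is exactly why the stronger stable-embeddedness conclusion of Lemma \ref{stPlsyntaxique} is needed here.

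First I would reduce to the setting of Lemma \ref{stPlsyntaxique}. Put $\hat{A}=\acl^{eq}(A)$; since $A=\acl(K(A))$ is algebraically closed, one has $K(\hat{A})=K(A)$ and $\hat{A}=\acl^{eq}(K(\hat{A}))$, and because $\mathcal{O}_A\subset\mathcal{O}_{\hat{A}}$ the set $\hat{A}$ still satisfies the divisibility condition imposed on $A$ in the hypotheses. So Lemma \ref{stPlsyntaxique} applies to $\hat{A}$ and gives that every $\hat{A}$-definable subset of $k$ --- hence a fortiori every $A$-definable one --- is definable in the pure field $k(\mathcal{M})$ using parameters from $res(K(A))$. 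Next I would record the inclusion $res(K(A))\subset k(A)$: if $a\in\mathcal{O}_A$ then $res(a)$ lies in $\dcl(a)\subset\dcl(A)\subset\acl(A)=A$ (using idempotence of $\acl$ and $A=\acl(K(A))$) and it lies in the residue sort, hence $res(a)\in k(\mathcal{M})\cap A=k(A)$. Combining these two observations, every $A$-definable subset of $k(\mathcal{M})$ is $k(A)$-definable in the pure field $k(\mathcal{M})$.

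Finally, suppose $r\equiv_{k(A)}s$ in the pure field $k(\mathcal{M})$, and let $\phi(x)$ be an arbitrary formula over $A$ in the language of valued fields with $x$ of sort $k$. Its solution set is an $A$-definable subset of $k$, so by the previous paragraph it equals $\{x\in k(\mathcal{M}): k(\mathcal{M})\models\psi(x;\bar{b})\}$ for some pure-field formula $\psi$ and some tuple $\bar{b}\in k(A)$; then $\mathcal{M}\models\phi(r)$ iff $k(\mathcal{M})\models\psi(r;\bar{b})$ iff $k(\mathcal{M})\models\psi(s;\bar{b})$ iff $\mathcal{M}\models\phi(s)$. As $\phi$ was arbitrary, this yields $r\equiv_A s$ in $\mathcal{M}$. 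There is no genuine obstacle in this argument, since Lemma \ref{stPlsyntaxique} carries all the weight; the one point requiring a little care is the passage between $\acl$ and $\acl^{eq}$, which is handled by the reduction to $\hat{A}$ in the first step.
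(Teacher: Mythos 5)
Your argument is correct and is exactly the (omitted) proof the paper intends: apply Lemma \ref{stPlsyntaxique} after harmlessly replacing $A$ by $\acl^{eq}(A)$, note that $res(K(A))\subset k(A)$, and then transfer agreement of $k(A)$-types in the pure residue field to agreement of $A$-types in $\mathcal{M}$ via the strong stable embeddedness. You are more explicit than the paper about the $\acl$-versus-$\acl^{eq}$ bookkeeping and about checking that the coset-lifting hypothesis passes to $\hat{A}$, but these are routine verifications and the underlying argument is the same as the paper's implicit one (and parallels how Corollary \ref{dominationValeurCorpique} follows from Proposition \ref{stablementPlonge}).
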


\begin{remark}
In the example of subsection \ref{contreEx}, we have $u\equiv_{k(A)} u+1$, but $u\not\equiv_A u+1$.
\end{remark}

The lemma also allows us to prove an analogue of \ref{relevementCentre}. It will be useful for one of our main results (Theorem \ref{main1Bis}).

\begin{corollary}\label{relevementRV}
With the same hypothesis as the lemma, suppose additionally:
\begin{itemize}
\item For every $\gamma\in\Gamma(\dcl(\emptyset))$, there exists $a\in K(A)$ so that $val(a)=\gamma$.
\item In the pure field $k(\mathcal{M})$, for any subfield $B$ for which $B^{alg}\cap k(\mathcal{M})=B$, we have $B=\acl(B)$ in the theory of the field $k(\mathcal{M})$.
\end{itemize}
Then, for any $\alpha\in \RV(A)$, there exists $a\in K(A)$ so that $rv(a)=\alpha$.
\end{corollary}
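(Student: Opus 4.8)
The plan is to split $\alpha\in\RV(A)$ along the $\emptyset$-definable exact sequence $1\to k^*\to\RV\xrightarrow{val}\Gamma\to 0$: I would lift the value $\delta:=val(\alpha)\in\Gamma(A)$ and the ``residual part'' separately to $K(A)$, and then multiply the two lifts.

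For the value, I would pass to an $\aleph_1$-saturated elementary extension $\mathcal{M}'\succeq\mathcal{M}$, in which $ac$ and $s$ exist (Fact \ref{existencePasExtension}), and check that the three hypotheses of Proposition \ref{relevementRayon2} hold for $A$ in $\mathcal{M}'$: $ac$ and $s$ exist; the first additional hypothesis of the corollary is literally its second hypothesis; and the lifting condition of Lemma \ref{stPlsyntaxique} imposed on $A$ provides, for every $N>0$ and every $\alpha\in ac(K(A))\subseteq k^*$, a unit of $\mathcal{O}_A$ with the same image modulo $(k^*)^N$, which is its third hypothesis (finiteness of $[k^*:(k^*)^N]$ guarantees this condition carries over to $\mathcal{M}'$). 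Then \ref{relevementRayon2} yields $a_0\in K(A)$ with $val(a_0)=\delta$; as $K(A)$, $\Gamma(A)$ and $val$ are unchanged, the same $a_0$ witnesses this in $\mathcal{M}$, so from now on I would work back in $\mathcal{M}$.

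For the residual part, set $\beta:=\alpha\cdot rv(a_0)^{-1}\in\RV$. Since $val(\beta)=0$, $\beta$ lies in $\ker(val\colon\RV\to\Gamma)$, which is $\emptyset$-definably isomorphic to $k^*$, and $\beta\in\dcl^{eq}(\alpha,a_0)\subseteq A$; so $\beta$ identifies with an element of $k(A)$, and $\{\beta\}$ is an $A$-definable subset of $k$. By Lemma \ref{stPlsyntaxique}, $\{\beta\}$ is $res(K(A))$-definable in the pure field $k(\mathcal{M})$, hence $\beta\in\dcl_k(res(K(A)))$. To upgrade this to $\beta\in res(K(A))$ I would first prove the auxiliary fact that $res(K(A))$ is relatively algebraically closed in $k(\mathcal{M})$: since $A=\acl^{eq}(K(A))$, the corollary $K(\acl(C))=C^{alg}\cap K(\mathcal{M})$ following \ref{flenner} (applied with $C=K(A)$, using $\acl(K(A))\subseteq A$) gives $K(A)=K(A)^{alg}\cap K(\mathcal{M})$, so $K(A)$ is Henselian (its Henselization is an immediate algebraic subextension of $K(\mathcal{M})$, hence lies in $K(A)^{alg}\cap K(\mathcal{M})=K(A)$); and for an extension of Henselian valued fields of residue characteristic zero, a relatively algebraically closed subfield has relatively algebraically closed residue field (lift the minimal polynomial of an algebraic residue element to the valuation ring of the subfield; residue characteristic zero makes that element a simple root, so Hensel's lemma produces a root of the lift in the larger field, which then lies in the subfield by relative algebraic closedness). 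Granting this, the second additional hypothesis of the corollary, applied to $B=res(K(A))$, gives $res(K(A))=\acl_k(res(K(A)))$, so $\beta\in\dcl_k(res(K(A)))\subseteq res(K(A))$, and I can pick $b\in\mathcal{O}_A^*$ with $res(b)=\beta$.

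Finally, $a:=a_0 b\in K(A)$ satisfies $rv(a)=rv(a_0)\,rv(b)=rv(a_0)\,\beta=\alpha$, using commutativity of $\RV$. I expect the main obstacle to be the auxiliary fact that $res(K(A))$ is relatively algebraically closed in $k(\mathcal{M})$: it is the hinge converting the ``definable over $res(K(A))$ in the pure field'' output of Lemma \ref{stPlsyntaxique} into genuine membership in $res(K(A))$, and it is the one place requiring valuation-theoretic input beyond the preliminaries; the remainder is routine bookkeeping, the only mild subtlety being the transfer of hypotheses to a saturated extension for the value-lifting step.
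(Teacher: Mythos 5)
Your proof is correct and follows essentially the same route as the paper's: scale by a lift of the value obtained from Proposition~\ref{relevementRayon2} to reduce to a residual part $\beta\in k(A)$, apply Lemma~\ref{stPlsyntaxique} to see that $\beta$ is algebraic (indeed definable) over $res(K(A))$ in the pure residue field, invoke the second extra hypothesis, and pull a Hensel root back into $K(A)$ using $K(A)=K(A)^{alg}\cap K(\mathcal{M})$. The only organizational difference is that you package the Hensel step as the auxiliary fact that $res(K(A))$ is relatively algebraically closed in $k(\mathcal{M})$ and then apply the hypothesis to $B=res(K(A))$ directly, whereas the paper applies the hypothesis to $B=res(K(A))^{alg}\cap k(\mathcal{M})$ first and performs the Hensel lifting afterwards; also, your detour through proving that $K(A)$ is Henselian is harmless but unnecessary, since the Hensel lifting you invoke takes place in $K(\mathcal{M})$, and relative algebraic closedness of $K(A)$ in $K(\mathcal{M})$ alone suffices to land the root back in $K(A)$.
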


\begin{proof}
Let $\alpha\in \RV(A)$. By \ref{relevementRayon2}, any value of $\Gamma(A)$ can be pulled back to $K(A)$, so we can assume by scaling everything that $val(\alpha)=0$, ie $\alpha\in k^*$. Now, $\alpha\in\acl^{eq}(K(A))$, so by the lemma we actually have $\alpha\in \acl(res(K(A)))$ in the pure field structure of $k(\mathcal{M})$. By hypothesis, $\alpha\in res(K(A))^{alg}\cap k(\mathcal{M})$. Therefore, we have a polynomial $P$ with non-zero coefficients in $\mathcal{O}_\mathcal{M}\cap K(A)$ so that $\alpha$ is a root of $res(P)$. If we choose such a $P$ as a pullback of the minimal polynomial of $\alpha$ over $res(K(A))$, then, by Henselianity and as we are in residue characteristic zero, there exists a root of $P$ in $K(\mathcal{M})$ having residue $\alpha$. As $P$ is chosen whith coefficients in $K(A)$, and $A=\acl^{eq}(K(A))$, this root is in $K(A)$, which concludes the proof.
\end{proof}

\begin{corollary}\label{relevementCentre2}
With the same hypothesis as the above corollary, every $A$-ball is pointed.
\end{corollary}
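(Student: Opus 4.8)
The plan is to suppose for contradiction that $X$ is an $A$-ball with no point in $K(A)$, and to produce such a point. Since the property of having a point in $K(A)$ is unaffected by passing to an elementary extension (the canonical parameter of $X$ and the set $K(A)$ remain in the base), we may replace $\mathcal{M}$ by a sufficiently saturated and strongly homogeneous elementary extension, so in particular $ac$ and $s$ exist. If $X$ is a singleton, its unique point lies in $\dcl^{eq}(A)\subset\acl^{eq}(A)=A$ and in $K(\mathcal{M})$, hence in $K(A)$; and if $\Gamma(\mathcal{M})$ is dense and $X$ is closed, then $X$ is pointed by Corollary \ref{relevementCentre}. So we may assume that $X$ is a proper ball which is open, or that $\Gamma(\mathcal{M})$ is discrete. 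In both of these situations the smallest ball $X^{+}$ strictly containing $X$ exists and, being definable over the canonical parameter of $X$, is an $A$-ball. Now pick $c\in K(\mathcal{M})$ that is $A$-generic of $\{X\}$, and let $\mathfrak{B}$ be the chain of pointed $A$-balls containing $c$; then $c$ is automatically weakly $A$-generic of $\mathfrak{B}$, and I would split the argument according to whether $c$ is $A$-generic of $\mathfrak{B}$.

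If $c$ is not $A$-generic of $\mathfrak{B}$, then Proposition \ref{fortVSFaible} applies: $\mathfrak{B}$ is residual, its least element $X_{0}$ is a pointed $A$-ball, and the largest open ball containing $c$ which is strictly contained in $X_{0}$ is an $A$-ball of which $c$ is $A$-generic; as $c$ is a common $A$-generic of this ball and of $\{X\}$, Remark \ref{intersectionsEgales} identifies it with $X$. Hence $X$ is a maximal proper subball of the pointed ball $X_{0}$. I would then fix $a\in K(A)\cap X_{0}$, use Proposition \ref{relevementRayon2} to get $b\in K(A)$ with $val(b)=rad(X_{0})$, and set $\beta:=res((c-a)/b)\in k(\mathcal{M})$. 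If $\beta=0$ then $a\in X$, a contradiction; otherwise $X=\{x\in X_{0}\mid res((x-a)/b)=\beta\}$, which exhibits $\beta$ as an element of $\dcl^{eq}(A)\cap k$. By Lemma \ref{stPlsyntaxique} the singleton $\{\beta\}$ is $res(K(A))$-definable in the pure field $k(\mathcal{M})$, so $\beta$ is algebraic over $res(K(A))$, and the hypothesis that relatively algebraically closed subfields of $k(\mathcal{M})$ are algebraically closed (in the sense of $\acl$) then gives $\beta\in res(K(A))^{alg}\cap k(\mathcal{M})$. Lifting the minimal polynomial of $\beta$ over $res(K(A))$ — separable, the residue field having characteristic zero — to a monic polynomial over $K(A)$ and applying Hensel's lemma, we obtain $\tilde{\beta}\in K(\mathcal{M})$ with $res(\tilde{\beta})=\beta$; since $K(A)$ is relatively algebraically closed in $K(\mathcal{M})$ (a consequence of $A=\acl^{eq}(K(A))$), in fact $\tilde{\beta}\in K(A)$, and then $a+b\tilde{\beta}\in K(A)\cap X$, contradicting our assumption.

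If $c$ is $A$-generic of $\mathfrak{B}$, then $c$ is a common $A$-generic of $\{X\}$ and of $\mathfrak{B}$, so Remark \ref{intersectionsEgales} gives $\cap\mathfrak{B}=X$. Were $\mathfrak{B}$ to have a least element, this would be a pointed $A$-ball equal to $\cap\mathfrak{B}=X$, contradicting that $X$ is not pointed. Otherwise, every $B\in\mathfrak{B}$ contains $c$ and hence contains $X$ (since $c$ is $A$-generic of $\{X\}$), while $B\neq X$ because $X$ is not pointed; so $X\subsetneq B$, and therefore $X^{+}\subseteq B$. This forces $X^{+}\subseteq\cap\mathfrak{B}=X\subsetneq X^{+}$, which is absurd. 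In every case the assumption fails, so every $A$-ball is pointed.

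The step I expect to be the main obstacle is the geometry in the non-generic case: verifying uniformly (across the open/closed and dense/discrete distinctions) that $X$ is exactly one maximal proper subball of the pointed ball $X_{0}$, and that the residue $\beta$ cutting it out genuinely lies in $\dcl^{eq}(A)$, so that strong stable embeddedness (Lemma \ref{stPlsyntaxique}) and the field-lifting mechanism of Corollary \ref{relevementRV} can be brought to bear.
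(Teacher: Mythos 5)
Your proof is correct and follows essentially the same strategy as the paper's: reduce to the situation where the putative non‑pointed $A$-ball $X$ is a maximal proper subball of a pointed closed $A$-ball $X_0$, pick $a\in K(A)\cap X_0$ and $b\in K(A)$ of value $rad(X_0)$ via Proposition~\ref{relevementRayon2}, and lift the residue class cutting out $X$ inside $X_0$ to $K(A)$ using strong stable embeddedness plus the relative-algebraic-closure hypothesis. The paper's own proof is terser: it dispenses with the generic point $c$ and the dichotomy on $A$-genericity of $\mathfrak{B}$ (your Case~2 is a vacuity argument that it skips), arguing directly that when $Y$ is open the minimal closed ball $X\supsetneq Y$ is pointed (by a Flenner-style reasoning as in the proof of Proposition~\ref{fortVSFaible}), and then it invokes Corollary~\ref{relevementRV} as a black box to pull back $rv(Y-a)$ to $K(A)$, rather than re-deriving the Hensel lift from Lemma~\ref{stPlsyntaxique} as you do. Both routes use the same hypotheses; yours is a bit more verbose but also more self-contained on the lifting step.
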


\begin{proof}
Let $Y$ be an $A$-ball. If $Y$ is not open, then $\Gamma$ is dense, so we can apply \ref{relevementCentre} to conclude. Else, by using \ref{flenner} with a reasoning similar to \ref{relevementCentre}, we can prove that $X$ the minimal closed ball strictly containing $Y$ is pointed. Let $a\in K(A)\cap X$. We can use the above corollary to find $a'\in K(A)$ so that $rv(a')=rv(Y-a)$, and we can conclude as $a'+a\in Y$.
\end{proof}

\section{Forking and dividing}\label{sectForking}

\subsection{Basic facts}
We will explicitly state each property of forking and dividing that will be needed in this paper. For a more general overview of these notions, we refer to \cite{Tent2012ACI} and \cite{Casanovas2011SimpleTA}.
\par Let us recall the definition of forking and dividing. In order to simplify the definitions, we will fix $\kappa$ an arbitrarily large infinite cardinal, and $\mathcal{M}$ a $\kappa$-saturated, strongly-$\kappa$-homogeneous structure. A set $A$ will be called \textit{small} if $|A|<\kappa$.

\begin{definition}
Let $A$ be a small subset of $\mathcal{M}$, and $X$ a definable subset of $\mathcal{M}$ in a small number of variables (this number can be infinite, it does not matter as only a finite number of those variables can appear in the formula defining $X$). We say that $N<\omega$, and $(\sigma_n)_{n<\omega}\in Aut(\mathcal{M}/A)^\omega$ constitute a \textit{witness for division} of $X$ over $A$ when $\sigma_0=id$, and, for all $P\in\mathcal{P}(\omega)$, if $|P|=N$, then $\bigcap\limits_{n\in P}\sigma_n(X)=\emptyset$. We say that $X$ \textit{divides} over $A$ when such a witness for division exists. We say that $X$ \textit{forks} over $A$ if there exists a finite family $X_1...X_n$ of definable subsets of $\mathcal{M}$ so that $X\subset\bigcup\limits_i X_i$, and $X_i$ divides over $A$ for all $i$.
\par We shall say that a partial type $p$ \textit{divides} (resp. \textit{forks}) over $A$ if some finite intersection of definable subsets from $p$ divides (resp. forks) over $A$.
\par Let $B, C$ be small subsets of $\mathcal{M}$. We note $\indep{C}{A}{B}{d}$ (resp. $\indep{C}{A}{B}{f}$) if $tp(C/AB)$ does not divide (resp. does not fork) over $A$.
\end{definition}

\begin{remark}\label{bonnesExtensions}
If $\pi$ is a partial type over $AB$ that does not fork over $A$, then there exists a completion of $\pi$ over $AB$ that does not fork over $A$.
\par This extension property might not hold for dividing, in fact the purpose of the definition of forking is to satisfy this property. Forking can be defined as the subrelation of $\indep{}{}{}{d}$ satisfying this extension property, that is “maximal" in some sense (see \cite{Casanovas2011SimpleTA}, Proposition 12.14).
\par However, by going from dividing to forking, we lose a crucial property: not all sets are extension bases anymore (this notion will be formally defined soon). Therefore, the study of extension bases that we carry out in this paper is a way to understand when forking keeps the good properties that hold for dividing, when we can get the best of both worlds.
\end{remark}

\begin{remark}
\begin{itemize}
\item These definitions do not depend on the choice of the monster model $\mathcal{M}$. In other words, if $\mathcal{N}$ is a $\kappa$-saturated, strongly-$\kappa$-homogeneous structure, $A'B'C'\subset\mathcal{N}$, and $ABC\longmapsto A'B'C'$ is a partial elementary map with respect to $\mathcal{M}$ and $\mathcal{N}$, then we have $\indep{C}{A}{B}{f}\Longleftrightarrow\indep{C'}{A'}{B'}{f}$.
\item If $\notIndep{C}{A}{B}{f}$, then there exists $c_1...c_n\in C$ so that $\notIndep{c_1...c_n}{A}{B}{f}$ (we call that property \emph{left finite character}). 
\item If $X\subset Y$ are two definable sets, and $Y$ divides (resp. forks) over $A$, then so does $X$.
\end{itemize}
\end{remark}

\begin{proposition}\label{divisionPreimage}
If $f$ is an $A$-definable function, and $Y$ is a definable subset of the image of $f$, then $Y$ divides over $A$ if and only if $f^{-1}(Y)$ does.
\end{proposition}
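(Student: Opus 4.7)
The plan is to use the same witness for division in both directions, leveraging the fact that an $A$-definable function commutes with every $A$-automorphism.

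First, I would record the key commutation: if $f$ is $A$-definable and $\sigma \in \text{Aut}(\mathcal{M}/A)$, then $\sigma \circ f = f \circ \sigma$, so $\sigma(f^{-1}(Z)) = f^{-1}(\sigma(Z))$ for any definable $Z \subseteq \text{image}(f)$. Moreover, since $\sigma$ fixes $\text{image}(f)$ setwise (it is $A$-definable), if $Y \subseteq \text{image}(f)$ then $\sigma(Y) \subseteq \text{image}(f)$, and for any subset $Z \subseteq \text{image}(f)$ we have $f^{-1}(Z) = \emptyset$ iff $Z = \emptyset$.

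For the forward direction, suppose $N < \omega$ and $(\sigma_n)_{n<\omega}$ witness division of $Y$ over $A$. I claim the same $N$ and the same sequence witness division of $f^{-1}(Y)$. Indeed, for any $P \subseteq \omega$ with $|P| = N$,
\[
\bigcap_{n \in P} \sigma_n(f^{-1}(Y)) \;=\; \bigcap_{n \in P} f^{-1}(\sigma_n(Y)) \;=\; f^{-1}\!\left(\bigcap_{n \in P} \sigma_n(Y)\right) \;=\; f^{-1}(\emptyset) \;=\; \emptyset.
\]

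For the converse, suppose $N$ and $(\sigma_n)$ witness division of $f^{-1}(Y)$. By the same computation,
\[
f^{-1}\!\left(\bigcap_{n \in P} \sigma_n(Y)\right) \;=\; \bigcap_{n \in P} \sigma_n(f^{-1}(Y)) \;=\; \emptyset
\]
whenever $|P| = N$. Since each $\sigma_n(Y) \subseteq \text{image}(f)$, the intersection $\bigcap_{n \in P} \sigma_n(Y)$ also lies in $\text{image}(f)$, and having empty preimage under $f$ forces it to be empty itself. Hence $(N, (\sigma_n))$ witnesses division of $Y$ over $A$. There is no real obstacle here; the only thing to be careful about is using the hypothesis $Y \subseteq \text{image}(f)$ in the backward direction to rule out the degenerate case where the empty preimage is uninformative.
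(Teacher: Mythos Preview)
Your proof is correct and follows essentially the same approach as the paper: both use the commutation $\sigma_n(f^{-1}(Y))=f^{-1}(\sigma_n(Y))$ for $A$-automorphisms, compute $\bigcap_{n\in P}\sigma_n(f^{-1}(Y))=f^{-1}\bigl(\bigcap_{n\in P}\sigma_n(Y)\bigr)$, and invoke $Y\subseteq\mathrm{image}(f)$ to conclude that this preimage is empty iff the intersection is. The only cosmetic difference is that the paper packages both directions into a single equivalence, while you spell them out separately.
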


\begin{proof}
Let $(\sigma_n)_n\in Aut(\mathcal{M}/A)^\omega$. As $f$ is $A$-definable, we have the equality:
$$\sigma_n(f^{-1}(Y))=f^{-1}(\sigma_n(Y))$$
Now, if $P\subset\omega$, then:
$$\bigcap\limits_{n\in P}\sigma_n(f^{-1}(Y))=\bigcap\limits_{n\in P}f^{-1}(\sigma_n(Y))=f^{-1}\left(\bigcap\limits_{n\in P}\sigma_n(Y)\right)$$
As $Y$ and its $A$-conjugates are subsets of the image of $f$, such an intersection is empty if and only if $\bigcap\limits_{n\in P}\sigma_n(Y)=\emptyset$. As a result $(\sigma_n)_n$ is a witness for division of $Y$ over $A$ if and only it is also a witness for division of $f^{-1}(Y)$ over $A$, which concludes the proof.
\end{proof}

\begin{fact}\label{forkingEZ}
Let $A$, $B$, $C$ be small subsets of $\mathcal{M}$. Then:
\begin{enumerate}
\item $\indep{C}{A}{B}{f}\Longleftrightarrow \indep{C}{\acl(A)}{B}{f}$. (This is because every $A$-indiscernible sequence is $\acl(A)$-indiscernible.)
\item If $B$ is an $|AC|^+$-saturated, strongly-$|AC|^+$-homogeneous elementary substructure of $\mathcal{M}$ that contains $A$, then $\indep{C}{A}{B}{f}\Longleftrightarrow\indep{C}{A}{B}{d}$. Moreover, if $p$ is a complete type over $B$ in a finite number of variables, and the orbit of $p$ under $Aut(B/A)$ is smaller than $|A|^+$ (in particular, if this orbit is a point), then $p$ does not divide over $A$. (See for instance \cite{Tent2012ACI}, exercises 7.1.3 and 7.1.5.)
\item If $D$ is a small subset of $\mathcal{M}$, $\indep{C}{A}{B}{f}$, and $\indep{D}{AC}{B}{f}$, then $\indep{CD}{A}{B}{f}$ (we call that \emph{left transitivity}).
\end{enumerate}
\end{fact}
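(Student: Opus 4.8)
The plan is to recognize all three items as standard abstract properties of nonforking and to prove them by the usual arguments, citing \cite{Tent2012ACI} and \cite{Casanovas2011SimpleTA}; nothing here is specific to valued fields. For item (1), the implication $\indep{C}{A}{B}{f}\Rightarrow\indep{C}{\acl(A)}{B}{f}$ is immediate: since $A\subset\acl(A)$, any $\acl(A)$-indiscernible sequence is $A$-indiscernible, so a formula dividing over $\acl(A)$ divides over $A$, hence forking over $\acl(A)$ implies forking over $A$ (which is the contrapositive). For the converse, I would take a formula $\phi(x,b)$ dividing over $A$ --- witnessed by an $A$-indiscernible sequence $(b_j)_{j<\omega}$ with $b_0=b$ and $\{\phi(x,b_j)\}_j$ $k$-inconsistent --- stretch it to a sufficiently long $A$-indiscernible sequence, and then extract (Erd\H{o}s--Rado) a subsequence with the same EM-type over $A$ that is moreover $\acl(A)$-indiscernible; after an automorphism fixing $A$ pointwise its first term is again $b$, so $\phi(x,b)$ divides over $\acl(A)$. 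Applying this to each disjunct shows forking over $A$ implies forking over $\acl(A)$, i.e.\ $\indep{C}{\acl(A)}{B}{f}\Rightarrow\indep{C}{A}{B}{f}$.

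For item (2), dividing implies forking by definition, so only ``forking $\Rightarrow$ dividing over such a $B$'' needs an argument. I would use compactness to get $\chi(x,m)\in\tp(C/AB)$ with $m\in B$ and finitely many formulas $\phi_i(x,b_i)$ dividing over $A$ such that $\chi(x,m)\vdash\bigvee_i\phi_i(x,b_i)$; since whether $\phi_i(x,b_i)$ divides over $A$ depends only on $\tp(b_i/A)$, and $B$ is $|AC|^+$-saturated, I realize $\tp(b_0\dots b_{n-1}/Am)$ inside $B$ by a tuple $b_0'\dots b_{n-1}'$, obtaining $\chi(x,m)\vdash\bigvee_i\phi_i(x,b_i')$ with all parameters now in $B$ and each $\phi_i(x,b_i')$ still dividing over $A$; as $C\models\chi(x,m)$ it satisfies some $\phi_{i_0}(x,b_{i_0}')$, and then $\chi(x,m)\wedge\phi_{i_0}(x,b_{i_0}')$ is a formula over $B$ in $\tp(C/B)$ contained in the dividing set $\phi_{i_0}(\mathcal{M},b_{i_0}')$, hence dividing over $A$. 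The supplementary statement about orbits is the usual pigeonhole: if the $\mathrm{Aut}(\mathcal{M}/A)$-orbit of $p=\tp(C/B)$ has size $<|A|^+$, then along any $A$-indiscernible sequence of conjugate copies of an enumeration of $B$ of length $|A|^+$ the conjugate copies of $p$ must repeat, hence by indiscernibility are all equal to $p$, so their union is the consistent type $p$ and $p$ cannot divide over $A$; strong $|AC|^+$-homogeneity is what lets us move between a small orbit and a small number of realizations of the relevant type.

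Finally, item (3) is ordinary left transitivity of nonforking, valid over an arbitrary base; I would simply invoke it from \cite{Tent2012ACI}. In outline: a witness that $\tp(CD/AB)$ forks over $A$ --- a disjunction of formulas $\phi_i(x,y;b_i)$ dividing over $A$ implied by $\tp(CD/AB)$ --- is pushed down, after specializing the $C$-variable to $C$ and rearranging the witnessing sequences to be $AC$-indiscernible, to a witness that either $\tp(C/AB)$ forks over $A$ or $\tp(D/ACB)$ forks over $AC$, contradicting the hypotheses. I expect the only real difficulty in the whole Fact to be the bookkeeping: arranging the several indiscernible sequences to be simultaneously indiscernible over the enlarged bases, and keeping track of which parameters lie in $B$ and which in the monster model; there is no conceptual obstacle.
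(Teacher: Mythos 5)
The paper offers no proof of this Fact (it cites \cite{Tent2012ACI} and gives only a one-line parenthetical for item (1)), so there is no in-paper argument to compare against; measured against the standard proofs, your sketch has two gaps. In item (1) you have the easy and hard directions reversed: the direction you call immediate, $\indep{C}{A}{B}{f}\Rightarrow\indep{C}{\acl(A)}{B}{f}$, does start from ``dividing over $\acl(A)$ implies dividing over $A$,'' but this only shows that a forking formula $\phi(x,e,m)\in\tp(C/\acl(A)B)$ (with $e\in\acl(A)$, $m\in B$) forks over $A$ --- it does not exhibit a formula of $\tp(C/AB)$ forking over $A$, because $e$ need not lie in $AB$. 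One still has to eliminate the $\acl(A)$-parameter, for instance by passing to $\exists y\,(\theta(y)\wedge\phi(x,y,m))$ where $\theta$ isolates $\tp(e/A)$: this formula is over $AB$, lies in $\tp(C/AB)$, and forks over $A$ since it is covered by the finitely many $A$-conjugates of $\phi(x,e,m)$, each of which forks over $A$ by $A$-invariance of forking. (This is the same parameter-elimination move the paper performs explicitly in Propositions~\ref{leftAcl} and~\ref{rightAcl}, so it is not cosmetic.) Your ``converse'' is in fact the genuinely easy direction, since $\tp(C/AB)\subset\tp(C/\acl(A)B)$, and the Erd\H{o}s--Rado extraction there is a perfectly good way to prove the indiscernibility lemma the paper merely cites.

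In item (3) the outline would not go through as written. Specializing $x=C$ in $\chi(x,y;m)\vdash\bigvee_i\phi_i(x,y;b_i)$ does not make $\phi_i(C,y;b_i)$ divide over $AC$: the $A$-indiscernible sequence witnessing dividing of $\phi_i(x,y;b_i)$ over $A$ need not be $AC$-indiscernible, and $\phi_i(C,y;b_i)$ could even be realised in $\acl(AC)$, hence not divide at all. Your phrase ``rearranging the witnessing sequences to be $AC$-indiscernible'' does point at the right mechanism, but the rearrangement is exactly where the hypothesis $\indep{C}{A}{B}{f}$ is used --- via the characterization of non-dividing in terms of extending indiscernible sequences (TZ~7.1.5), one replaces the witnessing sequence by an $Ab_i$-conjugate over which $C$ is indiscernible --- and to apply that characterization from a non-forking hypothesis one must first pass to a sufficiently saturated model over $B$ and reduce forking to dividing there. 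The ``either/or'' dichotomy is therefore not a free byproduct of the disjunction; recognizing where each hypothesis enters is the whole content of left transitivity. Item (2) looks correct.
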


Let us conclude this subsection by an example set in the class of ordered Abelian groups, which will be useful later.

\begin{remark}\label{coupure}
Let $G$ be a non-trivial ordered Abelian group, and $A, B\subset G$. Suppose the following conditions hold:
\begin{itemize}
\item $G$ is $|AB|^+$-saturated.
\item $\forall a\in A\ \forall b\in B\ a<b$.
\item $G$ is dense or $A$ does not have a largest element.
\end{itemize}
Then, one can show by an easy compactness argument that the type-definable set $X=\bigcap\limits_{a\in A,b\in B}\left]a, b\right[$ is non-empty in $G$.
\par Moreover, suppose $G\equiv\mathbb{Z}$, $A'$, $B'$ are small subsets of $G$ so that $A\subset A'\subset B'\supset B$, and $X$ does not have any point in $\dcl(B')$. Then one can prove that $\indep{c}{A'}{B'}{f}$. This can be done by looking at the unary types in the ordered group $\mathbb{Z}$, which are easy to understand. In fact, more generally, the author has soon to be available preprint about forking in a larger class of ordered Abelian groups, from which this  result follows immediatly.
\end{remark}

\subsection{Further results}
The facts in this subsection are already known, but we do not know of good references where they are proved explicitly. Propositions \ref{leftAcl} and \ref{rightAcl} are proved in (\cite{KapCherNTP2}, Lemma 3.21.(2)) with hypothesis that are not necessary.

\begin{lemma}\label{fibresFinies}
Let $A$ be a small subset of $\mathcal{M}$, $Z$, $Z'$ $A$-definable sets, $R\subset Z\times Z'$ an $A$-definable set, and $m<\omega$. Suppose that, for all $x\in Z$, the set $R_x=\left\lbrace y\in Z'|R(x, y) \right\rbrace$ has exactly $m$ elements.
\par Then, if $Y\subset Z'$ is an $\mathcal{M}$-definable set that divides over $A$, then the definable set $X=\left\lbrace x\in Z|\exists y\in Y\ R(x, y)\right\rbrace$ divides over $A$.
\end{lemma}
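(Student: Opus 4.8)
The plan is to take a witness for division of $Y$ over $A$ and transfer it to a witness for division of $X$ over $A$ by controlling how the fibers $R_x$ interact with the translates of $Y$. First I would fix $N < \omega$ and $(\sigma_n)_{n<\omega} \in \mathrm{Aut}(\mathcal{M}/A)^\omega$ a witness for division of $Y$ over $A$, so that $\sigma_0 = \mathrm{id}$ and $\bigcap_{n \in P} \sigma_n(Y) = \emptyset$ whenever $|P| = N$. The obvious first attempt is to check whether $(\sigma_n)_n$ is already a witness for division of $X$: since $R$ is $A$-definable, $\sigma_n(X) = \{x \in Z \mid \exists y \in \sigma_n(Y)\ R(x,y)\}$, so a point $x$ lies in $\bigcap_{n \in P} \sigma_n(X)$ exactly when for each $n \in P$ the fiber $R_x$ meets $\sigma_n(Y)$. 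Because $|R_x| = m$, if $|P|$ is large enough (pigeonhole: $|P| > m(N-1)$), some single element $y \in R_x$ would have to lie in at least $N$ of the sets $\sigma_n(Y)$, contradicting the division witness for $Y$. So I would instead work with $N' = m(N-1) + 1$ in place of $N$, and argue that $\bigcap_{n \in P}\sigma_n(X) = \emptyset$ whenever $|P| = N'$.

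The key step is the pigeonhole argument made precise. Suppose toward a contradiction that $x \in \bigcap_{n \in P}\sigma_n(X)$ with $|P| = N'$. For each $n \in P$ pick $y_n \in R_x \cap \sigma_n(Y)$; this uses that $x \in \sigma_n(X)$ and nonemptiness of the witnessed intersection. Since $R_x$ has exactly $m$ elements, the map $n \mapsto y_n$ from the $N'$-element set $P$ into $R_x$ has a fiber of size at least $\lceil N'/m \rceil = N$; call $P_0 \subseteq P$ a set of $N$ indices with a common value $y \in R_x$. Then $y \in \bigcap_{n \in P_0}\sigma_n(Y)$ with $|P_0| = N$, contradicting that $(\sigma_n)_n$ witnesses division of $Y$ over $A$ with parameter $N$. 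Hence no such $x$ exists, and $(\sigma_n)_n$ together with $N'$ witnesses division of $X$ over $A$.

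I do not expect a serious obstacle here; the only points requiring a little care are: (i) confirming the identity $\sigma_n(X) = \{x \mid \exists y \in \sigma_n(Y)\ R(x,y)\}$, which is immediate from $\sigma_n$ fixing $A$ pointwise and $R$, $Z$, $Z'$ being $A$-definable (so $\sigma_n(R) = R$, $\sigma_n(Z) = Z$); and (ii) the bookkeeping in the pigeonhole so that the new bound $N' = m(N-1)+1$ indeed forces a common value attained $N$ times. If one prefers a more structural phrasing, one can note that $X$ is the image of $R \cap (Z \times Y)$ under the ($A$-definable, finite-to-one on the relevant set) projection to $Z$ and invoke a general "finite-to-one images preserve dividing" principle; but the direct witness-transfer argument above is self-contained and avoids needing the converse direction. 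The hardest part, such as it is, is simply choosing the right modified bound $N'$ rather than hoping the original $N$ works.
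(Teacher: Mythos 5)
Your proof is correct and uses essentially the same pigeonhole argument as the paper, phrased directly (transferring a witness for division of $Y$ into one for $X$ with bound $N' = m(N-1)+1$) rather than by contraposition (the paper assumes $X$ does not divide, takes a $P$ of size $Nm$ witnessing this, and deduces $Y$ does not divide). The two presentations are equivalent; yours has a marginally sharper bound but no substantive difference.
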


\begin{proof}
By contraposition, suppose $X$ does not divide over $A$. Let $N<\omega$, and $(\sigma_n)_{n<\omega}\in Aut(\mathcal{M}/A)^\omega$ so that $\sigma_0=id$. As $X$ does not divide over $A$, there must exist $P$ a subset of $\omega$ of size $Nm$ so that $\bigcap\limits_{n\in P}\sigma_n(X)\neq\emptyset$. Let $\alpha$ be in that intersection. As $\alpha\in Z$, let $\beta_1...\beta_m$ be the elements of $R_\alpha$. For each $n\in P$, let $1\leqslant f(n)\leqslant m$ so that $\beta_{f(n)}\in \sigma_n(Y)$. By the pigeonhole principle, there exists $1\leqslant q\leqslant m$ so that $q$ has at least $N$ antecedents from $f$. Let $Q=\left\lbrace n\in P|f(n)=q\right\rbrace$. Then $Q$ is a subset of $\omega$ of size $\geqslant N$ for which $\beta_q\in\bigcap\limits_{n\in Q}\sigma_n(Y)$, hence $N, (\sigma_n)_n$ is not a witness for division of $Y$ over $A$. This holds for all $N, (\sigma_n)_n$, so $Y$ does not divide over $A$, and we can conclude.
\end{proof}

\begin{proposition}\label{leftAcl}
Let $A$, $B$, $C$, $D$ be small subsets of $\mathcal{M}$ so that $\acl(AC)\subset\acl(AD)$. If $\indep{D}{A}{B}{f}$, then $\indep{C}{A}{B}{f}$.
\end{proposition}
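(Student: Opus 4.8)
The plan is to argue by contraposition: assuming $\notIndep{C}{A}{B}{f}$, I will exhibit forking of $\tp(D/AB)$ over $A$. By left finite character I may fix a finite subtuple $c$ of $C$ with $\notIndep{c}{A}{B}{f}$, so there is a formula $\phi(x)$ with parameters in $AB$, lying in $\tp(c/AB)$, that forks over $A$; by definition of forking there are formulas $\psi_1(x),\dots,\psi_n(x)$ over $\mathcal{M}$, each dividing over $A$, with $\phi(\mathcal{M})\subseteq\bigcup_i\psi_i(\mathcal{M})$. Write $X_i=\psi_i(\mathcal{M})$, a subset of the sort $Z'$ of the variable $x$. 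The hypothesis $\acl(AC)\subseteq\acl(AD)$ gives $c\in\acl(AD)$, so there is a finite subtuple $d$ of $D$ and a formula $S(x,y)$ with parameters in $A$ such that $\models S(c,d)$ and $\{x:S(x,d)\}$ is finite, of some size $m\geqslant 1$. I then set $Z=\{y:|\{x:S(x,y)\}|=m\}$, an $A$-definable set containing $d$, and let $R\subseteq Z\times Z'$ be the $A$-definable relation $R(y,x)\iff y\in Z\wedge S(x,y)$; every fibre $R_y=\{x:R(y,x)\}$ with $y\in Z$ then has exactly $m$ elements.

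The key step is to invoke Lemma \ref{fibresFinies} with this $R$, thinking of $Z$ as the ``$d$-side'' and $Z'$ as the ``$c$-side'': since each $X_i\subseteq Z'$ divides over $A$, so does $W_i:=\{y\in Z:\exists x\in X_i\ R(y,x)\}$. I then consider the formula $\psi(y)$, with parameters in $AB$, defined by $y\in Z\wedge\exists x\,(S(x,y)\wedge\phi(x))$. It holds of $d$, witnessed by $x=c$ (using $\models S(c,d)$ and $\models\phi(c)$), so $\psi\in\tp(d/AB)$; and $\psi(\mathcal{M})\subseteq\bigcup_i W_i$, because any $x$ witnessing $\psi(e)$ lies in $\phi(\mathcal{M})\subseteq\bigcup_i X_i$, whence $x\in X_i$ and $e\in W_i$ for some $i$. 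Thus $\psi$ forks over $A$, so $\tp(d/AB)$ forks over $A$, i.e. $\notIndep{d}{A}{B}{f}$; and since $d$ is a subtuple of $D$, the same formula $\psi$ belongs to $\tp(D/AB)$ and witnesses $\notIndep{D}{A}{B}{f}$.

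The point requiring care is that Lemma \ref{fibresFinies} transfers \emph{dividing} only, so it must be applied not to $\phi(\mathcal{M})$ itself but separately to each set $X_i$ of the dividing cover, the forking cover $W_1,\dots,W_n$ on the $d$-side being reassembled afterwards. The rest is bookkeeping: keeping the parameters of $\psi$ in $AB$ and those of $Z$ and $R$ in $A$, and arranging the constant-fibre-size hypothesis of Lemma \ref{fibresFinies} (handled by cutting down to $Z$). I do not expect a deeper obstacle, because the direction in which Lemma \ref{fibresFinies} moves division — from a set on the target side back along a finite-to-one $A$-definable correspondence — is exactly the one needed here, the relation ``$c\in\acl(Ad)$'' being precisely the statement that the correspondence which sends $d$ to its finitely many matching values of $c$ has finite fibres.
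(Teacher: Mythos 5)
Your proof is correct and uses essentially the same ideas as the paper's: both encode $c\in\acl(AD)$ by an $A$-definable relation with constant finite fibres (cutting down to the set of parameters with exactly $m$ solutions), and both appeal to Lemma \ref{fibresFinies} to transfer dividing along that relation. The only cosmetic difference is that you argue by contraposition (from a forking formula in $\tp(c/AB)$, push forward to one in $\tp(d/AB)$), whereas the paper argues directly (pull back an arbitrary covering of a set in $\tp(c/AB)$ and use that $\tp(D/AB)$ does not fork); the two are logically the same argument.
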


\begin{proof}
Let $c$ be a finite tuple from $C$, $Y$ an $AB$-definable set that contains $c$, $p<\omega$, and $(Y_i)_{0\leqslant i\leqslant p}$ some $\mathcal{M}$-definable sets so that $Y\subset\bigcup\limits_i Y_i$. Let $\phi(x, y)$ be a formula with parameters in $A$ so that $\phi(D, y)$ is the least finite (say, of size $m<\omega$) $AD$-definable set that contains $c$. Let $\psi(x)$ be the $A$-definable formula stating that there exists exactly $m$ distinct points $y_1..y_m$ for which $\phi(x, y_i)$ holds for all $i$. Let $X$ be the $AB$-definable set containing (some enumeration of) $D$ defined by the following formula with free variable $x$:
$$\psi(x)\wedge\left(\exists y\ \left(\phi(x, y)\wedge y\in Y\right)\right)$$
Define similarly $X_i$ from $Y_i$. By hypothesis, $X$ does not fork over $A$. We clearly have $X\subset\bigcup\limits_i X_i$, so there exists $j$ so that $X_j$ does not divide over $A$. By \ref{fibresFinies}, $Y_j$ does not divide over $A$. This holds for every choice of a finite family $(Y_i)_i$, so $Y$ does not fork over $A$. This holds for every $Y$, so $\indep{c}{A}{B}{f}$. This holds for every finite tuple $c$ of $C$, so by left finite character we have $\indep{C}{A}{B}{f}$.
\end{proof}

\begin{proposition}\label{rightAcl}
If $A$, $B$, $C$ are small subsets of $\mathcal{M}$, and $\notIndep{C}{A}{\acl(AB)}{f}$, then $\notIndep{C}{A}{B}{f}$.
\end{proposition}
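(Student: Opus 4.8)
The plan is to prove the contrapositive: assuming $\indep{C}{A}{B}{f}$, I show $\indep{C}{A}{\acl(AB)}{f}$. The idea is the usual one — extend the non-forking type $tp(C/AB)$ to a non-forking type over $\acl(AB)$ via the extension property of forking (Remark \ref{bonnesExtensions}), and then transport it back by an automorphism fixing $AB$ pointwise, which necessarily stabilizes $\acl(AB)$ setwise, using that forking is invariant under automorphisms of $\mathcal{M}$.

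Concretely, here are the steps I would carry out. Since $A$, $B$ (and $C$) are small, so is $\acl(AB)$, hence $A\,\acl(AB)$ is small and the extension property and saturation apply to it. By hypothesis $tp(C/AB)$ does not fork over $A$; viewing it as a partial type over $A\,\acl(AB)$, Remark \ref{bonnesExtensions} gives a complete type $q$ over $A\,\acl(AB)$ extending $tp(C/AB)$ that does not fork over $A$. By $\kappa$-saturation, pick $C'$ in $\mathcal{M}$ realizing $q$; then $C'\equiv_{AB}C$, so by strong $\kappa$-homogeneity there is $\sigma\in\mathrm{Aut}(\mathcal{M}/AB)$ with $\sigma(C')=C$. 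As $\sigma$ fixes $AB$ pointwise, it fixes $A$ pointwise and maps $\acl(AB)$ onto itself. Since dividing — and hence forking — is preserved under automorphisms of the monster (if $X$ divides over $E$ with witness $N,(\tau_n)_n$, then $\sigma(X)$ divides over $\sigma(E)$ with witness $N,(\sigma\tau_n\sigma^{-1})_n$), applying $\sigma$ to $q$ shows that $tp(\sigma(C')/\sigma(A)\,\sigma(\acl(AB)))=tp(C/A\,\acl(AB))$ does not fork over $\sigma(A)=A$. This is exactly $\indep{C}{A}{\acl(AB)}{f}$, which completes the proof of the contrapositive.

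I expect no genuine obstacle here. The only points deserving a little care are that $\acl(AB)$ is small (so that Remark \ref{bonnesExtensions} and $\kappa$-saturation can be invoked) and the routine verification that forking is invariant under automorphisms of $\mathcal{M}$ — this is essentially part of what makes $\indep{}{}{}{f}$ independent of the choice of monster model. If one prefers to avoid types in infinitely many variables, one may first apply left finite character to reduce to the case where $C$ is a finite tuple, but this reduction is not needed for the argument above.
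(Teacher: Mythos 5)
Your proof is correct, but it takes a genuinely different route from the paper. The paper argues directly: it takes a forking witness $X \subset \bigcup_i X_i$ over $\acl(AB)$, observes that $X$ has finitely many $AB$-conjugates $\sigma_1(X),\dots,\sigma_N(X)$, and shows that the $AB$-definable union $Y = \bigcup_j \sigma_j(X)$ is covered by $\bigcup_{i,j}\sigma_j(X_i)$, each piece of which still divides over $A$ (since each $\sigma_j$ fixes $A$ pointwise). Your argument instead goes through the contrapositive, invoking the extension property of forking (Remark \ref{bonnesExtensions}) to produce a non-forking completion $q$ over $\acl(AB)$, then transporting the realization $C'$ of $q$ back to $C$ by an automorphism over $AB$ (which stabilizes $\acl(AB)$ setwise and fixes $A$ pointwise), using automorphism-invariance of forking. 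Both routes are sound; the paper's is more constructive and stays entirely at the level of dividing witnesses, which matches the flavor of the adjacent Lemma \ref{fibresFinies} and Proposition \ref{leftAcl}, while your version leans on the more conceptual extension property and reads a bit more slickly. The only detail worth making explicit in your write-up is that $\sigma(\acl(AB)) = \acl(AB)$ \emph{as a set} (not pointwise), which is exactly enough for the conclusion since types and forking are set-level notions over the base — you do say this, so the argument is complete.
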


Note that the other direction is an easy consequence of the definition of forking.

\begin{proof}
Let $X$ be an $\acl(AB)$-definable set containing $C$ that forks over $A$. Let $X_1...X_n$ witness that $X$ forks over $A$, ie $X\subset\bigcup\limits_i X_i$ and each $X_i$ divides over $A$. Now, $X$ has finitely many $AB$-conjugates, so there exist $N<\omega$ and $\sigma_1...\sigma_N\in Aut(\mathcal{M}/AB)$ so that $\left\lbrace \sigma_j(X)|1\leqslant j\leqslant N\right\rbrace$ is the orbit of $X$ under the action of $Aut(\mathcal{M}/AB)$. The reunion $Y=\bigcup\limits_j \sigma_j(X)$ is an $AB$-definable set containing $C$. As each $\sigma_j$ pointwise-fixes $A$, each $\sigma_j(X_i)$ divides over $A$ (apply $\sigma_j$ to a witness for division). As $Y\subset\bigcup\limits_{ij}\sigma_j(X_i)$, we have $\notIndep{C}{A}{B}{f}$, which concludes the proof.
\end{proof}

\begin{lemma}\label{divisionBase}
Let $A$, $C$ be small subsets of $\mathcal{M}$, and $X$ a definable subset of $\mathcal{M}$ in a small number of variables that divides over $A$. Suppose $(2^{|AC|})^+<\kappa$ (an assumption that can always be made as $\kappa$ is arbitrarily large, and we can replace $\mathcal{M}$ by a sufficiently large elementary extension). Then there exists $C'\equiv_A C$, $C'\subset \mathcal{M}$, so that $X$ divides over $AC'$.
\end{lemma}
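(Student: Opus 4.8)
The plan is to build $C'$ as a member of a long indiscernible-type sequence witnessing division, and use the pigeonhole-type argument that underlies the usual ``dividing with respect to an enlarged base'' manipulations. First I would unpack the hypothesis: since $X$ divides over $A$, fix $N<\omega$ and a sequence $(\sigma_n)_{n<\omega}$ in $Aut(\mathcal{M}/A)$ with $\sigma_0=\mathrm{id}$ such that every $N$-element subfamily of $(\sigma_n(X))_n$ has empty intersection. Equivalently, letting $a$ be an enumeration of the parameters of $X$ and $b_n=\sigma_n(a)$, the sequence $(b_n)_n$ is $A$-indiscernible (after passing to an indiscernible subsequence extracted by Ramsey, which does not change the conclusion) and the partial type $\{x\in X_{b_n}\}_{n<\omega}$ is $N$-inconsistent, where $X_{b}$ denotes the copy of $X$ with parameters $b$. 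So $\mathrm{tp}(a/A)$ contains a formula $\varphi(x,b_0)$ which divides over $A$ as witnessed by $(b_n)_n$.

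Next I would stretch this sequence. Using saturation of $\mathcal{M}$ (here is where $(2^{|AC|})^+<\kappa$ is used, or even just a large enough cardinal), extend $(b_n)_{n<\omega}$ to an $A$-indiscernible sequence $(b_n)_{n<\lambda}$ with $\lambda$ large — it suffices to take $\lambda=(2^{|AC|})^+$ or any cardinal of that order — still witnessing $N$-inconsistency of $\{\varphi(x,b_n)\}_n$, hence still witnessing that $\varphi(x,b_0)$ divides over $A$. Now I would run an Erdős–Rado / pigeonhole argument on the colouring $n\mapsto \mathrm{tp}(C/Ab_n)$: there are at most $2^{|AC|}$ possible such types over a model of size controlled by $|AC|$ (more precisely, restrict attention to the finitely many parameters actually occurring and to a small fragment, so that the number of relevant types is $\le 2^{|AC|}<\lambda$), so by pigeonhole there is an infinite — indeed, after re-extracting, an $A(C)$-indiscernible — subsequence along which $\mathrm{tp}(C/Ab_n)$ is constant, equal to some fixed $q$. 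Replacing $(b_n)$ by this subsequence, we still have an $A$-indiscernible sequence witnessing that $\varphi(x,b_0)$ divides over $A$, and now $b_n b_0 \equiv_{A} b_n' b_{0}'$ for pairs, with $C$ having the same type over each $Ab_n$.

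Finally I would produce $C'$. Since all the $b_n$ realize the same type over $AC$ (by the constancy of $q=\mathrm{tp}(C/Ab_n)$, read the other way: $b_n\equiv_{AC} b_m$), there is $\sigma\in Aut(\mathcal{M}/AC)$ with $\sigma(b_1)=b_0$; more usefully, apply an automorphism fixing $A$ pointwise sending $b_0$ to $b_1$ and see where it sends $C$ — call the image $C'$, so $C'\equiv_A C$ and $b_0\equiv_{AC'} b_1$, and by indiscernibility $b_0\equiv_{AC'} b_n$ for all $n\ge 1$. Then $(b_n)_{n\ge 1}$ is an $AC'$-indiscernible sequence in $\mathrm{tp}(b_0/AC')$ along which $\{\varphi(x,b_n)\}$ is $N$-inconsistent, so $\varphi(x,b_0)$, hence $X=X_{b_0}$ (up to the $A$-definable coding of its parameters), divides over $AC'$, as required.

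The main obstacle I expect is the bookkeeping in the pigeonhole step: to make ``$\mathrm{tp}(C/Ab_n)$ takes few values'' precise one must be careful that $C$ may be infinite, so one works type-by-type with finite subtuples of $C$ and uses left finite character together with a cardinality bound of the form $2^{|AC|}$ on the relevant space of types — this is exactly why the hypothesis $(2^{|AC|})^+<\kappa$ is imposed — and then one must re-extract an indiscernible subsequence without destroying the $N$-inconsistency, which is routine by Ramsey but needs to be stated in the right order. Everything else is a standard manipulation of indiscernible sequences and automorphisms.
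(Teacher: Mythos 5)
Your overall strategy is the one the paper uses: stretch the witnessing sequence of conjugates to length $(2^{|AC|})^+$, pigeonhole on $n\mapsto\tp(b_n/AC)$ (equivalently $n\mapsto\tp(C/Ab_n)$), and then move $C$ by an automorphism over $A$ so that the resulting $C'$ sees the constant-type subsequence as a witness for division of $X$ itself. The extraction of $A$-indiscernibility is unnecessary (the paper works directly with the automorphisms and never invokes Ramsey or Erdős--Rado), but it is harmless.

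The last step, as you have written it, goes in the wrong direction, and the claim ``$b_0\equiv_{AC'}b_n$ by indiscernibility'' does not follow as stated. After the pigeonhole you have $b_n\equiv_{AC}b_m$ for all $n,m\ge 1$ in the subsequence, but the original tuple $b_0$ parametrizing $X$ need not lie in that subsequence. You take $\tau\in Aut(\mathcal{M}/A)$ with $\tau(b_0)=b_1$ and set $C'=\tau(C)$. Applying $\tau^{-1}$ to the desired statement $b_0\equiv_{AC'}b_1$ transforms it into $\tau^{-1}(b_0)\equiv_{AC}b_0$, and $\tau^{-1}(b_0)$ is an undetermined $A$-conjugate of $b_0$ about which you know nothing; likewise $b_0\equiv_{AC'}b_n$ for $n\ge 2$ is not controlled by $A$-indiscernibility of $(b_n)$, since that sequence is only $A$-indiscernible, not $AC'$-indiscernible. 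The correct choice is $C'=\tau^{-1}(C)$: then applying $\tau^{-1}$ to $b_1\equiv_{AC}b_n$ gives $b_0\equiv_{AC'}\tau^{-1}(b_n)$ for all $n\ge 1$, and $(\tau^{-1}(b_n))_{n\ge 1}$ (not $(b_n)$ itself) is the sequence of $AC'$-conjugates of $b_0$ along which $\{\varphi(x,\cdot)\}$ is $N$-inconsistent, so $X=X_{b_0}$ divides over $AC'$. This is exactly the paper's move, where $C'=\sigma_{i_0}^{-1}(C)$ and the new automorphisms $\tau_n$ are chosen over $AC'$ so that $\tau_n(b)=\sigma_{i_0}^{-1}\sigma_{i_n}(b)$. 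The fix is local and does not change your approach, but as written the final paragraph would not compile into a proof.
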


\begin{proof}
Let $b\in\mathcal{M}$ so that $X$ is $b$-definable. By a standard compactness argument, from a witness for division of $X$ over $A$, we can find a sequence $(\sigma_i)_{i<(2^{|AC|})^+}\in Aut(\mathcal{M}/A)^{(2^{|AC|})^+}$, $N<\omega$, so that, for all $P\subset(2^{|AC|})^+$, if $|P|=N$, then $\bigcap\limits_{i\in P}\sigma_i(X)=\emptyset$. Let us look at the map $f\ :\ i\longmapsto tp(\sigma_i(b)/AC)$. The image of $f$ is smaller than $2^{|AC|}$, so we can apply the pigeonhole principle to find $i_0<i_1<i_2...$ so that, for all $n<\omega$, $f(i_{n+1})=f(i_n)$. Let $C'=\sigma_{i_0}^{-1}(C)\equiv_A C$. For each $n<\omega$, $\sigma_{i_0}^{-1}\circ\sigma_{i_n}(b)$ is a $\sigma_{i_0}^{-1}(AC)=AC'$-conjugate of $b$, so we can use strong homogeneity to find $\tau_n\in Aut(\mathcal{M}/AC')$ ($\tau_0=id$) so that $\tau_n(b)=\sigma_{i_o}^{-1}\circ\sigma_{i_n}(b)$, ie $\tau_n(X)=\sigma_{i_o}^{-1}\circ\sigma_{i_n}(X)$. Then $N$, $(\tau_n)_{n<\omega}$ is a witness for division of $X$ over $AC'$.
\end{proof}

\subsection{Extension bases}

\begin{definition}
Let $A$ be a small subset of $\mathcal{M}$. Then $A$ is an \textit{extension base} if, for every small subset $C$ of $\mathcal{M}$, we have $\indep{C}{A}{A}{f}$.
\end{definition}

\begin{remark}
Note that models are extension bases: any type over a model admits a global coheir, which is in particular a global non-forking extension. In any elementary extension of the ordered group $\mathbb{Z}$, any definably-closed parameter set is a model, therefore any set in the theory of $(\mathbb{Z}, +, <)$ is an extension base.
\end{remark}

\begin{lemma}\label{transBExt}
Let $\mathfrak{A}$ be a large subset of $\mathcal{M}$, and $\mathfrak{C}$ be a class of small subsets of $\mathcal{M}$. Suppose that, for every singleton $c\in\mathfrak{A}$, for all $B\in\mathfrak{C}$, we have $Bc\in\mathfrak{C}$. Then the following conditions are equivalent:

\begin{itemize}
\item For every $B\in \mathfrak{C}$, for every  small subset $C$ of $\mathfrak{A}$, we have $\indep{C}{B}{B}{f}$ 
\item For every $B\in \mathfrak{C}$, for every singleton $c\in\mathfrak{A}$, we have $\indep{c}{B}{B}{f}$.
\end{itemize}
\end{lemma}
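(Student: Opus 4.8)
The nontrivial implication is that the singleton version implies the version for arbitrary small $C$; the converse is immediate by left finite character. So assume the second condition, fix $B\in\mathfrak{C}$, and let $C$ be a small subset of $\mathfrak{A}$. By left finite character (stated in the excerpt), it suffices to show $\indep{c_1\dots c_n}{B}{B}{f}$ for every finite tuple $c_1,\dots,c_n$ of elements of $C$ (hence of $\mathfrak{A}$). I would then induct on $n$.

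\begin{proof}
The second condition is an instance of the first, so assume the second holds; we prove the first. Fix $B\in\mathfrak{C}$. By left finite character, it suffices to show $\indep{c_1\dots c_n}{B}{B}{f}$ for every $n<\omega$ and every tuple $c_1,\dots,c_n$ of singletons from $\mathfrak{A}$. We proceed by induction on $n$. The case $n=0$ is trivial and the case $n=1$ is exactly the second condition. Suppose the claim holds for tuples of length $n$, and let $c_1,\dots,c_{n+1}\in\mathfrak{A}$. By the induction hypothesis, $\indep{c_1\dots c_n}{B}{B}{f}$. Since $c_1,\dots,c_n$ are singletons from $\mathfrak{A}$ and $B\in\mathfrak{C}$, iterating the hypothesis on $\mathfrak{C}$ gives $Bc_1\dots c_n\in\mathfrak{C}$; applying the second condition to this parameter set and the singleton $c_{n+1}\in\mathfrak{A}$ yields
$$\indep{c_{n+1}}{Bc_1\dots c_n}{Bc_1\dots c_n}{f}.$$
In particular $\indep{c_{n+1}}{Bc_1\dots c_n}{B}{f}$, since forking over a larger base (here with $B\subseteq Bc_1\dots c_n$ on the right) is weaker: $tp(c_{n+1}/Bc_1\dots c_n)$ does not fork over $Bc_1\dots c_n$, and $B c_1 \dots c_n$ is precisely the set of parameters on the right, so this is immediate. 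Now apply left transitivity (Fact, part (3)) with $A=B$, the side parameter set $Bc_1\dots c_n$, $C=c_1\dots c_n$, and $D=c_{n+1}$: from $\indep{c_1\dots c_n}{B}{Bc_1\dots c_n}{f}$ — which holds trivially as $c_1\dots c_n\subseteq Bc_1\dots c_n$, hence $tp(c_1\dots c_n/Bc_1\dots c_n)$ is realized and does not fork over anything — together with $\indep{c_{n+1}}{Bc_1\dots c_n}{Bc_1\dots c_n}{f}$, we obtain $\indep{c_1\dots c_{n+1}}{B}{Bc_1\dots c_n}{f}$, and in particular $\indep{c_1\dots c_{n+1}}{B}{B}{f}$. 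This completes the induction and the proof.
\end{proof}

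The main point to get right is the bookkeeping of the base and the side set in the application of left transitivity: one wants the left transitivity statement "$\indep{C}{A}{E}{f}$ and $\indep{D}{AC}{E}{f}$ imply $\indep{CD}{A}{E}{f}$" applied with $E = Bc_1\dots c_n$, where the first hypothesis is vacuous and the second comes from the single-singleton assumption after absorbing $c_1\dots c_n$ into the base. I expect no serious obstacle; the only subtlety is making sure $\mathfrak{C}$ is closed under adjoining the $c_i$ one at a time, which is exactly the hypothesis imposed on $\mathfrak{C}$, and that restricting the right-hand side from $Bc_1\dots c_n$ back to $B$ preserves non-forking, which is immediate from the definition of forking (a set that does not fork over a base does not fork when we shrink the parameters on the right).
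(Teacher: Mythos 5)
Your overall plan (induction on tuple length via left transitivity) is the right one, and in fact mirrors the paper's proof, which runs the contrapositive of the same induction. However, there is a genuine error in the inductive step, and it sits exactly in the place you yourself flagged as ``the main point to get right.'' You instantiate left transitivity with right-hand parameter set $E=Bc_1\dots c_n$, and to discharge the first input you claim that $\indep{c_1\dots c_n}{B}{Bc_1\dots c_n}{f}$ holds trivially because $\tp(c_1\dots c_n/Bc_1\dots c_n)$ is a realized type. This is false: realized types can fork. Unwinding the definitions, $\indep{c_1\dots c_n}{B}{Bc_1\dots c_n}{f}$ says that $\tp(c_1\dots c_n/Bc_1\dots c_n)$ does not fork over $B$; since this type contains $\tp(c_1\dots c_n/B)$, the statement is \emph{strictly stronger} than the inductive hypothesis $\indep{c_1\dots c_n}{B}{B}{f}$. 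For instance, if $\notIndep{c_1}{B}{B}{f}$, then $\tp(c_1/Bc_1)$ forks over $B$ even though it is realized. A telling symptom of the gap is that, after stating the inductive hypothesis, your argument never actually invokes it.

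The fix is small: run left transitivity with right-hand side $E=B$ instead of $E=Bc_1\dots c_n$. The first input is then $\indep{c_1\dots c_n}{B}{B}{f}$, which is exactly the inductive hypothesis, and the second input $\indep{c_{n+1}}{Bc_1\dots c_n}{B}{f}$ unpacks to ``$\tp(c_{n+1}/Bc_1\dots c_n\cup B)=\tp(c_{n+1}/Bc_1\dots c_n)$ does not fork over $Bc_1\dots c_n$,'' which is precisely the singleton hypothesis applied to $Bc_1\dots c_n\in\mathfrak C$. The conclusion $\indep{c_1\dots c_{n+1}}{B}{B}{f}$ is what you wanted. With this correction your argument is the forward form of the paper's contrapositive induction, and needs no appeal to any claim about realized types.
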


\begin{proof}
The first condition clearly implies the second one.
\par Suppose the first condition fails. Let $C$ be a small subset of $\mathfrak{A}$ so that $\notIndep{C}{B}{B}{f}$. By left finite character, there exist singletons $c_1...c_n\in C$ so that $\notIndep{c_1...c_n}{B}{B}{f}$. By (the contraposite of) left transitivity, there must exist $i$ so that $\notIndep{c_{i+1}}{Bc_1...c_i}{B}{f}$, so $\notIndep{c_{i+1}}{Bc_1...c_i}{Bc_1...c_i}{f}$ by definition. However, by hypothesis, we have $Bc_1...c_i\in\mathfrak{C}$, so the second condition fails, and we get the equivalence.
\end{proof}

\begin{lemma}\label{reducK1}
Suppose $\mathcal{M}$ is a Henselian valued field of residue characteristic zero. Let $A$ be a small subset of $\mathcal{M}^{eq}$. Suppose $\indep{C}{A}{A}{f}$ for every small subset $C$ of $K(\mathcal{M})$. Then $A$ is an extension base.
\end{lemma}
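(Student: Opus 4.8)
The plan is to reduce the general case of an arbitrary small $C \subset \mathcal{M}^{eq}$ to the case $C \subset K(\mathcal{M})$, which is exactly the hypothesis. The natural tool is Lemma \ref{transBExt}: I would apply it with $\mathfrak{A} = \mathcal{M}^{eq}$, or better, try to set things up so the class $\mathfrak{C}$ of "allowed bases" is closed under adjoining singletons from whichever sorts show up. But the real issue is that a singleton $c \in \mathcal{M}^{eq}$ is an imaginary, so the hypothesis (which speaks only about subsets of $K$) does not directly apply to $Ac$. So first I would recall that every imaginary element is coded by a finite tuple from the geometric sorts (or, more crudely, is in the definable closure of a finite tuple from $K$). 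This means that for any small $C \subset \mathcal{M}^{eq}$ there is a small $C' \subset K(\mathcal{M})$ with $C \subset \dcl^{eq}(AC')$, hence $\acl^{eq}(AC) \subset \acl^{eq}(AC')$.

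With that in hand, the argument is short: by hypothesis $\indep{C'}{A}{A}{f}$ since $C' \subset K(\mathcal{M})$. Now I would invoke Proposition \ref{leftAcl} with the roles $C \leftsquigarrow C$, $D \leftsquigarrow C'$, and $B \leftsquigarrow A$: since $\acl(AC) \subset \acl(AC')$ and $\indep{C'}{A}{A}{f}$, we get $\indep{C}{A}{A}{f}$. As $C$ was an arbitrary small subset of $\mathcal{M}^{eq}$, this says precisely that $A$ is an extension base.

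The one subtlety I would want to nail down carefully is the passage from an imaginary parameter set to a real one: in a valued field with elimination of imaginaries down to the geometric sorts one has more than just $\dcl^{eq}$-generation by $K$-tuples, but the crude fact that every single imaginary $e$ satisfies $e \in \dcl^{eq}(\bar a)$ for some finite tuple $\bar a$ from $K$ is all that is needed, and this holds in any structure (it is just the definition of $\mathcal{M}^{eq}$ — every element of a quotient sort is the image of a real tuple under a $\emptyset$-definable map). Taking the union of such witnessing tuples over all $c \in C$ produces the desired $C' \subset K(\mathcal{M})$ of the same cardinality, which is still small. So the "hard part" here is essentially bookkeeping rather than a genuine obstacle; the mathematical content is entirely in \ref{leftAcl}, which has already been established.

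Concretely, the proof I would write is: let $C$ be a small subset of $\mathcal{M}^{eq}$. For each $c \in C$, pick a finite tuple $\bar a_c$ from $K(\mathcal{M})$ with $c \in \dcl^{eq}(\bar a_c)$, and set $C' = \bigcup_{c \in C} \bar a_c \subset K(\mathcal{M})$, a small set. Then $C \subset \dcl^{eq}(AC')$, so $\acl(AC) \subset \acl(AC')$. By hypothesis $\indep{C'}{A}{A}{f}$, so by Proposition \ref{leftAcl} (applied with $B = A$, $D = C'$) we conclude $\indep{C}{A}{A}{f}$. Since $C$ was arbitrary, $A$ is an extension base.
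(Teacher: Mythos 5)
Your proof is correct and follows the same route as the paper's: for each $c\in C$ you pick a $K$-tuple $\bar a_c$ with $c\in\dcl^{eq}(\bar a_c)$, take the union $C'$, note $\acl(AC)\subset\acl(AC')$, invoke the hypothesis for $C'\subset K(\mathcal{M})$, and then close with Proposition \ref{leftAcl}. The only cosmetic difference is that the paper phrases the reduction via a surjective $\emptyset$-definable function onto the sort of $c$, while you phrase it via $\dcl^{eq}$-generation by a finite $K$-tuple; these are the same observation.
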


\begin{proof}
Suppose $\mathcal{M}$ is a Henselian valued field of residue characteristic zero. Let $C$ be a small subset of $\mathcal{M}^{eq}$. For each $c\in C$, the sort of $\mathcal{M}^{eq}$ that contains $c$ is the image of $K$ under some surjective $\emptyset$-definable function $f_c$ (in fact $\mathcal{M}^{eq}=\dcl^{eq}(K(\mathcal{M}))$), so we can find $d_c\in K(\mathcal{M})$ so that $f_c(d_c)=c$. Let $D$ be the reunion of all the $d_c$. Then $D$ is a small subset of $K(\mathcal{M})$ for which $C\subset\dcl(D)$, so $\acl(AC)\subset\acl(AD)$. By hypothesis, we have $\indep{D}{A}{A}{f}$, so we get $\indep{C}{A}{A}{f}$ by using \ref{leftAcl}. This holds for every $C$, so $A$ is an extension base.
\end{proof}

Note that the above lemma applies more generally to every first-order structure, where the sort $K$ would be replaced by the reunion of the home sorts of the structure at hand.

\begin{corollary}\label{reducK2}
Let $\mathfrak{C}$ be a class of small subsets of $\mathcal{M}$ for which, for every $A\in\mathfrak{C}$, $c\in K(\mathcal{M})$, we have $Ac\in \mathfrak{C}$. Then the following conditions are equivalent:
\begin{itemize}
\item Every element of $\mathfrak{C}$ is an extension base.
\item For every $A\in\mathfrak{C}$, for every singleton $c\in K(\mathcal{M})$, we have $\indep{c}{A}{A}{f}$.
\end{itemize}
\end{corollary}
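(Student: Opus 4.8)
The plan is to obtain this as a formal consequence of Lemma \ref{transBExt} and Lemma \ref{reducK1}, so that the proof amounts to feeding the right objects into those two statements.

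The implication from the first condition to the second is immediate: if every $A\in\mathfrak{C}$ is an extension base, then since a singleton $c\in K(\mathcal{M})$ is a small subset of $\mathcal{M}$, the definition of extension base gives $\indep{c}{A}{A}{f}$.

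For the converse, assume $\indep{c}{A}{A}{f}$ for every $A\in\mathfrak{C}$ and every singleton $c\in K(\mathcal{M})$. I would apply Lemma \ref{transBExt} with $\mathfrak{A}:=K(\mathcal{M})$, which is large since it is (contained in) the home sort of the monster model, and with the given class $\mathfrak{C}$. The hypothesis of Lemma \ref{transBExt} that $Bc\in\mathfrak{C}$ for every $B\in\mathfrak{C}$ and every singleton $c\in\mathfrak{A}$ is exactly the closure hypothesis imposed on $\mathfrak{C}$ in the statement, specialized to $\mathfrak{A}=K(\mathcal{M})$. The second bullet of Lemma \ref{transBExt} is then precisely our standing assumption, so the first bullet holds: for every $B\in\mathfrak{C}$ and every small subset $C$ of $K(\mathcal{M})$ we have $\indep{C}{B}{B}{f}$. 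Now fix an arbitrary $A\in\mathfrak{C}$. Since a small subset of $K(\mathcal{M})$ is in particular a small subset of $\mathcal{M}^{eq}$, the previous line says exactly that $\indep{C}{A}{A}{f}$ for every small subset $C$ of $K(\mathcal{M})$, which is the hypothesis of Lemma \ref{reducK1}; that lemma yields that $A$ is an extension base. As $A\in\mathfrak{C}$ was arbitrary, every element of $\mathfrak{C}$ is an extension base, and we get the equivalence.

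There is no genuine obstacle here; the only points requiring care are bookkeeping ones: one must invoke Lemma \ref{transBExt} with the singleton hypothesis over the \emph{whole} class $\mathfrak{C}$ (its equivalence is stated uniformly in $B\in\mathfrak{C}$, so passing through a fixed $A$ alone would not suffice), one uses that $K(\mathcal{M})$ is large, and one uses that small subsets of $K(\mathcal{M})$ count as small subsets of $\mathcal{M}^{eq}$ so that Lemma \ref{reducK1} is applicable.
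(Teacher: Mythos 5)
Your proposal is correct and follows exactly the paper's own route: the paper's proof is a one-line invocation of Lemma \ref{reducK1} and Lemma \ref{transBExt} with $\mathfrak{A}=K(\mathcal{M})$, which is precisely what you do, just with the bookkeeping spelled out.
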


\begin{proof}
This is an immediate consequence of \ref{reducK1} and \ref{transBExt} (with $\mathfrak{A}=K(\mathcal{M})$).
\end{proof}

\begin{definition}
Suppose $\mathcal{M}$ is a field. We denote classical field algebraic independence by $\indep{C}{A}{B}{alg}$, that is, for every finite tuple $c$ from the field generated by $AC$, the transcendence degree of $c$ over the field generated by $A$ equals its transcendence degree over the field generated by $AB$.
\end{definition}

Instead of checking the definition for every finite tuple, one can equivalently check equality of the transcendence degrees for every finite tuple that is algebraically independent over the field generated by $A$.

\begin{fact}
If $\mathcal{M}$ is a field, then $\indep{}{}{}{f}\subset\indep{}{}{}{alg}$.
\end{fact}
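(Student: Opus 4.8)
The plan is to prove the contrapositive: if $\notIndep{C}{A}{B}{alg}$ then $\notIndep{C}{A}{B}{f}$. By \ref{forkingEZ}(1) we may assume $A=\acl(A)$; since in a field $\acl(A)$ is an algebraic extension of the field generated by $A$, and transcendence degrees are insensitive to algebraic extensions of the base, this replacement does not change $\indep{}{}{}{alg}$. Thus $A$ is a relatively algebraically closed subfield of $\mathcal{M}$, and for every $X\supseteq A$ the set $\acl(X)$ is the relative field-theoretic algebraic closure of the field $A(X)$ generated by $X$ over $A$.

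The key reduction is purely algebraic. From $\notIndep{C}{A}{B}{alg}$ we get a finite tuple $\bar c$ from $A(C)$ with $\mathrm{trdeg}(\bar c/A)>\mathrm{trdeg}(\bar c/A(B))$. I claim there is a single element $d\in A(C)$ transcendental over $A$ but algebraic over $A(B)$. Indeed, put $F=A(\bar c)$, $L=A(B)$, $E=F\cap L^{alg}$. The classical fact that the compositum $FL$ is a free join over $E$ gives $\mathrm{trdeg}(FL/L)=\mathrm{trdeg}(F/E)$; together with the tower law $\mathrm{trdeg}(F/A)=\mathrm{trdeg}(F/E)+\mathrm{trdeg}(E/A)$ (legitimate since $A\subseteq E\subseteq F$) and the hypothesis $\mathrm{trdeg}(F/A)=\mathrm{trdeg}(\bar c/A)>\mathrm{trdeg}(\bar c/L)=\mathrm{trdeg}(FL/L)$, this forces $\mathrm{trdeg}(E/A)>0$. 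Hence $E=F\cap L^{alg}$ contains an element $d$ transcendental over $A$, and by construction $d\in F\subseteq A(C)$ is algebraic over $L=A(B)$.

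Given such a $d$, fix a finite $\bar b\subseteq B$ with $d$ algebraic over $A(\bar b)$, so $d\in\acl(A\bar b)$ while $d\notin\acl(A)=A$ (it is transcendental over $A$). I would show $\notIndep{d}{A}{B}{f}$. Suppose not; then $\indep{d}{A}{\bar b}{f}$, so by the extension property for non-forking (\ref{bonnesExtensions}) the type $\tp(d/A\bar b)$ extends to a complete type over some $|A\bar b|^{+}$-saturated, strongly-$|A\bar b|^{+}$-homogeneous model $M\supseteq A\bar b$ that does not fork over $A$; let $d'$ realize it. Since $d'\equiv_{A\bar b}d$, $d'$ is transcendental over $A$ and algebraic over $A(\bar b)$, hence $d'\in\acl(A\bar b)\subseteq M$. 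By \ref{forkingEZ}(2), $\tp(d'/M)$ does not divide over $A$; but $\tp(d'/M)$ is the formula ``$x=d'$'', and this divides over $A$ whenever $d'$ has more than one conjugate over $A$ (pick automorphisms of $\mathcal{M}$ fixing $A$ sending $d'$ to infinitely many distinct values, possible as $d'$ is transcendental over $A$). Thus $d'$ has finite orbit over $A$, i.e. $d'\in\acl(A)=A$, contradicting transcendence of $d'$ over $A$. Therefore $\notIndep{d}{A}{B}{f}$; writing $d$ as a rational function of $A$ and finitely many $C_{0}\subseteq C$ we have $\acl(Ad)\subseteq\acl(AC_{0})$, so \ref{leftAcl} yields $\notIndep{C_{0}}{A}{B}{f}$, and hence $\notIndep{C}{A}{B}{f}$.

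The only non-bookkeeping ingredient is the free-join fact $\mathrm{trdeg}(FL/L)=\mathrm{trdeg}\bigl(F/(F\cap L^{alg})\bigr)$, i.e. that two subfields of a common field are algebraically independent over the algebraic closure of their intersection — classical, but the honest proof requires a little care with transcendence bases, so I would either cite a standard field-arithmetic reference or spell out that argument. Everything else is a routine use of the extension property for forking and of the coincidence of forking and dividing over a sufficiently saturated homogeneous model, together with \ref{leftAcl}.
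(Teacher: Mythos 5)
The base step (killing a single element $d\in\acl(A\bar b)\setminus\acl(A)$ via the extension property, a saturated model, and the observation that the algebraic formula ``$x=d'$'' divides over $A$ once $d'$ has infinitely many $A$-conjugates) is correct, and the use of \ref{leftAcl} to finish is fine. Note, though, that the two places where you slide from ``$d$ transcendental over $A$'' to ``$d\notin\acl(A)$'' rely on the non-trivial theorem that in an arbitrary field the model-theoretic algebraic closure of a set equals the relative field-theoretic algebraic closure of the subfield it generates; that deserves a citation rather than being folded into the definition of $\acl$.

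The genuine gap is the algebraic reduction to a single element. The ``free-join fact'' $\mathrm{trdeg}(FL/L)=\mathrm{trdeg}\bigl(F/(F\cap L^{alg})\bigr)$ is false, and with it the conclusion that there exists some $d\in A(C)$ transcendental over $A$ but algebraic over $A(B)$. Take $A=\mathbb{Q}$ and $a,b,c$ algebraically independent in a large algebraically closed field; put $d=ac+b$, $L=\mathbb{Q}(a,b)$, $F=\mathbb{Q}(c,d)$. Then $\mathrm{trdeg}(F/A)=2$ while $\mathrm{trdeg}(FL/L)=\mathrm{trdeg}\bigl(\mathbb{Q}(a,b,c)/\mathbb{Q}(a,b)\bigr)=1$, so $\notIndep{F}{A}{L}{alg}$; yet $F\cap L^{alg}=\mathbb{Q}$, since any $e\in\mathbb{Q}(c,d)\subseteq\mathbb{Q}(a,b)(c)$ algebraic over $\mathbb{Q}(a,b)$ must lie in $\mathbb{Q}(a,b)$, and $\mathbb{Q}(a,b)\cap\mathbb{Q}(c,ac+b)=\mathbb{Q}$ (differentiate a putative identity $P(a,b)=Q(c,ac+b)$ with respect to $c$ to force $Q$ constant). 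So no element of $F$ is simultaneously transcendental over $A$ and algebraic over $L$, and your reduction produces nothing to hand to the base lemma. What one actually gets from $\notIndep{C}{A}{B}{alg}$ is a coordinate $c_{m+1}$ with $c_{m+1}\in\acl(A\bar b\,c_1\ldots c_m)\setminus\acl(A\,c_1\ldots c_m)$; to conclude $\notIndep{\bar c}{A}{B}{f}$ from $\notIndep{c_{m+1}}{A\,c_1\ldots c_m}{B}{f}$ one needs a form of left base-monotonicity for forking (``$\indep{\bar c}{A}{B}{f}\Rightarrow\indep{c_{m+1}}{A\,c_1\ldots c_m}{B}{f}$''), which is not among the properties recalled in the paper and must be supplied; the argument as written does not close this gap.
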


%
%
%

\begin{remark}
If $\mathcal{M}$ is a field, and $\indep{}{}{}{f}=\indep{}{}{}{alg}$, then it is easy to observe that every set is an extension base.
\end{remark}


\section{Main results}\label{sectMain}

In this section, we fix $\kappa>\aleph_0$, $\mathcal{M}$ a $\kappa$-saturated and $\kappa$-strongly homogeneous Henselian valued field of residue characteristic zero.

\par In order to study extension bases, we will use the argument of \ref{reducK2} to reduce to the case of unary types. We will start this section by giving sufficient conditions for a global unary type to be non-forking over some parameter set $A$. We have three distinct cases that correspond to the three following lemmas. The cases depend on the nature of the chain of $A$-balls of which our unary type is $A$-generic, as well as whether $A$ is generated by parameters from $K$.
\par The following lemma deals with the case of a non-residual chain, with $A$ not necessarily generated by field parameters.

\begin{lemma}\label{indepGamma}
Let $A$ be a small subset of $\mathcal{M}^{eq}$, $\mathfrak{B}$ a chain of $A$-balls, $B$ a $(2^{|A|})^{++}$-saturated, strongly-$(2^{|A|})^{++}$-homogeneous small elementary substructure of $\mathcal{M}^{eq}$ that contains $A$, and $c\in K(\mathcal{M})$. Assume the following conditions hold:
\begin{itemize}
\item $A=\acl^{eq}(A)$.
\item $\mathfrak{B}$ is not residual.
\item $c$ is $B$-generic of $\mathfrak{B}$.
\item There exists $b\in K(B)\cap\left(\cap\mathfrak{B}\right)$ so that $\indep{val(c-b)}{\Gamma(A)}{\Gamma(B)}{f}$ in the ordered group structure of $\Gamma(\mathcal{M})$.
\item For every $N<\omega$, $[k^* : (k^*)^N]$ is finite.
\end{itemize}
Then we have $\indep{c}{A}{B}{f}$.
\end{lemma}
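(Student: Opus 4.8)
The plan is to reduce $\indep{c}{A}{B}{f}$ to a statement about the value group, using the fact that $c$ is $B$-generic of a non-residual chain $\mathfrak{B}$ with a point $b\in K(B)\cap(\cap\mathfrak{B})$. Since $B$ is sufficiently saturated and homogeneous, by Fact \ref{forkingEZ}.(2) it suffices to show that $\tp(c/B)$ does not divide over $A$, and since $A=\acl^{eq}(A)$, dividing over $A$ is the same as dividing over $\acl^{eq}(A)$. The idea is that, $\mathfrak{B}$ being non-residual, the type of the $B$-generic $c$ over $B$ is controlled by the sequence of values $val(c-b')$ for $b'$ ranging over $K(B)$, and in fact — after translating by $b$ — by the single value $\gamma:=val(c-b)$ together with the data of which balls of $\mathfrak{B}$ it lands in; the residue/$\RV$ directions contribute nothing because the chain has no least closed ball.

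First I would translate by $b$ and work with $c-b$, so that $0\in\cap\mathfrak{B}$ (after re-centering the balls) and $\gamma=val(c-b)$. Next I would argue that $\tp(c/B)$ is determined, up to the freedom allowed by $\mathfrak{B}$, by $\tp_{\Gamma}(\gamma/\Gamma(B))$: concretely, if $\gamma'\equiv_{\Gamma(B)}\gamma$ realizes the same cut and $c'$ is any point of $K$ with $val(c'-b)=\gamma'$ that is still $B$-generic of $\mathfrak{B}$, then $c'\equiv_B c$. This is where Proposition \ref{zetaBall} (and the hypothesis $[k^*:(k^*)^N]<\omega$, via Lemma \ref{zetaRV}, to handle the $\RV$-part of quantifier-free types from Proposition \ref{flenner}) does the work: a non-residual pointed chain leaves no room for residue-field information, so the quantifier-free type of $c$ over $K(B)$ — which by \ref{flenner} is a boolean combination of conditions $rv(c-a)\in Z$ for $a\in K(B)$ — is forced once we know the value $\gamma$ and the cut it fills among the $val(a-b)$. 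Then I would use that $\indep{\gamma}{\Gamma(A)}{\Gamma(B)}{f}$ in the pure ordered group to transport a witness: a non-forking extension of $\tp_\Gamma(\gamma/\Gamma(B))$, or rather the absence of a dividing witness, pulls back. More carefully, given a putative $A$-dividing witness $(\sigma_n)_n\in Aut(\mathcal M^{eq}/A)^\omega$ and $N$ for the $\mathcal M$-definable set $\tp(c/B)$, each $\sigma_n$ acts on $\Gamma(\mathcal M)$ and on $\mathfrak{B}$; using that $\mathfrak{B}$ is $A$-invariant (its balls are $A$-balls) one reduces to the $\Gamma$-side, contradicting $\indep{\gamma}{\Gamma(A)}{\Gamma(B)}{f}$ together with Fact \ref{forkingEZ}.(2) in $\Gamma$. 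The saturation of $B$ chosen as $(2^{|A|})^{++}$ is exactly what is needed to run the compactness/pigeonhole argument of Lemma \ref{divisionBase} inside $\Gamma$ and to apply Proposition \ref{consistanceGenerique} to find a $B$-generic realization.

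A cleaner way to organize the last step is: take any formula $\phi(x)\in\tp(c/B)$; I want to show $\phi$ does not fork over $A$. By \ref{flenner} and \ref{zetaRV}, $\phi$ is implied by a finite conjunction of $K(B)$-formulas of the form "$c$ lies in a given $B$-ball" together with "$val(c-b)$ lies in a given $\Gamma(B)$-interval (mod $N$-divisibility)"; weakening $\phi$, it is enough to treat the single formula $\theta_\delta(x):\ "x\in\cap\mathfrak{B}"\wedge"val(x-b)=\delta"$-up-to-the-relevant-coarsening, where $\delta$ realizes $\tp_\Gamma(\gamma/\Gamma(B))$. Pushing $\theta$ through the $A$-definable map $val(\cdot-b)$ (legitimate since $b\in K(A')$ for the algebraically closed $A'$ we may replace $A$ by, or at worst $b\in K(B)$ and one argues over $B$ then descends via \ref{leftAcl}/\ref{rightAcl}) and using Proposition \ref{divisionPreimage} and Lemma \ref{fibresFinies}-type arguments, dividing of $\theta$ over $A$ would give dividing of the corresponding $\Gamma(B)$-interval over $\Gamma(A)$, contradicting the hypothesis. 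Finally, Proposition \ref{consistanceGenerique} guarantees that a $B$-generic point of $\mathfrak{B}$ realizing the transported $\Gamma$-type actually exists, so there is no loss in the reduction.

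The main obstacle I anticipate is the bookkeeping in the reduction from $\tp(c/B)$ to $\tp_\Gamma(\gamma/\Gamma(B))$: making precise that the non-residuality of $\mathfrak{B}$ genuinely kills all residue-field and $\RV$-level data, so that two $B$-generics of $\mathfrak{B}$ with the same value $val(\cdot-b)$ and the same cut are conjugate over $B$. This needs a careful application of \ref{flenner} (the refined form where the $K$-variable only occurs in terms $rv(x-a)$, $a\in K(B)$) combined with \ref{zetaRV} to show each such $rv(x-a)$-condition is, on the set of $B$-generics of $\mathfrak{B}$, equivalent to a condition purely on $val(x-a)$ modulo $N$-th powers — and then that $val(x-a)=val(x-b)$ or $=val(a-b)$ according to whether $a\in\cap\mathfrak{B}$, again by the non-residual/pointed structure. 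Once this "$\RV$-triviality along non-residual chains" lemma is in place, the forking statement follows from the ordered-group hypothesis by the standard transfer, so I would isolate that lemma first.
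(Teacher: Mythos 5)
Your overall plan---reduce forking of $\tp(c/B)$ to a statement about $\Gamma$ by pushing along $x\mapsto rv(x-b)$ and then $val$, using Lemma \ref{zetaRV} and finiteness of $[k^*:(k^*)^N]$ to turn the $\RV$-level data into $\Gamma$-level data---is the right one and matches the paper's. But there is a genuine gap in the step where you ``push through the $A$-definable map $val(\cdot-b)$'': this map is \emph{not} $A$-definable, since $b$ lies in $K(B)$ and nothing in the hypotheses puts $b$ in $A$ or in $\acl(A)$. Your two suggested workarounds both fail. Replacing $A$ by an $A'$ with $b\in K(A')$ changes both the base over which you want non-forking and the base $\Gamma(A)$ of the ordered-group hypothesis, so you cannot descend back to $A$. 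And Propositions \ref{leftAcl} and \ref{rightAcl} only absorb elements of $\acl(AC)$ on the left or replace $B$ by $\acl(AB)$ on the right; neither moves an arbitrary $b\in K(B)$ into the base. Without fixing this, Proposition \ref{divisionPreimage} and Lemma \ref{fibresFinies}, which require an $A$-definable map, simply do not apply. The paper resolves this with Lemma \ref{divisionBase}: given a dividing $B$-definable set $X'=\{x: rv(x-b)\in X\}$, find an $A$-conjugate $b'$ of $b$ over which $X'$ still divides, observe that $val(c-b)<val(b-b')$ forces $rv(x-b)\in X\Leftrightarrow rv(x-b')\in X$ after shrinking $X$, and then a witness for division of $X'$ over $Ab'$ becomes a witness for division of the translate $X'-b'$ over $A$, after which $rv$ itself (a genuinely $\emptyset$-definable map) can be used. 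This device is what makes the translation legitimate, and it is absent from your argument.

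A few smaller points: you invoke Proposition \ref{consistanceGenerique}, which plays no role in this lemma (it is used in Lemma \ref{ramifieReel} instead). You also claim $\tp(c/B)$ is controlled by $\tp_\Gamma(val(c-b)/\Gamma(B))$, but Proposition \ref{zetaBall} only determines $\tp(c/B)$ from the finer invariant $rv(c-b)$; the passage from $rv$ to $val$ is precisely what Lemma \ref{zetaRV} together with the observation that $val(c-b)\notin\Gamma(B)$ (which lets you discard the exceptional fibers $X_i$) is needed for, and should be made explicit rather than folded in. Finally, in several places you write ``fork'' where ``divide'' is meant; the tools you cite are stated for dividing, and the passage from non-dividing to non-forking happens once at the top level via Fact \ref{forkingEZ}.(2), not formula by formula.
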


\begin{proof}
First of all, let us notice that the smallest closed ball that contains $b$ and $c$ is contained in $\cap\mathfrak{B}$, and this inclusion is strict, otherwise this closed ball would be the least element of $\mathfrak{B}$, contradicting the hypothesis. By genericity of $c$, this ball does not belong to $B$, but it has a point $b$ in $B$, so its radius $val(c-b)$ must not be in $\Gamma(B)$.
\par Suppose by contradiction that $\notIndep{c}{A}{B}{f}$. Then, by \ref{forkingEZ}, we have $\notIndep{c}{A}{B}{d}$. By \ref{zetaBall} ($B=\acl^{eq}(K(B))$), there exists $X$ a unary $B$-definable subset of $\RV$ that contains $rv (c-b)$ so that $X'=\left\lbrace x\in K|rv(x-b)\in X\right\rbrace$ divides over $A$. By \ref{divisionBase}, there exists $b'\in B$ so that $b\equiv_A b'$ and $X'$ divides over $Ab'$. We have $val(c-b)<val(b-b')$, so $rv(c-b)=rv(c-b')$. Thus, by replacing $X$ by $X\cap val^{-1}(]-\infty, val(b-b')[)\ni rv(c-b)$, we have $rv(x-b)\in X\Longleftrightarrow rv(x-b')\in X$ for all $x\in K$, and we can suppose $b'=b$ without loss of generality. One can notice that a witness for division of $X'$ over $Ab'$ is also a witness for division of the translate $X'-b'$ over $A$, so we can suppose $b=b'=0$ without loss of generality (so $Ab'=A$). By \ref{divisionPreimage}, we can deduce that $X$ divides over $A$, as $X'$ is the preimage of $X$ under the $A$-definable function $rv$. Now, let $(\gamma_i)_i$, $(X_i)_i$, $N<\omega$ be a witness of Lemma \ref{zetaRV} applied to $X$. The family $(\gamma_i)_i$ is finite, so we can assume that the $\gamma_i$ are algebraic over $B$, ie they belong to $\Gamma(B)$, so they are all distinct from $val(c)$. As a result, we can replace $X$ by $X\setminus\left(\bigcup\limits_i X_i\right)$ without loss of generality. As $X$ is invariant under multiplication by $(k^*)^N$, $X$ is the preimage of $\bar{X}=X \mod (k^*)^N$, so $\bar{X}$ divides over $A$. The $A$-definable group homomorphism $val\ :\ \faktor{\RV}{(k^*)^N}\longrightarrow \Gamma$ has a finite kernel $\faktor{k^*}{(k^*)^N}$. We can apply \ref{fibresFinies} to the relation:
$$
R=\left\lbrace (x, y)\in \Gamma\times \faktor{\RV}{(k^*)^N}|val(y)=x\right\rbrace
$$
to show that $val(\bar{X})$ divides over $A$. Now we are done: take a witness for division $(\sigma_n)_n$ of $val(\bar{X})$ over $A$, $({\sigma_n}_{|\Gamma(\mathcal{M})})_n$ is a witness for division of $val(\bar{X})$ over $\Gamma(A)$ in the ordered group $\Gamma(\mathcal{M})$ ; we know that $val(\bar{X})$ is $\Gamma(B)$-definable by \ref{stablementPlonge}, and $val(c)=val(c-b)\in val(\bar{X})$, so $\notIndep{val(c-b)}{\Gamma(A)}{\Gamma(B)}{d}$, a contradiction.
\end{proof}

\begin{remark}
In the above lemma, the notions of $B$-genericity and $B$-weak-genericity are the same: $B$ is a model, so every $B$-ball is pointed.
\end{remark}

\par The following lemma deals with the case of a residual chain, with $A$ not necessarily generated by field parameters.

\begin{lemma}\label{indepKImaginaire}
Let $A$ be a small subset of $\mathcal{M}^{eq}$, $Z$ a closed $A$-ball, $B$ a $(2^{|A|})^{++}$-saturated, strongly-$(2^{|A|})^{++}$-homogeneous small elementary substructure of $\mathcal{M}^{eq}$ that contains $A$, and $c\in K(\mathcal{M})$. Assume the following conditions hold:
\begin{itemize}
\item $A=\acl^{eq}(A)$.
\item $c$ is $B$-generic of $\{Z\}$.
\item In the field $k(\mathcal{M})$, we have $\indep{}{}{}{f}=\indep{}{}{}{alg}$.
\end{itemize}
Then we have $\indep{c}{A}{B}{f}$.
\end{lemma}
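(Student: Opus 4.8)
The plan is to suppose $\notIndep{c}{A}{B}{f}$ and derive a contradiction by carrying a dividing formula from the valued field down into the residue field, where the hypothesis that nonforking equals algebraic independence forbids it. Since $B$ is a $(2^{|A|})^{++}$-saturated, strongly homogeneous elementary substructure containing $A$, \ref{forkingEZ} gives $\notIndep{c}{A}{B}{d}$, so some $B$-definable $X\ni c$ divides over $A$; replacing $X$ by $X\cap Z$ we may assume $X\subseteq Z$.

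First I would reduce to the standard closed ball $\mathcal{O}$. As $B$ is a model, $Z$ is pointed, and $rad(Z)\in\Gamma(B)$ equals $val(a_0)$ for some $a_0\in K(B)$; fix also $b_0\in K(B)\cap Z$. Using \ref{divisionBase} together with the saturation of $B$, I can replace $b_0,a_0$ by $b',a'\in K(B)$ with $b'\in Z$, $val(a')=rad(Z)$, such that $X$ still divides over $Ab'a'$. Then $\phi\colon Z\to\mathcal{O}$, $x\mapsto(x-b')/a'$, is an $Ab'a'$-definable bijection, so $X^*:=\phi(X)\subseteq\mathcal{O}$ is $B$-definable and divides over $Ab'a'$ by \ref{divisionPreimage}, while $c^*:=\phi(c)\in X^*$ is $B$-generic of $\{\mathcal{O}\}$ because $\phi$ is a $B$-definable affine bijection. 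In particular $c^*$ lies in no $B$-definable open ball $e+\mathfrak{M}$ with $e\in K(B)$, so $\xi^*:=res(c^*)\notin k(B)$. Now $\acl^{eq}(B)\cap k=k(B)$ (as $B$ is a model) is relatively algebraically closed in $k(\mathcal{M})$ in the pure field; hence $\xi^*$ is transcendental over $k(B)$.

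Next I would push $X^*$ through the residue map. By \ref{zetaBall}, applied with base $B$ (legitimate since $B=\acl^{eq}(K(B))$) and the point $0\in K(B)\cap\mathcal{O}$, every $d\in\mathcal{O}$ with $res(d)=\xi^*$ satisfies $rv(c^*)=rv(d)$, hence $c^*\equiv_B d$, hence $d\in X^*$. Thus $res^{-1}(\xi^*)\subseteq X^*$, so the $B$-definable set $W:=\{\eta\in k : res^{-1}(\eta)\subseteq X^*\}$ contains $\xi^*$ and satisfies $res^{-1}(W)\subseteq X^*$. Since $res\colon\mathcal{O}\to k$ is $\emptyset$-definable and onto, and $res^{-1}(W)$ divides over $Ab'a'$ (being a subset of $X^*$), \ref{divisionPreimage} shows that $W$ divides over $Ab'a'$, hence over $A$, in $\mathcal{M}$. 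By stable embeddedness of $k$ (Proposition \ref{stablementPlonge}) and $B$ being a model, $W$ is definable over $k(B)$ in the pure field $k(\mathcal{M})$.

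Finally I would restrict a witness for division of $W$ over $A$ in $\mathcal{M}$ to the $\emptyset$-definable field $k$: the restricted automorphisms fix $k(A)=A\cap k$ pointwise, so $W$ divides over $k(A)$ in $k(\mathcal{M})$. On the other hand $\xi^*$ is transcendental over $k(B)\supseteq k(A)$, so $\indep{\xi^*}{k(A)}{k(B)}{alg}$, hence $\indep{\xi^*}{k(A)}{k(B)}{f}$ by hypothesis; since $W$ is a $k(B)$-definable set containing $\xi^*$, it cannot divide over $k(A)$ in $k(\mathcal{M})$, a contradiction. The step I expect to be the main obstacle is the reduction in the second paragraph: because $A$ is allowed to be imaginary, $Z$ need not be pointed over anything algebraic over $A$, so one must first absorb a center and a scaling factor into the base via \ref{divisionBase}, and then verify that the $B$-genericity of $c$ really forces the residue $\xi^*$ to be transcendental over $k(B)$ — not merely to avoid $k(B)$ — which is exactly what the concluding residue-field argument requires.
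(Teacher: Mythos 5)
Your proof is correct and follows essentially the same route as the paper's: reduce to $Z=\mathcal{O}$ via \ref{divisionBase} and a recentering/rescaling by parameters of $K(B)$, use \ref{zetaBall} with $B$-genericity to see that the dividing set on $\mathcal{O}$ can be taken to be a union of full residue fibres, push through $res$ via \ref{divisionPreimage}, and contradict $\indep{}{}{}{f}=\indep{}{}{}{alg}$ in $k$. Your set $W$ is precisely the image under $res$ of the paper's refined set $\left\lbrace x\in X'\mid\forall y\in \mathcal{O}\ \left(res(x)=res(y)\Longrightarrow y\in X'\right)\right\rbrace$, so the two arguments coincide step for step.
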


\begin{proof}
Suppose towards contradiction that $\notIndep{c}{A}{B}{d}$. Let $X$, $X'$ be as in the proof of \ref{indepGamma}. We can assume $X'\subset Z$. By \ref{divisionBase}, there exist $b$, $b'\in B$ so that $b\in Z$, $val(b')=rad(Z)$, and $X'$ divides over $Abb'$. Just like in the previous proof, a witness for division of $X'$ over $Abb'$ is a witness for division of $\frac{X'-b}{b'}$ over $A$, so we can assume $b=0$, $b'=1$, and $Z=\mathcal{O}$. By genericity of $c$, and by \ref{zetaBall}, we know that, for all $c'\in K(\mathcal{M})$, if $res(c')=res(c)$, then $c\equiv_B c'$. As a result, the set:
$$
\left\lbrace x\in X'|\forall y\in \mathcal{O}\ \left( res(x)=res(y)\Longrightarrow y\in X'\right)\right\rbrace
$$
is a $B$-definable subset of $X'$ containing $c$, so it still divides over $A$, and we can assume that $X'$ coincides with this set. Now $X'$ is the preimage of $res(X')$, so by \ref{divisionPreimage} $res(X')$ divides over $A$. By \ref{stablementPlonge} we have $\notIndep{res(c)}{k(A)}{k(B)}{f}$ in the field $k$, so by hypothesis $\notIndep{res(c)}{k(A)}{k(B)}{alg}$, and $res(c)$ is not transcendental over $k(B)$. This contradicts the genericity of $c$.
\end{proof}

In the next lemma, we also deal with the residual case, but without the assumption $\indep{}{}{}{f}=\indep{}{}{}{alg}$ in $k(\mathcal{M})$. However, we have to strengthen our hypothesis on $A$ to make it work, and suppose that $A$ is generated by field elements.

\begin{lemma}\label{indepKReel}
Let $A$ be a small subset of $\mathcal{M}^{eq}$, $Z$ a closed $A$-ball, $B$ a $(2^{|A|})^{++}$-saturated, strongly-$(2^{|A|})^{++}$-homogeneous small elementary substructure of $\mathcal{M}^{eq}$ that contains $A$, and $c\in K(\mathcal{M})$. Assume the following conditions hold:
\begin{itemize}
\item $A=\acl^{eq}(K(A))$.
\item $c$ is $B$-generic of $\{Z\}$.
\item There exist $b\in Z(A)$ and $b'\in K(A)$ so that $val(b')=rad(Z)$, and, in the pure field $k(\mathcal{M})$, we have $\indep{res\left(\frac{c-b}{b'}\right)}{k(A)}{k(B)}{f}$.
\end{itemize}
Then we have $\indep{c}{A}{B}{f}$.
\end{lemma}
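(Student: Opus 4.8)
The plan is to mimic the architecture of the proof of Lemma~\ref{indepKImaginaire}, the only difference being that its concluding appeal to $\indep{}{}{}{f}=\indep{}{}{}{alg}$ in the residue field is replaced by a direct use of the last hypothesis.

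Since $B$ is $(2^{|A|})^{++}$-saturated, strongly-$(2^{|A|})^{++}$-homogeneous and contains $A$, by \ref{forkingEZ} it is enough to prove $\indep{c}{A}{B}{d}$, so I would suppose towards a contradiction that $\notIndep{c}{A}{B}{d}$. As $B$ is a model, $B=\acl^{eq}(K(B))$, so \ref{zetaBall} applies over $B$, and exactly as in the proof of \ref{indepGamma} one extracts a $B$-definable $X\subset\RV$ with $rv(c-b)\in X$ such that $X'=\{x\in K(\mathcal{M})\mid rv(x-b)\in X\}$ divides over $A$; intersecting $X'$ with $Z$ (which is in $\tp(c/B)$) we may assume $X'\subset Z$. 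Here, unlike in \ref{indepKImaginaire} where \ref{divisionBase} was needed to pull $b$ and $b'$ into $B$, we are given $b\in Z(A)$ and $b'\in K(A)$, hence in $A$; so any witness for division of $X'$ over $A$ is also a witness for division of $(X'-b)/b'$ over $A$, and replacing $c$ by $(c-b)/b'$ and $X'$ by $(X'-b)/b'$ we may assume $Z=\mathcal{O}$, $b=0$, $b'=1$. After this reduction $c$ is still $B$-generic of $\{\mathcal{O}\}$ and the last hypothesis becomes $\indep{res(c)}{k(A)}{k(B)}{f}$ in the pure field $k(\mathcal{M})$.

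Now I would exploit genericity exactly as in \ref{indepKImaginaire}: $\mathfrak{M}$ is a proper $B$-subball of $\mathcal{O}$, so $res(c)\neq 0$, and by \ref{zetaBall} (with $\cap\mathfrak{B}=\mathcal{O}$, which has the point $0\in K(B)$) any $c'\in K(\mathcal{M})$ with $res(c')=res(c)$ satisfies $c\equiv_B c'$. Consequently the $B$-definable set $\{x\in X'\mid\forall y\in\mathcal{O}\ (res(y)=res(x)\Rightarrow y\in X')\}$ contains $c$ and is contained in $X'$, hence divides over $A$; replacing $X'$ by it, we get $X'=res^{-1}(res(X'))$, so \ref{divisionPreimage} yields that $res(X')$ divides over $A$.

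To finish, $res(X')$ is a $B$-definable subset of $k$, so by \ref{stablementPlonge} (and since $B$ is a model) it is defined over $k(B)$ in the pure field $k(\mathcal{M})$; restricting a witness for division of $res(X')$ over $A$ to the sort $k$ then exhibits a witness for division of $res(X')$ over $k(A)$ in the pure field $k(\mathcal{M})$. Since $res(c)\in res(X')$, this gives that $\tp(res(c)/k(B))$ divides, hence forks, over $k(A)$, i.e.\ $\notIndep{res(c)}{k(A)}{k(B)}{f}$, contradicting the last hypothesis. I expect the step needing the most care to be this last transfer --- from ``$res(X')$ divides over $A$'' in $\mathcal{M}^{eq}$ to ``$res(X')$ divides over $k(A)$'' in the pure residue field --- which relies on the stable embeddedness of $k$ (hence on \ref{stablementPlonge}) together with the fact that the set at hand is definable over the model $B$ and not merely over an arbitrary parameter set; the extraction of the single-parameter formula $X'$ via \ref{flenner} and \ref{zetaBall} is the other delicate point, but it is literally the one already performed in the proof of \ref{indepGamma}.
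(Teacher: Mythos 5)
Your proof is correct and follows essentially the same route as the paper: the paper defines $c'=\frac{c-b}{b'}$, invokes ``a reasoning similar to the previous lemma'' (i.e.\ the argument of \ref{indepKImaginaire}, with the appeal to $\indep{}{}{}{f}=\indep{}{}{}{alg}$ in $k$ replaced by the given hypothesis) to get $\indep{c'}{A}{B}{f}$, and then transfers back to $c$ via \ref{leftAcl} using $b,b'\in K(A)$. You instead perform the $A$-definable affine change of variables up front, which makes \ref{leftAcl} unnecessary and, as you correctly note, also makes \ref{divisionBase} unnecessary (the translation parameters are already in $A$); the rest of your argument is a faithful spelling-out of the ``reasoning similar to the previous lemma'' that the paper only sketches, including the correct transfer of a witness for division from $\mathcal{M}^{eq}$ over $A$ to the pure field $k(\mathcal{M})$ over $k(A)$ by restriction of automorphisms.
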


\begin{proof}
Let $c'=\frac{c-b}{b'}$. It is clear that $c'$ is $B$-generic of $\mathcal{O}$. As $\indep{res\left(\frac{c-b}{b'}\right)}{k(A)}{k(B)}{f}$ in the field $k(\mathcal{M})$, we have $\indep{c'}{A}{B}{f}$ with a reasoning similar to the previous lemma. Now, this time $b$, $b'\in K(A)$, so $c\in\acl(Ac')$, so $\indep{c}{A}{B}{f}$ by \ref{leftAcl}.
\end{proof}

Now that we have seen several sufficient conditions for a global unary type to be non-forking over $A$, we will need to be able to build these types, ie realize these conditions. It will be easy in some cases, and the construction will be described quickly in the proofs of our theorems. The two following lemmas deal with cases where a more technical approach is required. The next lemma will be used to build a non-forking global extension of the type of an $A$-generic point of an $A$-immediate chain. The second lemma does the same for the generic of an $A$-ramified chain, but there we will have to assume that $A$ is generated by field elements.

\begin{remark}
In the residual case, the non-forking global extension is easy to build when $\indep{}{}{}{f}=\indep{}{}{}{alg}$ in $k(\mathcal{M})$: Lemma \ref{indepKImaginaire} merely requires $c$ to be $B$-generic of the correct chain. One can note that this lemma holds with a weaker hypothesis: instead of requiring $\indep{}{}{}{f}=\indep{}{}{}{alg}$ in $k(\mathcal{M})$, we can just suppose that for all $b$, $b'\in K(B)$, if $b\in Z$ and $val(b')=rad(Z)$, then we have $\indep{res\left(\frac{c-b}{b'}\right)}{k(A)}{k(B)}{f}$ in the field structure of $k(\mathcal{M})$. However, without the hypothesis $\indep{}{}{}{f}=\indep{}{}{}{alg}$ in $k(\mathcal{M})$, it is not clear whether the existence of the non-forking global extension can be established, because a random $B$-generic point of $\{Z\}$ might not satisfy these new conditions. Maybe such a point still exists, this is one of the interesting questions that naturally follow from this paper.
\end{remark}

\begin{lemma}\label{immediat}
Let $A$ be a small subset of $\mathcal{M}^{eq}$. Let $\mathfrak{B}$ be an non-residual chain of $A$-balls. Let $B$ be a $(2^{|A|})^{++}$-saturated, strongly-$(2^{|A|})^{++}$-homogeneous small elementary substructure of $\mathcal{M}^{eq}$ that contains $A$, and $b\in K(B)$ so that $b\in\cap\mathfrak{B}$. Suppose the following conditions hold:
\begin{itemize}
\item $A=\acl^{eq}(A)$.
\item In the ordered group $\Gamma(\mathcal{M})$, $\Gamma(A)$ is an extension base.
\end{itemize}
Then there exists $c\in K(\mathcal{M})$ so that $c$ is $B$-generic of $\mathfrak{B}$, and $\indep{val(c-b)}{\Gamma(A)}{\Gamma(B)}{f}$ in the ordered group structure.
\end{lemma}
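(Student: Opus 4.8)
The plan is to build \(c\) of the form \(c=b+x\) with \(x\in K(\mathcal{M})\) of a carefully chosen value \(\gamma=val(x)=val(c-b)\), and to do all the real work inside the pure ordered group \(\Gamma(\mathcal{M})\). Since \(b\in\cap\mathfrak{B}\), every \(X\in\mathfrak{B}\) has the form \(\{x:val(x-b)\,R_X\,rad(X)\}\), and \(rad(X)\in\Gamma(A)\) because \(X\) is an \(A\)-ball and \(A=\acl^{eq}(A)\). The first step is an ultrametric observation: for any \(\gamma\in\Gamma(\mathcal{M})\), the point \(b+x\) (with \(val(x)=\gamma\)) is \(B\)-generic of \(\mathfrak{B}\) as soon as \(\gamma\) is strictly above \(rad(X)\) for all \(X\in\mathfrak{B}\) and strictly below \(rad(Y)\) for every closed \(B\)-ball \(Y\) with \(b\in Y\subsetneq\cap\mathfrak{B}\). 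One checks this directly: membership of \(b+x\) in each \(X\in\mathfrak{B}\) is immediate; a \(B\)-ball \(Y\) with \(b\in Y\subsetneq\cap\mathfrak{B}\) has \(rad(Y)\) strictly above every radius of \(\mathfrak{B}\) (otherwise \(\cap\mathfrak{B}\) would be a closed ball), so \(b+x\notin Y\); and a \(B\)-ball \(Y\) with \(b\notin Y\), \(Y\subsetneq\cap\mathfrak{B}\), is handled by replacing it with the least ball containing \(Y\cup\{b\}\), a closed \(B\)-ball of radius \(val(Y-b)\in\Gamma(B)\).

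It then remains to choose \(\gamma\). I would split on whether \(\cap\mathfrak{B}\) is a closed ball. If it is, then by Proposition \ref{consistanceGenerique} applied to an \(A\)-generic point of \(\mathfrak{B}\) (one exists, since the genericity conditions form a consistent partial type over \(A\)) there is a \(B\)-generic point \(c\) of \(\mathfrak{B}\); for such \(c\) necessarily \(val(c-b)=rad(\cap\mathfrak{B})\in\Gamma(A)\), so \(\indep{val(c-b)}{\Gamma(A)}{\Gamma(B)}{f}\) holds trivially and we are done. So assume \(\cap\mathfrak{B}\) is not a closed ball, and let \(\mathfrak{d}(y)\) be the partial type over \(\Gamma(B)\) consisting of the formulas \(y>rad(X)\) for \(X\in\mathfrak{B}\) together with the formulas \(y<\delta\) for \(\delta\in\Gamma(B)\) lying strictly above every radius of a ball of \(\mathfrak{B}\). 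By the first paragraph, \(\mathfrak{d}(\gamma)\) forces \(b+x\) (with \(val(x)=\gamma\)) to be \(B\)-generic of \(\mathfrak{B}\), and conversely the value of any \(B\)-generic point realizes \(\mathfrak{d}\); in particular \(\mathfrak{d}\) is consistent, again by Proposition \ref{consistanceGenerique}.

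The conceptual heart is to show \(\mathfrak{d}\) does not fork over \(\Gamma(A)\) in \(\Gamma(\mathcal{M})\). First, \(\mathfrak{d}\) does not divide over \(\Gamma(A)\): a finite conjunction of formulas of \(\mathfrak{d}\) is equivalent to a single interval formula ``\(\rho<y<\delta\)'' with \(\rho\in\Gamma(A)\) a radius of a ball of \(\mathfrak{B}\) and \(\delta\in\Gamma(B)\) (here \(\rho<\delta\) because \(\mathfrak{d}\) is consistent); for any \((\sigma_n)_{n}\in Aut(\Gamma(\mathcal{M})/\Gamma(A))^{\omega}\) with \(\sigma_0=\mathrm{id}\), each \(\sigma_n\) fixes \(\rho\), so \(\sigma_n(\{\rho<y<\delta\})=\{\rho<y<\sigma_n(\delta)\}\) has left endpoint \(\rho\), and since every \(\Gamma(A)\)-conjugate of \(\delta\) is \(>\rho\), any finite intersection of these sets is the nonempty interval \((\rho,\min_n\sigma_n(\delta))\); hence no such family witnesses division, so neither the interval formula nor \(\mathfrak{d}\) divides over \(\Gamma(A)\). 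Now the theory of \(\Gamma(\mathcal{M})\) is a theory of ordered abelian groups, hence NIP, hence NTP\(_2\), and \(\Gamma(A)\) is an extension base by hypothesis; so by (\cite{KapCherNTP2}, Theorem 1.2) forking and dividing coincide over \(\Gamma(A)\), and \(\mathfrak{d}\) does not fork over \(\Gamma(A)\). Finally, by the extension property of non-forking (Remark \ref{bonnesExtensions}), \(\mathfrak{d}\) has a completion \(r\) over \(\Gamma(B)\) not forking over \(\Gamma(A)\); realizing \(r\) by some \(\gamma\in\Gamma(\mathcal{M})\) (by saturation), taking \(x\in K(\mathcal{M})\) with \(val(x)=\gamma\) and \(c=b+x\), the first paragraph gives that \(c\) is \(B\)-generic of \(\mathfrak{B}\), and \(tp(val(c-b)/\Gamma(B))=r\) does not fork over \(\Gamma(A)\), as desired. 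I expect the main friction to be purely bookkeeping — the case analysis on whether \(\cap\mathfrak{B}\) is a closed ball and the open/closed and discrete/dense distinctions needed to pin down exactly which \(\gamma\) make \(b+x\) generic — whereas the non-forking of \(\mathfrak{d}\) is short once one has the ``common left endpoint \(\Rightarrow\) nested'' remark and the NTP\(_2\) input.
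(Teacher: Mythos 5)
Your proof is correct and takes essentially the same route as the paper's: identify the set of admissible values of $val(c-b)$ as a cut in $\Gamma$ defined by the radii of $\mathfrak{B}$ from the left and by $\Gamma(B)$ from the right, observe that this cut does not divide over $\Gamma(A)$ because the $\Gamma(A)$-conjugates of the defining intervals all share a common left endpoint in $\Gamma(A)$ and are therefore nested, invoke NIP and the extension-base hypothesis to convert non-dividing into non-forking, take a non-forking completion, and pull the resulting value back to a point of $K(\mathcal{M})$, checking that it is $B$-generic of $\mathfrak{B}$.

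A couple of small remarks on the differences. Your first case, in which $\cap\mathfrak{B}$ is a closed ball, is actually vacuous under the standing hypotheses: since $\mathfrak{B}$ is a small non-residual chain and $\mathcal{M}$ is $\kappa$-saturated, either $\mathfrak{B}$ has an open least element (so $\cap\mathfrak{B}$ is open, and $\Gamma$ is dense so this ball is not closed), or $\mathfrak{B}$ has no least element, in which case $\bar{A}=\{rad(X):X\in\mathfrak{B}\}$ has no maximum and, by saturation of $\Gamma(\mathcal{M})$, no least upper bound, so $\cap\mathfrak{B}$ cannot be a closed ball. Treating it does no harm, but it is dead weight. Also, the paper derives the consistency of the cut $X$ (your $\mathfrak{d}$) directly inside the ordered group via Remark \ref{coupure}, whereas you derive it from \ref{consistanceGenerique} inside the valued field and then move back to $\Gamma$; both are valid, and the paper's route also packages the density-or-no-max case check that you leave implicit when asserting that $(\rho,\min_n\sigma_n(\delta))$ is nonempty (the cleanest justification there is that this interval equals $\sigma_{n_0}\bigl((\rho,\delta)\bigr)$ for the minimizing index $n_0$, hence is an automorphic image of the nonempty interval $(\rho,\delta)$). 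These are presentation-level differences; the mathematical content matches the paper's proof.
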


\begin{proof}
Define:
$$\bar{A}=\left\lbrace rad(X)|X\in\mathfrak{B}\right\rbrace\subset\Gamma(A)$$
$$\bar{B}=\left\lbrace \delta\in\Gamma(B)|\forall \gamma\in\bar{A}\ \gamma<\delta\right\rbrace$$
Let $X=\bigcap\limits_{\gamma\in\bar{A}, \delta\in\bar{B}}]\gamma, \delta[$, $(\sigma_n)_n\in Aut(\Gamma(\mathcal{M})/\Gamma(A))^\omega$, and $\bar{B}'=\bigcup\limits_n \sigma_n(\bar{B})$. Then $\bar{A}$ and $\bar{B}'$ satisfy the hypothesis of \ref{coupure}, so the type-definable set $Y=\bigcap\limits_{\gamma\in\bar{A}, \delta\in\bar{B}'}]\gamma, \delta[$ is non-empty. However, we have $Y=\bigcap\limits_n \sigma_n(X)$, so $(\sigma_n)_n$ cannot be a witness for division of (any $\Gamma(B)$-definable set containing) $X$ over $\Gamma(A)$. Therefore, $X$ does not divide over $\Gamma(A)$ in the ordered group $\Gamma(\mathcal{M})$. In this ordered group, the induced theory is dependent (\cite{NIPOAG}), and $\Gamma(A)$ is an extension base, so forking over $\Gamma(A)$ coincides with dividing over $\Gamma(A)$ (\cite{KapCherNTP2}, Theorem 1.2). As a result, $X$ does not fork over $\Gamma(A)$ in $\Gamma(\mathcal{M})$. By \ref{bonnesExtensions}, there exists $\gamma\in\Gamma(\mathcal{M})$ so that $\gamma\in X$, and $\indep{\gamma}{\Gamma(A)}{\Gamma(B)}{f}$.
\par Now we are done: a witness of the lemma will be any $c\in K(\mathcal{M})$ so that $val(c-b)=\gamma$. Such a point $c$ must be $B$-generic of $\mathfrak{B}$: $val(c-b)>\bar{A}$ implies $c\in\cap\mathfrak{B}$, and $val(c-b)<\bar{B}$ implies that $c$ does not belong to any smaller $B$-ball. 
\end{proof}

\begin{lemma}\label{ramifieReel}
Let $A$ be a small subset of $\mathcal{M}^{eq}$. Let $\mathfrak{B}$ be an $A$-ramified chain of $A$-balls. Let $B$ be a $(2^{|A|})^{++}$-saturated, strongly-$(2^{|A|})^{++}$-homogeneous small elementary substructure of $\mathcal{M}^{eq}$ that contains $A$, $c\in K(\mathcal{M})$, and $b\in K(A)$ so that $b\in\cap\mathfrak{B}$. Suppose the following conditions hold:
\begin{itemize}
\item $A=\acl^{eq}(K(A))$.
\item $c$ is $A$-generic of $\mathfrak{B}$.
\item In the ordered group $\Gamma(\mathcal{M})$, $\Gamma(A)$ is an extension base.
\end{itemize}
Then there exists $c'\equiv_A c$ so that $c'$ is $B$-generic of $\mathfrak{B}$, and $\indep{val(c'-b)}{\Gamma(A)}{\Gamma(B)}{f}$ in the ordered group structure.
\end{lemma}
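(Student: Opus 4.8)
\emph{Plan.} The idea is to follow the proof of Lemma \ref{immediat}, but to keep track of the type of $c$ over $A$ so that the point produced is actually $A$-conjugate to $c$. Write $\gamma_0=val(c-b)$. The first step is to observe that $\gamma_0\notin\Gamma(A)$: if it were, then, since $b\in K(A)$, the closed ball $\{x\mid val(x-b)\geqslant\gamma_0\}$ would be an $A$-ball; it would be contained in $\cap\mathfrak{B}$ (both $b$ and $c$ lie in every ball of $\mathfrak{B}$) and contain $c$, so by $A$-genericity it would equal $\cap\mathfrak{B}$, making $\mathfrak{B}$ residual, against the hypothesis. The same kind of computation shows that $\gamma_0$ realises over $\Gamma(A)$ the cut $C$ given by ``$x$ is larger than every radius of a ball of $\mathfrak{B}$ and smaller than every element of $\Gamma(A)$ with that property''; in particular, in the discrete case this cut has no maximal element below it (otherwise $\gamma_0$ could not be strictly between that element and its successor).

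Next I reproduce the ordered-group construction of Lemma \ref{immediat}. Put $\bar A=\{rad(X)\mid X\in\mathfrak{B}\}\subseteq\Gamma(A)$ and $\bar B=\{\delta\in\Gamma(B)\mid\forall\gamma\in\bar A\ \gamma<\delta\}$, and consider the partial type over $\Gamma(B)$ consisting of $tp(\gamma_0/\Gamma(A))$ together with the formulas $x<\delta$ for $\delta\in\bar B$. I claim this partial type is consistent and does not divide over $\Gamma(A)$ in the ordered group $\Gamma(\mathcal{M})$. Both facts follow as in \ref{immediat} from the point that $tp(\gamma_0/\Gamma(A))$ is the type of a realisation of the \emph{immediate} cut $C$: for any finite conjunction $\varphi$ from $tp(\gamma_0/\Gamma(A))$ and any $\delta\in\Gamma(\mathcal{M})$ with $\delta>\bar A$, there is a realisation of $\varphi$ strictly below $\delta$, found by a saturation argument in $\Gamma(\mathcal{M})$ that uses that $\bar A$ has no largest element (or that $\Gamma$ is dense), so that the congruence conditions carried by $\varphi$ can still be met arbitrarily close to the cut. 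Applying this to $\bigcap_n\sigma_n$ of the type for $(\sigma_n)_n\in Aut(\Gamma(\mathcal{M})/\Gamma(A))^\omega$ shows it does not divide over $\Gamma(A)$; since the induced theory on $\Gamma(\mathcal{M})$ is NIP and $\Gamma(A)$ is an extension base, it does not fork over $\Gamma(A)$. By \ref{bonnesExtensions} we complete it to a type over $\Gamma(B)$ non-forking over $\Gamma(A)$ and realise it by some $\gamma\in\Gamma(\mathcal{M})$; thus $\gamma\equiv_{\Gamma(A)}\gamma_0$, $\gamma<\bar B$, and $\indep{\gamma}{\Gamma(A)}{\Gamma(B)}{f}$.

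Since $\Gamma$ is stably embedded with the pure ordered-group structure and $A$ is algebraically closed, $\gamma\equiv_{\Gamma(A)}\gamma_0$ gives $\gamma\equiv_A\gamma_0$ in $\mathcal{M}^{eq}$ (this is \ref{dominationValeurCorpique}). By strong homogeneity of $\mathcal{M}$ pick $\sigma\in Aut(\mathcal{M}/A)$ with $\sigma(\gamma_0)=\gamma$ and set $c'=\sigma(c)$; then $c'\equiv_A c$, and $val(c'-b)=\sigma(val(c-b))=\gamma$ because $b\in K(A)$, so $\indep{val(c'-b)}{\Gamma(A)}{\Gamma(B)}{f}$. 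It remains to see $c'$ is $B$-generic of $\mathfrak{B}$. As ``$x$ is $A$-generic of $\mathfrak{B}$'' is a partial type over $A$, $c'$ is $A$-generic of $\mathfrak{B}$, hence $c'\in\cap\mathfrak{B}$. Let $Y$ be a $B$-ball with $Y\subsetneq\cap\mathfrak{B}$. If $b\in Y$, comparing $Y$ with the balls of $\mathfrak{B}$ (which all contain $b$) forces $rad(Y)>\bar A$, so $rad(Y)\in\bar B$, $\gamma<rad(Y)$, and $c'\notin Y$. If $b\notin Y$, let $Y^+$ be the smallest closed ball containing $Y\cup\{b\}$: it is a pointed $B$-ball, contained in $\cap\mathfrak{B}$ by ultrametric convexity, and $Y^+\subsetneq\cap\mathfrak{B}$ because $\cap\mathfrak{B}$ is not a single closed ball (here one uses that $\mathcal{M}$ is $\kappa$-saturated and $\mathfrak{B}$ is non-residual); again $rad(Y^+)>\bar A$, so $\gamma<rad(Y^+)$ and $c'\notin Y^+\supseteq Y$. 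Either way $c'\notin Y$, so $c'$ is $B$-generic of $\mathfrak{B}$, which finishes the proof.

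The main obstacle is the consistency-and-non-forking claim for the partial type $tp(\gamma_0/\Gamma(A))\cup\{x<\delta\mid\delta\in\bar B\}$: one must show that the \emph{full} type of $\gamma_0$ over $\Gamma(A)$ — not merely its cut, which may carry congruence information when $\Gamma(\mathcal{M})$ is not divisible — can be realised arbitrarily close to the cut from above, which needs a careful saturation argument in the ordered group together with the fact that non-dividing over the extension base $\Gamma(A)$ upgrades to non-forking. The open-versus-closed ball bookkeeping in the last paragraph is routine but must be carried out with some care, distinguishing whether $\bar A$ has a largest element.
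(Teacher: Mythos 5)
Your overall skeleton matches the paper's: build a realization $\gamma$ of $\tp(val(c-b)/\Gamma(A))$ that lies below the relevant cut and is non-forking over $\Gamma(A)$, transfer to $\mathcal{M}$ via \ref{dominationValeurCorpique} and strong homogeneity, and set $c'=\sigma(c)$. The final verification that $c'$ is $B$-generic of $\mathfrak{B}$ is also correct (the paper leaves it implicit). But the central step is where you diverge from the paper, and that is where there is a real gap.

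You need to show that the partial type $\tp(\gamma_0/\Gamma(A))\cup\{x<\delta\mid\delta\in\bar B\}$ is consistent and does not divide over $\Gamma(A)$ in $\Gamma(\mathcal{M})$. You assert that ``both facts follow as in \ref{immediat}'' and that a ``careful saturation argument'' lets one realise any finite fragment of $\tp(\gamma_0/\Gamma(A))$ below any $\delta>\bar A$. This is not what \ref{immediat} shows: there only the \emph{cut} $\{\,]\gamma,\delta[\,\}$ is handled via Remark \ref{coupure}, not the full first-order type over $\Gamma(A)$. The full type can carry much more than the cut (congruences, and in general ordered Abelian groups further data), and it is by no means a formal consequence of ``$\bar A$ has no largest element or $\Gamma$ is dense'' that every such type can be realised below every $\delta>\bar A$. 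You flag this yourself as ``the main obstacle,'' but you do not resolve it, and the purely ordered-group argument you gesture at does not obviously go through.

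The paper resolves precisely this point by \emph{not} working inside $\Gamma$ alone. It invokes Proposition \ref{consistanceGenerique}: using that $c$ is $A$-generic of $\mathfrak{B}$ and $A=\acl^{eq}(K(A))$, one finds, inside a model $B'$ containing $B\cup\bar B'$, an element $d\equiv_A c$ that is $B'$-generic of $\mathfrak{B}$. Then $val(d-b)$ is automatically a realisation of $\tp(\gamma_0/\Gamma(A))$ (since $b\in K(A)$) that lies in $Y=\bigcap_n\sigma_n(X)$, giving consistency and non-dividing in one stroke. This witness is produced at the level of the valued field and then pushed down to $\Gamma$; the ordered-group structure alone is not enough. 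To fix your proof, replace the saturation argument in step 3 by this application of \ref{consistanceGenerique}; after that, your steps coincide with the paper's and the argument closes.
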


\begin{proof}
Let $\bar{A}$, $\bar{B}$, $X$, $(\sigma_n)_n$, $\bar{B}'$, $Y$ be as in the proof of the previous lemma. Let $B'$ be a small $|B|^+$-saturated elementary substructure of $\mathcal{M}$ containing $B\cup\bar{B}'$. By \ref{consistanceGenerique}, there exists $d\equiv_A c$ so that $d$ is $B'$-generic of $\mathfrak{B}$. The value $val(d-b)$ not only is a point of $Y$, put it is also a $\Gamma(A)$-conjugate of $val(c-b)$. As a result, the partial type (of the ordered group) $\tp(val(c-b)/\Gamma(A))\cup\{x\in X\}$ does not divide, and hence does not fork over $\Gamma(A)$. Let $\gamma\in\Gamma(\mathcal{M})$ be a realization of this type for which $\indep{\gamma}{\Gamma(A)}{\Gamma(B)}{f}$. We have $\gamma\equiv_{\Gamma(A)} val(c-b)$, so $\gamma\equiv_A val(c-b)$ by \ref{dominationValeurCorpique}. Let $\sigma\in Aut(\mathcal{M}/A)$ so that $\sigma(val(c-b))=\gamma$. As $b\in K(A)$, $\sigma(b)=b$, so $c'=\sigma(c)$ witnesses the lemma.
\end{proof}

We can finally prove our main results:

\begin{theorem}\label{main1}
Let $A$ be a small subset of $K(\mathcal{M})$. Suppose the following conditions hold:
\begin{itemize}
\item Every subset of $\Gamma(\mathcal{M})$ is an extension base with respect to the ordered group structure.
\item $\Gamma(\mathcal{M})$ is dense.
\item Every subset of $k(\mathcal{M})$ is an extension base with respect to the field structure.
\item For every $N<\omega$, $[k^* : (k^*)^N]$ is finite.
\item For every $\gamma\in\Gamma(\dcl(\emptyset))$, there exists an $a\in K(\acl(A))$ so that $val(a)=\gamma$.
\item For every $N>0$, $\alpha\in\faktor{k^*}{(k^*)^N}(\mathcal{M})$, there exists $a\in \mathcal{O}^*_{\acl(A)}$ so that \\$res(a)\mod (k^*)^N=\alpha$.
\end{itemize}
Then $A$ is an extension base.
\end{theorem}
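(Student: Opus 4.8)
The plan is to reduce via Corollary~\ref{reducK2} to unary types and then split into cases according to the chain of balls around the point. Let $\mathfrak{C}$ be the class of all small subsets of $K(\mathcal{M})$ satisfying the hypotheses of the theorem; since enlarging $\acl(A)$ only weakens the last two hypotheses, $\mathfrak{C}$ is closed under adjoining a singleton of $K(\mathcal{M})$, so by \ref{reducK2} it suffices to show $\indep{c}{A}{A}{f}$ for every $A\in\mathfrak{C}$ and every singleton $c\in K(\mathcal{M})$. By \ref{forkingEZ}(1) and the contrapositive of \ref{rightAcl} this is equivalent to $\indep{c}{\acl^{eq}(A)}{\acl^{eq}(A)}{f}$, and $\acl^{eq}(A)=\acl^{eq}(K(A))$ again lies in $\mathfrak{C}$, so we may assume $A=\acl^{eq}(K(A))$. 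If $c\in K(A)$ then $\tp(c/A)$ is algebraic and does not fork over $A$, so assume $c\notin K(A)$. By \ref{existencePasExtension} we may also assume $ac,s$ exist in $\mathcal{M}$, and we fix a $(2^{|A|})^{++}$-saturated, strongly-$(2^{|A|})^{++}$-homogeneous small $B\preceq\mathcal{M}^{eq}$ containing $A$. It then suffices to produce $c'\equiv_A c$ with $\indep{c'}{A}{B}{f}$: right monotonicity of forking gives $\indep{c'}{A}{A}{f}$, hence $\indep{c}{A}{A}{f}$.

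Let $\mathfrak{B}$ be the chain of $A$-balls containing $c$, so $c$ is $A$-generic of $\mathfrak{B}$. Suppose first $\mathfrak{B}$ is non-residual. If $\mathfrak{B}$ is immediate, pick any $b\in K(B)\cap(\cap\mathfrak{B})$ (it exists by saturation of $B$); since $\Gamma(A)$ is an extension base in the ordered group $\Gamma(\mathcal{M})$, Lemma~\ref{immediat} yields $c'\in K(\mathcal{M})$ that is $B$-generic of $\mathfrak{B}$ with $\indep{val(c'-b)}{\Gamma(A)}{\Gamma(B)}{f}$, and one checks $c'\equiv_A c$ by comparing $c$ and $c'$ through the chain $\mathfrak{B}'$ of \emph{pointed} $A$-balls containing $c$: if $\cap\mathfrak{B}'=\cap\mathfrak{B}$ then \ref{zetaBall} (first bullet) applies, while if $\cap\mathfrak{B}'\supsetneq\cap\mathfrak{B}$ then \ref{fortVSFaible} shows $\mathfrak{B}'$ is residual with pointed least ball $Z'$ and $\cap\mathfrak{B}$ is the largest open ball $Y\ni c$ strictly inside $Z'$, so for $a\in K(A)\cap(Z'\setminus Y)$ we get $rv(c'-a)=rv(Y-a)=rv(c-a)$ and \ref{zetaBall} (second bullet) applies. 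If instead $\mathfrak{B}$ is ramified, then $\cap\mathfrak{B}$ has a point $b\in K(A)$ (the corollary right after \ref{relevementCentre}), and Lemma~\ref{ramifieReel} directly yields $c'\equiv_A c$ that is $B$-generic of $\mathfrak{B}$ with $\indep{val(c'-b)}{\Gamma(A)}{\Gamma(B)}{f}$. In both subcases, since $[k^*:(k^*)^N]$ is finite for all $N$, Lemma~\ref{indepGamma} gives $\indep{c'}{A}{B}{f}$.

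Now suppose $\mathfrak{B}$ is residual, with least ball the closed $A$-ball $Z$; then $c$ is $A$-generic of $\{Z\}$. As $\Gamma(\mathcal{M})$ is dense and $A=\acl^{eq}(K(A))$, Corollary~\ref{relevementCentre} gives $b\in Z\cap K(A)$, and Proposition~\ref{relevementRayon2} (whose hypotheses are among those of the theorem) gives $b'\in K(A)$ with $val(b')=rad(Z)$. Genericity forces $val(c-b)=rad(Z)=val(b')$, so $\rho_0:=res\big(\frac{c-b}{b'}\big)\in k^*$, and $\rho_0$ is not algebraic over $k(A)$ in the field $k(\mathcal{M})$ (otherwise, by stable embeddedness of $k$, the ball $\{x\in Z\ :\ res(\frac{x-b}{b'})=\rho_0\}$ would be an $A$-ball strictly inside $Z$ containing $c$, against residuality). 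Since every subset of $k(\mathcal{M})$ is an extension base, $\tp(\rho_0/k(A))$ does not fork over $k(A)$; extending it over $k(B)$ without forking (\ref{bonnesExtensions}) and letting $\rho'\in k(\mathcal{M})$ realize the extension gives $\rho'\equiv_{k(A)}\rho_0$ and $\indep{\rho'}{k(A)}{k(B)}{f}$ in $k(\mathcal{M})$. The hypotheses on $[k^*:(k^*)^N]$ and on $\mathcal{O}^*_{\acl(A)}$ are exactly those needed to apply Lemma~\ref{stPlsyntaxique}, so Corollary~\ref{dominationResiduCorpique} gives $\rho'\equiv_A\rho_0$ in $\mathcal{M}$; choosing $\sigma\in Aut(\mathcal{M}/A)$ with $\sigma(\rho_0)=\rho'$ and setting $c':=\sigma(c)$, we obtain $c'\equiv_A c$, $c'$ is $A$-generic of $\{Z\}$, and $res\big(\frac{c'-b}{b'}\big)=\rho'$ (as $res$ is $\emptyset$-definable and $b,b'\in K(A)$). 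Since $\rho'\notin\acl(k(A))$ in $k$ and $\tp(\rho'/k(B))$ does not fork over $k(A)$, forking cannot create algebraicity, so $\rho'\notin\acl(k(B))$, hence $\rho'\notin res(K(B))$; combined with $val(c'-b)=val(b')$ this shows $c'$ avoids every $B$-ball strictly contained in $Z$, i.e. $c'$ is $B$-generic of $\{Z\}$. Lemma~\ref{indepKReel} then gives $\indep{c'}{A}{B}{f}$.

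In every case we produced $c'\equiv_A c$ with $\indep{c'}{A}{B}{f}$, which completes the proof. I expect the residual case to be the main obstacle: there $\tp(c/A)$ carries a nontrivial coordinate in the residue field which must be made generic over $k(B)$, and transporting the freely chosen generic residue $\rho'$ back to a realization of $\tp(c/A)$ (and checking it stays $B$-generic of $\{Z\}$) relies on the strong stable embeddedness of the residue field --- this is exactly why the theorem assumes $[k^*:(k^*)^N]$ finite together with the lifting condition on $\mathcal{O}^*_{\acl(A)}$ (feeding Lemma~\ref{stPlsyntaxique} and Corollary~\ref{dominationResiduCorpique}), and assumes $\Gamma(\mathcal{M})$ dense (so that closed $A$-balls are pointed, Corollary~\ref{relevementCentre}). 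The remaining care is the bookkeeping in the immediate case distinguishing genericity over $A$ from genericity over the chain of pointed $A$-balls, handled by \ref{fortVSFaible} and \ref{zetaBall}.
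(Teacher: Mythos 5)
Your proof is correct and follows essentially the same route as the paper: reduce to unary types via Corollary \ref{reducK2}, pass to $A=\acl^{eq}(K(A))$, and split on whether the chain of $A$-balls is immediate, ramified, or residual, invoking \ref{immediat}/\ref{ramifieReel} together with \ref{indepGamma} in the first two cases and \ref{relevementCentre}, \ref{relevementRayon2}, \ref{bonnesExtensions}, \ref{dominationResiduCorpique}, \ref{indepKReel} in the residual case. The only substantive difference is that you explicitly verify that the realization $c'$ produced in the residual case is $B$-generic of $\{Z\}$ (via the observation that a non-forking extension of a non-algebraic type in $k$ stays non-algebraic, hence $\rho'\notin k(B)$), a step the paper leaves implicit; that verification is correct and a welcome addition.
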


\begin{proof}
Let $\mathfrak{C}$ be the class of every small subsets $A'$ of $K(\mathcal{M})$ satisfying the two last points of the theorem, and such that $\Gamma(A')$ is an extension base in $\Gamma(\mathcal{M})$. Clearly, for all $A'\in\mathfrak{C}$, for all $c\in K(\mathcal{M})$, we have $A'c\in\mathfrak{C}$.\footnote{There is a subtlety here. The $\faktor{k^*}{(k^*)^N}$ are fixed. If they were infinite (hence unbounded), maybe adding $c$ to $A'$ would add new classes of $(k^*)^N$ that cannot be pulled back to $K(A'c)$! So the finiteness hypothesis is crucial for the induction argument of \ref{reducK2}.} By \ref{reducK2}, we just have to show that $\indep{c}{A}{A}{f}$ for every singleton $c\in K(\mathcal{M})$.
\par Let $c\in K(\mathcal{M})$. As $\indep{c}{A}{A}{f}\Longleftrightarrow\indep{c}{\acl^{eq}(A)}{\acl^{eq}(A)}{f}$, we can replace $A$ by $\acl^{eq}(A)$. Let $B$ be a $(2^{|A|})^{++}$-saturated, strongly-$(2^{|A|})^{++}$-homogeneous small elementary substructure of $\mathcal{M}^{eq}$ that contains $A$. We have to find $c'\equiv_A c$ for which $\indep{c'}{A}{B}{f}$. Let $\mathfrak{B}$ be the set of every $A$-ball containing $c$.
\begin{itemize}
\item Suppose $\mathfrak{B}$ is $A$-immediate. Let $c'$ witness Lemma \ref{immediat}. We have $\indep{c'}{A}{B}{f}$ by \ref{indepGamma}. Let us show $c\equiv_A c'$. Let $\mathfrak{B}'$ be the chain of every pointed $A$-ball that contains $c$. If $\cap\mathfrak{B}'$ has no point in $K(A)$, then we have $c\equiv_A c'$ by \ref{zetaBall}, as $c'$ is clearly in $\cap\mathfrak{B}'$. Else, if $a\in K(A)$ is in $\cap\mathfrak{B}'$, then $\mathfrak{B}'$ is not $A$-immediate, so $\cap\mathfrak{B}'\neq\cap\mathfrak{B}$, and $c$ is not $A$-generic of $\mathfrak{B}'$. By \ref{fortVSFaible}, $\mathfrak{B}$ has a least element $Y$, an open ball which does not contain $a$. As $c$ and $c'$ are both in $Y$, we clearly have $rv(c-a)=rv(c'-a)$, which also implies $c\equiv_A c'$ by \ref{zetaBall}.
\item Suppose $\mathfrak{B}$ is $A$-ramified, ie $\cap\mathfrak{B}$ has a point $b\in K(A)$. Let $c'$ witness Lemma \ref{ramifieReel} applied to $b$. We have $\indep{c'}{A}{B}{f}$ by \ref{indepGamma}.
\item Suppose $\mathfrak{B}$ is residual, let $X=min(\mathfrak{B})$. Then $X$ is pointed by \ref{relevementCentre}. Let $b\in K(A)$ so that $b\in X$. By \ref{relevementRayon2}, there also exists $b'\in K(A)$ so that $val(b')=rad(X)$. As $k(A)$ is an extension base in the residue field, we can use \ref{bonnesExtensions} to find $\alpha\equiv_{k(A)}res\left(\frac{c-b}{b'}\right)$ so that $\indep{\alpha}{k(A)}{k(B)}{f}$. By \ref{dominationResiduCorpique}, we have $\alpha\equiv_A res\left(\frac{c-b}{b'}\right)$, so there exists $c'\equiv_A c$ for which $\indep{res\left(\frac{c'-b}{b'}\right)}{k(A)}{k(B)}{f}$. By \ref{indepKReel}, we have $\indep{c'}{A}{B}{f}$.\qedhere
\end{itemize}
\end{proof}

\begin{theorem}\label{main1Bis}
The above theorem also holds if we replace the hypothesis on the density of $\Gamma$ by the following condition:
\par In the pure field $k(\mathcal{M})$, for any subfield $B$ for which $B^{alg}\cap k(\mathcal{M})=B$, we have $B=\acl(B)$ in the theory of the field $k(\mathcal{M})$.
\end{theorem}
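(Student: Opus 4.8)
The plan is to re-run the proof of Theorem \ref{main1} line by line, tracking precisely where the density of $\Gamma(\mathcal{M})$ was used and replacing that one step. Recall the structure of that argument: via \ref{reducK2} one reduces to showing $\indep{c}{A}{A}{f}$ for a singleton $c\in K(\mathcal{M})$, replaces $A$ by $\acl^{eq}(A)$ (so that $A=\acl^{eq}(K(A))$; here $K(\acl^{eq}(A))=K(\acl(A))=A^{alg}\cap K(\mathcal{M})$ by the corollary to \ref{flenner}), fixes a sufficiently saturated and strongly homogeneous small $B\succ A$, and splits into three cases according to the nature of $\mathfrak{B}$, the chain of $A$-balls containing $c$: immediate, ramified, residual. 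The immediate case runs through \ref{immediat}, \ref{indepGamma}, \ref{zetaBall}, \ref{fortVSFaible}; the ramified case runs through the corollary to \ref{fortVSFaible} stating that a ramified chain has a point in $K(A)$ (proved there without any density hypothesis, the discrete case being handled separately by taking the least closed ball strictly containing a given subball), followed by \ref{ramifieReel} and \ref{indepGamma}. None of these lemmas assumes $\Gamma(\mathcal{M})$ dense, so both cases go through verbatim under the hypotheses of Theorem \ref{main1Bis}.

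The only genuine use of density is in the residual case, where $X=\min(\mathfrak{B})$ is a closed $A$-ball and one applies Corollary \ref{relevementCentre} to conclude that $X$ is pointed. I would replace this by Corollary \ref{relevementCentre2}, which yields the same conclusion (indeed: every $A$-ball is pointed) without assuming density, at the cost of the hypotheses inherited from \ref{relevementRV} and \ref{stPlsyntaxique}. The task is therefore to check that these hold after the reduction. We have $A=\acl^{eq}(K(A))$ by construction; $[k^*:(k^*)^N]$ is finite by assumption; the two pullback conditions retained from \ref{main1} are exactly the $(k^*)^N$-pullback condition of \ref{stPlsyntaxique} and the $\Gamma(\dcl(\emptyset))$-pullback condition of \ref{relevementRV} (using that $K(\acl(A))=K(A)$ and $\mathcal{O}^*_{\acl(A)}\subset\mathcal{O}_A$ for the new $A$); and the hypothesis newly added in Theorem \ref{main1Bis} is literally the last hypothesis of \ref{relevementRV}. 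Hence \ref{relevementCentre2} applies and $X$ is pointed. The remainder of the residual case — picking $b\in K(A)\cap X$, obtaining $b'\in K(A)$ with $val(b')=rad(X)$ from \ref{relevementRayon2}, using that $k(A)$ is an extension base in $k(\mathcal{M})$ together with \ref{bonnesExtensions} and \ref{dominationResiduCorpique} to produce $c'\equiv_A c$ with $\indep{res\left(\frac{c'-b}{b'}\right)}{k(A)}{k(B)}{f}$, and concluding via \ref{indepKReel} — uses no density and is copied unchanged.

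I do not expect a serious obstacle: the content is the bookkeeping just described, plus a careful second pass to confirm that no auxiliary result invoked in the immediate or ramified cases silently requires $\Gamma(\mathcal{M})$ dense. The one place worth double-checking is the corollary to \ref{fortVSFaible} on ramified chains, whose proof does branch on density — but it disposes of the non-dense (discrete) case on its own, so it is available to us exactly as stated. With these verifications in place, the three cases close precisely as in the proof of Theorem \ref{main1}, and $A$ is an extension base.
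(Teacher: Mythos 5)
Your proposal is correct and follows the same route as the paper: the only point in the proof of Theorem \ref{main1} that invokes density of $\Gamma(\mathcal{M})$ is the residual case, where $\min(\mathfrak{B})$ must be shown to be pointed, and replacing \ref{relevementCentre} by \ref{relevementCentre2} (whose extra hypotheses are exactly the $(k^*)^N$- and $\Gamma(\dcl(\emptyset))$-pullback conditions retained from \ref{main1} together with the new algebraic-closure condition on subfields of $k(\mathcal{M})$) closes the gap. Your careful verification that the immediate and ramified cases do not secretly use density, and that the hypotheses of \ref{relevementCentre2} are indeed available after replacing $A$ by $\acl^{eq}(A)$, is a correct expansion of the paper's terse one-line proof.
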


\begin{proof}
The only difference in the proof is in the residual case, where we have to use \ref{relevementCentre2} instead of \ref{relevementCentre} to show that $X$ is pointed.
\end{proof}

\begin{theorem}\label{main2}
Let $A$ be a small subset of $K(\mathcal{M})$. Suppose the following conditions hold:
\begin{itemize}
\item Every subset of $\Gamma(\mathcal{M})$ is an extension base with respect to the ordered group structure.
\item $\indep{}{}{}{f}=\indep{}{}{}{alg}$ in the field $k(\mathcal{M})$.
\item For every $N<\omega$, $[k^* : (k^*)^N]$ is finite.
\end{itemize}
Then $A$ is an extension base.
\end{theorem}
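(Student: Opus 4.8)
The plan is to run the argument of Theorem~\ref{main1} essentially verbatim; the only point is that, once $\indep{}{}{}{f}=\indep{}{}{}{alg}$ holds in $k(\mathcal{M})$, the residual case becomes trivial, so the extra hypotheses of Theorem~\ref{main1} (density of $\Gamma(\mathcal{M})$, and the pulling back of values and $(k^*)^N$-cosets to $K(\acl(A))$) are no longer needed. First I would invoke Corollary~\ref{reducK2} with $\mathfrak{C}$ the class of \emph{all} small subsets of $K(\mathcal{M})$: this class is obviously closed under adjoining a singleton from $K(\mathcal{M})$, and since every subset of $\Gamma(\mathcal{M})$ is an extension base no further bookkeeping (unlike the footnote in Theorem~\ref{main1}) is required. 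So it suffices to prove $\indep{c}{A}{A}{f}$ for each singleton $c\in K(\mathcal{M})$. By Fact~\ref{forkingEZ}.(1) I may replace $A$ by $\acl^{eq}(A)$; as $A\subseteq K(\mathcal{M})$, the corollary to Proposition~\ref{flenner} then gives $A=\acl^{eq}(K(A))$. Fix a $(2^{|A|})^{++}$-saturated, strongly-$(2^{|A|})^{++}$-homogeneous small elementary substructure $B$ of $\mathcal{M}^{eq}$ containing $A$; I must produce $c'\equiv_A c$ with $\indep{c'}{A}{B}{f}$. Let $\mathfrak{B}$ be the chain of all $A$-balls containing $c$, so that $c$ is $A$-generic of $\mathfrak{B}$, and distinguish the cases immediate / ramified / residual.

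In the immediate and ramified cases I would copy the proof of Theorem~\ref{main1}. If $\mathfrak{B}$ is $A$-immediate, saturation of $B$ yields some $b\in K(B)\cap\left(\cap\mathfrak{B}\right)$; Lemma~\ref{immediat} (using that $\Gamma(A)$ is an extension base in $\Gamma(\mathcal{M})$) gives $c'$ that is $B$-generic of $\mathfrak{B}$ with $\indep{val(c'-b)}{\Gamma(A)}{\Gamma(B)}{f}$, whence $\indep{c'}{A}{B}{f}$ by Lemma~\ref{indepGamma} (this is the one place where $[k^*:(k^*)^N]<\infty$ is used), and $c'\equiv_A c$ follows from Propositions~\ref{zetaBall} and~\ref{fortVSFaible} exactly as in Theorem~\ref{main1}. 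If $\mathfrak{B}$ is $A$-ramified, then by the corollaries to Proposition~\ref{fortVSFaible} the set $\cap\mathfrak{B}$ has a point $b\in K(A)$; Lemma~\ref{ramifieReel} produces $c'\equiv_A c$ that is $B$-generic of $\mathfrak{B}$ with $\indep{val(c'-b)}{\Gamma(A)}{\Gamma(B)}{f}$, and Lemma~\ref{indepGamma} finishes the case.

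The residual case is where the proof genuinely shortens. Let $Z=\min(\mathfrak{B})$, a closed $A$-ball, so $c$ is $A$-generic of $\{Z\}$. By Proposition~\ref{consistanceGenerique} there is $c'\equiv_A c$ that is $B$-generic of $\{Z\}$, and since $\indep{}{}{}{f}=\indep{}{}{}{alg}$ in the pure field $k(\mathcal{M})$, Lemma~\ref{indepKImaginaire} applies directly to give $\indep{c'}{A}{B}{f}$. Note that this argument uses neither pointedness of $Z$ (so density of $\Gamma(\mathcal{M})$ is not needed) nor that $A$ is generated by field elements. This exhausts the trichotomy, so $A$ is an extension base.

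Since all of the real content is already packaged into the lemmas of this section, I do not anticipate a serious obstacle; the only step demanding a little attention is the verification $c'\equiv_A c$ in the immediate case, and that is literally the argument already written out in the proof of Theorem~\ref{main1}.
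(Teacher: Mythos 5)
Your proposal matches the paper's proof essentially verbatim: take $\mathfrak{C}$ to be all small subsets of $K(\mathcal{M})$, reduce to singletons via Corollary~\ref{reducK2}, replace $A$ by $\acl^{eq}(A)$, and then run the trichotomy of Theorem~\ref{main1}, modifying only the residual case by using Proposition~\ref{consistanceGenerique} followed by Lemma~\ref{indepKImaginaire} in place of the pointedness/lifting machinery. Your observation that the residual case no longer needs pointedness of $Z$ or field generation of $A$, which is precisely why the density and pullback hypotheses of Theorem~\ref{main1} can be dropped, correctly identifies the essential simplification.
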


\begin{proof}
We do the same proof as above with $\mathfrak{C}$ the class of every small subset of $K(\mathcal{M})$. The only case where we have to act differently is the residual case. Here, we use \ref{consistanceGenerique} to find $c'\equiv_A c$ which is $B$-generic of $\mathfrak{B}$. Then $\indep{c'}{A}{B}{f}$ by \ref{indepKImaginaire}.
\end{proof}

\begin{theorem}\label{main3}
Let $A$ be a small subset of $\mathcal{M}^{eq}$. Suppose the following conditions hold:
\begin{itemize}
\item $\Gamma(\mathcal{M})\equiv \mathbb{Z}$.
\item $\indep{}{}{}{f}=\indep{}{}{}{alg}$ in the field $k(\mathcal{M})$.
\item For every $N<\omega$, $[k^* : (k^*)^N]$ is finite.
\end{itemize}
Then $A$ is an extension base.
\end{theorem}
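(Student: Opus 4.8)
The plan is to adapt the proof of Theorem \ref{main2} to imaginary parameters, replacing the appeals to Propositions \ref{zetaBall} and \ref{fortVSFaible} (which require $A=\acl^{eq}(K(A))$) by appeals to Proposition \ref{zetaBall2}, which is available precisely because $\Gamma(\mathcal{M})\equiv\mathbb{Z}$. By Corollary \ref{reducK2} applied to the class $\mathfrak{C}$ of all small subsets of $\mathcal{M}^{eq}$ (clearly closed under adjoining singletons of $K(\mathcal{M})$), it suffices to show $\indep{c}{A}{A}{f}$ for every singleton $c\in K(\mathcal{M})$. Replacing $A$ by $\acl^{eq}(A)$, fix a $(2^{|A|})^{++}$-saturated, strongly-$(2^{|A|})^{++}$-homogeneous small elementary substructure $B\supseteq A$ of $\mathcal{M}^{eq}$; we must produce $c'\equiv_A c$ with $\indep{c'}{A}{B}{f}$. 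Let $\mathfrak{B}$ be the chain of $A$-balls containing $c$, so that $c$ is $A$-generic of $\mathfrak{B}$, and we distinguish the residual / immediate / ramified cases, just as in Theorems \ref{main1} and \ref{main2}.

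In the residual case, $\mathfrak{B}$ has a least element $Z$, a closed $A$-ball, and we argue exactly as in Theorem \ref{main2}: Proposition \ref{consistanceGenerique} provides $c'\equiv_A c$ that is $B$-generic of $\mathfrak{B}$, hence of $\{Z\}$, and Lemma \ref{indepKImaginaire} (using $\indep{}{}{}{f}=\indep{}{}{}{alg}$ in $k(\mathcal{M})$) gives $\indep{c'}{A}{B}{f}$.

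In the immediate case, since $\Gamma(\mathcal{M})\equiv\mathbb{Z}$ every subset of $\Gamma(\mathcal{M})$ is an extension base, so the hypotheses of Lemma \ref{immediat} hold; choosing some $b\in K(B)\cap(\cap\mathfrak{B})$ (which exists because $\cap\mathfrak{B}$ is a nonempty $B$-type-definable set and $B$ is sufficiently saturated), Lemma \ref{immediat} yields $c'\in K(\mathcal{M})$ that is $B$-generic of $\mathfrak{B}$ with $\indep{val(c'-b)}{\Gamma(A)}{\Gamma(B)}{f}$ in $\Gamma(\mathcal{M})$, whence $\indep{c'}{A}{B}{f}$ by Lemma \ref{indepGamma} (using that $[k^*:(k^*)^N]$ is finite). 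It remains to check $c\equiv_A c'$: being $B$-generic, $c'$ is $A$-generic of $\mathfrak{B}$, so the chain of $A$-balls containing $c'$ is again $\mathfrak{B}$, which is $A$-immediate, and since also $c\in\cap\mathfrak{B}$, Proposition \ref{zetaBall2} gives $c\equiv_A c'$. This is the step where Propositions \ref{zetaBall} and \ref{fortVSFaible} of Theorem \ref{main1} are replaced, and where the hypothesis on $\Gamma$ first enters.

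The main obstacle is the ramified case, where $\cap\mathfrak{B}$ strictly contains an $A$-ball or a point of $K(A)$ but $\mathfrak{B}$ has no least element. When $\cap\mathfrak{B}$ strictly contains an $A$-ball $X$ (necessarily with $c\notin X$), one first uses the discreteness of $\Gamma(\mathcal{M})$ to pass to the successor ball of $X$, iterating so as either to reach an $A$-ball with a point in $K(A)$, reducing to the subcase below, or to land in a configuration governed by the immediate-case analysis applied to a coarser chain. When $\cap\mathfrak{B}$ has a point $b\in K(A)$, translate so that $b=0\in K(A)\cap(\cap\mathfrak{B})$; we then want $c'\equiv_A c$ that is $B$-generic of $\mathfrak{B}$ with $\indep{val(c')}{\Gamma(A)}{\Gamma(B)}{f}$, after which $\indep{c'}{A}{B}{f}$ follows from Lemma \ref{indepGamma}. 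As in the proof of Lemma \ref{ramifieReel}, one applies Proposition \ref{consistanceGenerique} to see that $\tp_\Gamma(val(c)/\Gamma(A))$ together with membership in the relevant cut does not fork over $\Gamma(A)$, and then (using that $\Gamma(A)$ is an extension base in $\Gamma(\mathcal{M})\equiv\mathbb{Z}$) extracts a realization $\gamma$ non-forking over $\Gamma(B)$ with $\gamma\equiv_{\Gamma(A)}val(c)$. The delicate point, and the reason $\Gamma(\mathcal{M})\equiv\mathbb{Z}$ is essential, is to pass from $\gamma\equiv_{\Gamma(A)}val(c)$ in $\Gamma(\mathcal{M})$ to $\gamma\equiv_A val(c)$ in $\mathcal{M}$: for imaginary $A$, Corollary \ref{dominationValeurCorpique} is not available, so one instead invokes the classification of unary types over imaginary parameters in the $\mathbb{Z}$-group setting (Proposition \ref{zetaBall2} being one instance, the others coming from \cite{zeta}), which shows that over a $\mathbb{Z}$-group the value group is rigid enough for this transfer. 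Once $\gamma\equiv_A val(c)$ is known, an automorphism of $\mathcal{M}$ over $A$ sending $val(c)$ to $\gamma$ (and fixing $b=0$) produces the desired $c'$, which completes the proof.
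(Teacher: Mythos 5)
Your residual and immediate cases match the paper's, including the replacement of Proposition \ref{zetaBall} and Proposition \ref{fortVSFaible} by Proposition \ref{zetaBall2}. The genuine gap is in the ramified case. You try to follow the route of Lemma \ref{ramifieReel}: first build $\gamma\in\Gamma(\mathcal{M})$ with $\gamma\equiv_{\Gamma(A)}val(c)$ and $\indep{\gamma}{\Gamma(A)}{\Gamma(B)}{f}$, then transfer $\gamma\equiv_{\Gamma(A)}val(c)$ up to $\gamma\equiv_A val(c)$, then pull back to a field element via an automorphism over $A$. But the transfer step is exactly where Lemma \ref{ramifieReel} invokes Corollary \ref{dominationValeurCorpique}, which requires $A=\acl(K(A))$. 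You correctly flag that this is unavailable for imaginary $A$, but then you fill the hole with an appeal to a "classification of unary types over imaginary parameters in the $\mathbb{Z}$-group setting," which the paper does not contain: Proposition \ref{zetaBall2} concerns $K$-types over imaginaries, not $\Gamma$-types, and a strong stable embeddedness statement for $\Gamma$ over imaginary parameters is never established. Your preliminary reduction via "passing to the successor ball of $X$, iterating" to land in another case is also too vague to count as a proof.

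The paper avoids the transfer entirely by reversing the order of operations. It first applies Proposition \ref{consistanceGenerique} to obtain $c'\equiv_A c$ that is $B$-generic of $\mathfrak{B}$ (so $c'\equiv_A c$ comes for free, with no need to go back from $\Gamma$ to $\mathcal{M}$), and only then, picking $b\in K(B)\cap X$, observes that $val(c'-b)$ sits in the cut between $\{rad(Z):Z\in\mathfrak{B}\}\subset\Gamma(A)$ and the elements of $\Gamma(B)$ above it, a cut with no realization in $\dcl(\Gamma(B))$ precisely because $c'$ is $B$-generic. The second clause of Remark \ref{coupure} — which is where the hypothesis $\Gamma\equiv\mathbb{Z}$ is actually used — then gives $\indep{val(c'-b)}{\Gamma(A)}{\Gamma(B)}{f}$ directly, and Lemma \ref{indepGamma} closes the case. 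This handles uniformly both the subcase where $\cap\mathfrak{B}$ strictly contains an $A$-ball and the one where it merely contains a point of $K(A)$, making your case split unnecessary.
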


\begin{proof}
This time $\mathfrak{C}$ is the class of every small subset of $\mathcal{M}^{eq}$. We have the same disjunction of three cases:
\begin{itemize}
\item $\mathfrak{B}$ is $A$-immediate. We proceed just like the proof of \ref{main1}, except that we have to use \ref{zetaBall2} instead of \ref{zetaBall} to prove $c'\equiv_A c$.
\item $\mathfrak{B}$ is $A$-ramified, ie $\cap\mathfrak{B}$ contains an $A$-ball $X$. By \ref{consistanceGenerique}, let $c'\equiv_A c$ a $B$-generic point of $\mathfrak{B}$. By genericity of $c'$, we can apply \ref{coupure} to show that $\indep{val(c-X)}{\Gamma(A)}{\Gamma(B)}{f}$ in the ordered group $\Gamma(\mathcal{M})$. Then we can apply \ref{indepGamma} (with $b$ any point in $K(B)\cap X$) to get $\indep{c}{A}{B}{f}$.
\item $\mathfrak{B}$ is residual. This time, we proceed like the proof of \ref{main2}.\qedhere
\end{itemize}
\end{proof}

\tableofcontents
\bibliographystyle{plain}
\bibliography{biblio}

\end{document}